\numberwithin{equation}{section}
\newcommand{\di}{\,\mathrm{d}}
\newcommand{\R}{\mathbb{R}}
\theoremstyle{plain}
\newtheorem{theorem}{Theorem}[section]
\newtheorem{corollary}[theorem]{Corollary}
\newtheorem{proposition}[theorem]{Proposition}
\newtheorem{lemma}[theorem]{Lemma}
\theoremstyle{definition}
\newtheorem{definition}[theorem]{Definition}
\newtheorem{remark}[theorem]{Remark}
\begin{document}
	
\title[A geometrical approach to the sharp fractional Hardy inequality]{A geometrical approach to the sharp Hardy inequality in Sobolev--Slobodecki\u{\i} spaces}

\author[F.~Bianchi]{Francesca Bianchi}
\address[F.~Bianchi]{Dipartimento di Scienze Matematiche, Fisiche e Informatiche, Università degli Studi di Parma, Parco Area delle Scienze 53/A (Campus Scienze e Tecnologie) 43124 Parma (PR), Italy}
\email{francesca.bianchi@unipr.it}

\author[G.~Stefani]{Giorgio Stefani}
\address[G.~Stefani]{Dipartimento di
Matematica ``Tullio Levi-Civita'', Università degli Studi di Padova, Via Trieste 63, 35121 Padova (PD), Italy}
\email{giorgio.stefani@unipd.it}

\author[A.~C.~Zagati]{Anna Chiara Zagati}
\address[A.~C.~Zagati]{Dipartimento di Scienze Matematiche, Fisiche e Informatiche, Università degli Studi di Parma, Parco Area delle Scienze 53/A (Campus Scienze e Tecnologie) 43124 Parma (PR), Italy}
\email{annachiara.zagati@unipr.it}

\date{\today}

\keywords{Hardy inequality, fractional Sobolev spaces, fractional perimeter}

\subjclass[2020]{Primary 49Q20. Secondary 35R11, 46E35}

\thanks{%
\textit{Acknowledgments}.
The authors thank Lorenzo Brasco for several precious comments on a preliminary version of the present work.
The authors are members of the Istituto Nazionale di Alta Matematica (INdAM), Gruppo Nazionale per l'Analisi Matematica, la Probabilità e le loro Applicazioni (GNAMPA).
F.~Bianchi has received funding from INdAM under the INdAM--GNAMPA Project 2024 \textit{Nuove frontiere nella capillarità non locale} (grant agreement No.\ CUP\_E53\-C23\-001\-670\-001).
G.~Stefani has received funding from INdAM under the INdAM--GNAMPA 2023 Project \textit{Problemi variazionali per funzionali e operatori non-locali} (grant agreement No.\ CUP\_E53\-C22\-001\-930\-001), the INdAM--GNAMPA 2024 Project  \textit{Ottimizzazione e disuguaglianze funzionali per problemi geometrico-spettrali locali e non-locali} (grant agreement No.\ CUP\_E53\-C23\-001\-670\-001) and the INdAM--GNAMPA Project 2025 \textit{Metodi
variazionali per problemi dipendenti da operatori frazionari isotropi e
anisotropi} (grant agreement No.\ CUP\_E53\-240\-019\-500\-01), from the European Research Council (ERC) under the European Union’s Horizon 2020 research and innovation program (grant agreement No.~945655), and from the
European Union -- NextGenerationEU and the University of Padua under the 2023
STARS@UNIPD  Starting Grant Project \textit{New Directions in Fractional
Calculus -- NewFrac} (grant agreement No.\ CUP\_C95\-F21\-009\-990\-001).
A.~C.~Zagati has received funding from INdAM under the INdAM--GNAMPA Project 2024 \textit{Proprietà geometriche e regolarità in problemi variazionali locali e non locali} (grant agreement No.\ CUP\_E53\-C23\-001\-670\-001).%
}

\begin{abstract}
We give a partial negative answer to a question left open in a previous work by Brasco and the first and third-named authors concerning the sharp constant in the fractional Hardy inequality on convex sets.
Our approach has a  geometrical flavor and equivalently reformulates the sharp constant in the limit case $p=1$ as the Cheeger constant for the fractional perimeter and the Lebesgue measure with a suitable weight.
As a by-product, we obtain new lower bounds on the sharp constant in the $1$-dimensional case, even for non-convex sets, some of which optimal in the case $p=1$.
\end{abstract}

\maketitle	
	

\section{Introduction}

\subsection{Classical framework}

Given $N\in\mathbb N$ and $p\in(1,\infty)$, the \emph{Hardy inequality} on a (non-empty) open convex set $\Omega\subsetneq\R^N$ states that there exists $C>0$ such that 
\begin{equation}
\label{eq:inthardy}
\int_{\R^N}|\nabla u|^p\di x
\ge
C\int_\Omega\frac{|u|^p}{d_\Omega^p}\di x,
\quad
\text{for all}\
u\in C^\infty_0(\Omega),
\end{equation}
see~\cites{OK90,M11} for a detailed introduction.
Here and in the following, given a (non-empty) open set $\Omega\subseteq\R^N$, we let $C^\infty_0(\Omega)$ be the set of all smooth functions on $\R^N$ with compact support contained in $\Omega$. 
In addition, provided that $\Omega\ne\R^N$, we let $d_\Omega\colon\Omega\to[0,\infty)$,
\begin{equation}
\label{eq:dist_func}
d_\Omega(x)
=
\inf\limits_{y\in\partial\Omega}|x-y|,
\quad
\text{for all}\ 
x\in\Omega,
\end{equation}
be the \emph{distance} function from $\partial\Omega$.

The sharp constant in~\eqref{eq:inthardy}, defined as
\begin{equation}
\label{eq:intvarp}
\mathfrak h_{1,p}(\Omega)
=
\inf_{u\in C^\infty_0(\Omega)}
\left\{
\int_{\R^N}|\nabla u|^p\di x
:
\int_{\Omega}\frac{|u|^p}{d_\Omega^p}\di x=1
\right\}\in[0,\infty),
\end{equation}
can be explicitly computed as
\begin{equation}
\label{eq:intsharp}
\mathfrak h_{1,p}(\Omega)
=
\mathfrak h_{1,p}(\mathbb H^N_+)
=
\left(\frac{p-1}{p}\right)^p,
\end{equation}  
see the original Hardy's works~\cites{H20,H25} for $N=1$ and~\cites{MMP98,MS97} for $N\ge2$.
As well-known, inequality~\eqref{eq:inthardy} cannot hold for $p=1$; that is, 
\begin{equation}
\label{eq:inthardy_no1}
\mathfrak h_{1,1}(\Omega)=\mathfrak h_{1,1}(\mathbb H^N_+)=0
\end{equation}
in~\eqref{eq:intvarp}.
Here and in the following, we set
\begin{equation*}
\mathbb H^1_+=(0,\infty)
\quad
\text{and}
\quad
\mathbb H^N_+=\R^{N-1}\times(0,\infty)
\quad
\text{for}\
N\ge2.
\end{equation*}

Noteworthy, the sharp constant~\eqref{eq:intsharp} is never attained in~\eqref{eq:inthardy}, not even in the corresponding homogeneous Sobolev space $\mathcal W^{1,p}_0(\Omega)$ obtained as the completion of $C^\infty_0(\R^N)$ with respect to the left-hand side of~\eqref{eq:inthardy}. 

\subsection{Fractional framework}

The fractional analog of~\eqref{eq:inthardy} was achieved in~\cite{BC18}*{Th.~1.1}.
Precisely, given $s\in(0,1)$ and $p\in(1,\infty)$, there exists $C>0$ such that 
\begin{equation}
\label{eq:frachardy}
[u]^p_{W^{s,p}(\R^N)}
\ge 
C\int_\Omega\frac{|u|^p}{d_\Omega^{sp}}\di x,
\quad
\text{for all}\
u\in C^\infty_0(\Omega).
\end{equation}
Here and in the rest of the paper, given $s\in(0,1)$, $p\in[1,\infty)$, an open set $A\subset\R^N$, and a measurable function $u\colon A\to[-\infty,\infty]$, we let 
\begin{equation}
\label{eq:frac_seminorm}
[u]_{W^{s,p}(A)}
=
\left(
\int_{A}
\int_{A}
\frac{|u(x)-u(y)|^p}{|x-y|^{N+sp}}
\di x
\di y
\right)^{\frac1p}
\end{equation}
be the \emph{Sobolev--Slobodecki\u{\i} $(s,p)$-fractional seminorm} of~$u$ on~$A$.
The sharp constant in~\eqref{eq:frachardy}, defined analogously as in~\eqref{eq:intvarp} as
\begin{equation}\label{eq:h_sp}
\mathfrak{h}_{s,p}(\Omega)
=
\inf_{u \in C^{\infty}_0(\Omega)} 
\left\{
[u]_{W^{s,p}(\mathbb{R}^N)}^p : \int_{\Omega} \frac{|u|^p}{d_{\Omega}^{sp}} \di x = 1 
\right\}\in[0,\infty),
\end{equation}
was investigated by L.~Brasco and the first and third-named authors in the recent work~\cite{BBZ23}.
In~\cite{BBZ23}*{Main Th.}, it was proved that, for all $p\in(1,\infty)$ and $s\in(0,1)$,
\begin{equation}
\label{eq:sharpfrac_h}
\mathfrak h_{s,p}(\mathbb H^N_+)
=
C_{N,sp}\,\Lambda_{s,p}
\end{equation}
and, in analogy with the classical result~\eqref{eq:intsharp}, if either $p=2$ or $
sp\ge 1$, then
\begin{equation}
\label{eq:sharpfrac}
\mathfrak h_{s,p}(\Omega)=\mathfrak h_{s,p}(\mathbb H^N_+). 
\end{equation}
Moreover, as in the classical case, the sharp constant in~\eqref{eq:sharpfrac} is never attained in~\eqref{eq:frachardy}, not even in the homogeneous fractional Sobolev--Slobodecki\u{\i} space $\mathcal W^{s,p}_0(\Omega)$ obtained as the completion of $C^\infty_0(\Omega)$ with respect to the seminorm~\eqref{eq:frac_seminorm} with $A=\R^N$. 
Here and below, we set
\begin{equation}
\label{eq:C_Nsp}
C_{N,q}=
\begin{cases}
(N-1)\,\omega_{N-1}\displaystyle\int_0^{\infty} \frac{t^{N-2}}{(1+t^2)^{\frac{N+q}{2}}}
\di t
&
\quad
\text{for}\ N\ge2,
\\[1.5ex]
1
&
\quad
\text{for}\ N=1,
\end{cases}
\end{equation}
whenever $q\in[0,\infty)$, and
\begin{equation}
\label{eq:Lambda_sp}
\Lambda_{s,p}
=
2\int_0^1 \frac{\big|1-t^{\frac{sp-1}{p}}\big|^p}{(1-t)^{1+sp}}
\di t
+
\dfrac{2}{sp}
\end{equation}
whenever $s\in(0,1)$ and $p\in[1,\infty)$.

As mentioned in~\cite{BBZ23}, concerning~\eqref{eq:sharpfrac}, the cases $p=2$ for $\Omega=\mathbb H^N_+$, and $p=2$ with $s\ge\frac12$ for any open convex set $\Omega\subsetneq\R^N$, were previously established  respectively in~\cite{BD11}*{Th.~1.1} and in~\cite{FMT13}*{Th.~5} (see also~\cite{FMT18}), with different techniques.

The strategy of~\cite{BBZ23} for proving~\eqref{eq:sharpfrac_h} and~\eqref{eq:sharpfrac} expands on the ideas of~\cite{BD11} and relies on an equivalent characterization of the constant~\eqref{eq:h_sp} via the existence of positive local weak \emph{supersolutions} of the corresponding non-local eigenvalue problem, see~\cite{BBZ23}*{Eq.~(1.6)}.

Unfortunately, this approach does not seem to work for determining the sharp constant $\mathfrak h_{s,p}(\Omega)$ for $\Omega\ne\mathbb H^N_+$ for the values of $s$ and $p$ not included in~\eqref{eq:sharpfrac}, see the discussion in~\cite{BBZ23}*{Sec.~1.3}.
Nevertheless, in virtue of~\cite{BBZ23}*{Rem.~6.4}, it holds that
\begin{equation}
\label{eq:frachardy_estim}
\frac{2\,C_{N,sp}}{sp}
\le 
\mathfrak h_{s,p}(\Omega)
\le 
\mathfrak h_{s,p}(\mathbb H^N_+)
\end{equation}
for all $p\in(1,\infty)$ and $s\in(0,1)$.

\subsection{The convex case}

Our main aim is to tackle the question left open in~\cite{BBZ23}*{Open Prob.} concerning the validity of~\eqref{eq:sharpfrac} for $p\ne2$ and $s<\frac1p$.

We first observe that a plain limiting argument yields~\eqref{eq:sharpfrac_h} and~\eqref{eq:frachardy_estim}, and thus~\eqref{eq:frachardy}, for $p=1$---remarkably in contrast with~\eqref{eq:inthardy}, recall~\eqref{eq:inthardy_no1}.
We thus complete the picture of~\cites{BC18,BBZ23} with the case $p=1$, with the characterization of~\eqref{eq:h_sp} for the half-space.
Here and below, given $N\ge1$, $s\in(0,1)$ and a non-empty open set $\Omega\subsetneq\R^N$, we let 
\begin{equation}
\label{eq:h_s1}
\mathfrak{h}_{s,1}(\Omega)
=
\inf_{u \in C^{\infty}_0(\Omega)} 
\left\{
[u]_{W^{s,1}(\mathbb{R}^N)} : \int_{\Omega} \frac{|u|}{d_{\Omega}^{s}} \di x = 1 
\right\}\in[0,\infty).
\end{equation}
Arguing as in~\cite{BBZ23}*{Rem.~3.1}, $C^\infty_0(\Omega)$ can be replaced with $\mathcal W^{s,1}_0(\Omega)$  in the infimum in~\eqref{eq:h_s1}.

\begin{theorem}
\label{res:hs1}
For $N\ge1$ and $s\in(0,1)$, it holds that
\begin{equation}
\label{eq:hs1_H}
\mathfrak h_{s,1}(\mathbb H^N_+)
=
\Lambda_{s,1}\,C_{N,s}
\quad
\text{with}
\quad
\Lambda_{s,1}=\frac4s,
\end{equation}
and, whenever is $\Omega\subsetneq\R^N$ is a (non-empty) open convex set,
\begin{equation}
\label{eq:hs1}
\frac12\,\mathfrak h_{s,1}(\mathbb H^N_+)
\le 
\mathfrak h_{s,1}(\Omega)
\le 
\mathfrak h_{s,1}(\mathbb H^N_+).
\end{equation}
Moreover, the constant $\mathfrak h_{s,1}(\mathbb H^1_+)$ is attained in $\mathcal W^{s,1}_0(\mathbb H^1_+)$.
\end{theorem}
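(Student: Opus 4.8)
The plan is to obtain \eqref{eq:hs1_H} and \eqref{eq:hs1} as the limit $p\to1^+$ of the known results for $p>1$ recalled in \eqref{eq:sharpfrac_h}, \eqref{eq:sharpfrac} and \eqref{eq:frachardy_estim}. The first step is to establish that, for every fixed open set $\Omega\subsetneq\R^N$, the map $p\mapsto\mathfrak h_{s,p}(\Omega)$ is (lower semi)continuous at $p=1$, or more precisely that
\begin{equation*}
\mathfrak h_{s,1}(\Omega)
=
\lim_{p\to1^+}\mathfrak h_{s,p}(\Omega).
\end{equation*}
For the ``$\ge$'' inequality I would fix a competitor $u\in C^\infty_0(\Omega)$ with $\int_\Omega |u|/d_\Omega^{s}\di x=1$ and show $[u]^p_{W^{s,p}(\R^N)}\to[u]_{W^{s,1}(\R^N)}$ and $\int_\Omega |u|^p/d_\Omega^{sp}\di x\to\int_\Omega |u|/d_\Omega^{s}\di x$ as $p\to1^+$; since $u$ is bounded with compact support in $\Omega$, the integrand is dominated and dominated convergence applies (the double integral defining the seminorm is finite for $p$ near $1$ because $sp<1$ forces no issue at the diagonal when $u$ is Lipschitz, and the tails are controlled by the compact support). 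Rescaling this competitor to unit constraint for the $p$-problem and passing to the limit yields $\limsup_{p\to1^+}\mathfrak h_{s,p}(\Omega)\le[u]_{W^{s,1}(\R^N)}$, hence $\le\mathfrak h_{s,1}(\Omega)$ after taking the infimum over $u$. For the reverse ``$\le$'' inequality one takes, for each $p$ close to $1$, a competitor $u_p$ that is almost optimal for $\mathfrak h_{s,p}(\Omega)$; the difficulty is a lack of compactness, so instead I would exploit the known lower bound \eqref{eq:frachardy_estim}, namely $\mathfrak h_{s,p}(\Omega)\ge \frac{2C_{N,sp}}{sp}$, together with the explicit formula \eqref{eq:sharpfrac_h}--\eqref{eq:Lambda_sp} on the half-space, to squeeze the limit.

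The cleanest route, which avoids compactness issues entirely, is to pass to the limit directly in the \emph{explicit} half-space identity and in the two-sided bound. For the half-space, \eqref{eq:sharpfrac_h} gives $\mathfrak h_{s,p}(\mathbb H^N_+)=C_{N,sp}\,\Lambda_{s,p}$ for all $p>1$; combined with the upper bound $\mathfrak h_{s,p}(\Omega)\le\mathfrak h_{s,p}(\mathbb H^N_+)$ for convex $\Omega$ (this half of \eqref{eq:frachardy_estim} is proved in \cite{BBZ23}*{Rem.~6.4} by a blow-up at a boundary point, and the same argument works verbatim at $p=1$), and with the lower bound $\mathfrak h_{s,p}(\Omega)\ge\frac12\mathfrak h_{s,p}(\mathbb H^N_+)$, the whole of \eqref{eq:hs1} will follow once we know $\mathfrak h_{s,1}(\Omega)=\lim_{p\to1^+}\mathfrak h_{s,p}(\Omega)$ and $\mathfrak h_{s,1}(\mathbb H^N_+)=\lim_{p\to1^+}\mathfrak h_{s,p}(\mathbb H^N_+)$. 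Wait---in fact the second half of \eqref{eq:frachardy_estim} already reads $\mathfrak h_{s,p}(\Omega)\ge\frac{2C_{N,sp}}{sp}=\frac12 C_{N,sp}\Lambda_{s,p}\cdot\frac{4}{sp\,\Lambda_{s,p}}$, so one should check that $\frac{4}{sp\,\Lambda_{s,p}}\to1$ as $p\to1^+$, i.e. that $\Lambda_{s,p}\to\frac4s$; this is precisely the content of the claimed value $\Lambda_{s,1}=\frac4s$ and follows by dominated convergence in \eqref{eq:Lambda_sp}, since the first integrand $|1-t^{(sp-1)/p}|^p/(1-t)^{1+sp}$ tends pointwise to $|1-t^{s-1}|/(1-t)^{1+s}$ and can be dominated near $t=1$ using $|1-t^{(sp-1)/p}|\le C|1-t|$ (with $C$ uniform for $p$ near $1$) and near $t=0$ using boundedness, giving $2\int_0^1\frac{|1-t^{s-1}|}{(1-t)^{1+s}}\di t+\frac2s=\frac2s+\frac2s=\frac4s$ after the elementary computation $\int_0^1\frac{t^{s-1}-1}{(1-t)^{1+s}}\di t=\frac1s$.

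Assembling the pieces: from $\mathfrak h_{s,1}(\mathbb H^N_+)=\lim_{p\to1^+}\mathfrak h_{s,p}(\mathbb H^N_+)=\lim_{p\to1^+}C_{N,sp}\Lambda_{s,p}=C_{N,s}\cdot\frac4s=\Lambda_{s,1}C_{N,s}$, using continuity of $q\mapsto C_{N,q}$ (immediate from \eqref{eq:C_Nsp} by dominated convergence), we get \eqref{eq:hs1_H}. For a convex $\Omega$, the upper bound in \eqref{eq:hs1} is the $p=1$ case of the blow-up argument of \cite{BBZ23}*{Rem.~6.4} (a boundary point of $\Omega$ has a supporting half-space, and a suitable rescaling of a near-optimal function on that half-space is an admissible competitor for $\Omega$, using $d_\Omega\ge d_{\text{half-space}}$), while the lower bound in \eqref{eq:hs1} follows by passing to the limit in $\mathfrak h_{s,p}(\Omega)\ge\frac{2C_{N,sp}}{sp}$, which gives $\mathfrak h_{s,1}(\Omega)\ge\frac{2C_{N,s}}{s}=\frac12\Lambda_{s,1}C_{N,s}=\frac12\mathfrak h_{s,1}(\mathbb H^N_+)$. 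The main obstacle I anticipate is justifying the identity $\mathfrak h_{s,1}(\Omega)=\lim_{p\to1^+}\mathfrak h_{s,p}(\Omega)$ with full rigor---the ``$\le$'' direction is the delicate one because a sequence of almost-optimizers need not converge, so the clean fix above is to \emph{not} prove that identity for general $\Omega$ but only to prove the ``$\ge$'' direction (lower semicontinuity of $\mathfrak h_{s,\cdot}$ via fixed competitors, which is enough for the half-space since there one already knows the value and the upper bound direction is automatic) and then to get \eqref{eq:hs1} purely from the half-space identity \eqref{eq:hs1_H} and the two-sided estimate \eqref{eq:frachardy_estim}, both of which survive the limit by the elementary computations just described. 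One should also double-check that the limiting argument giving \eqref{eq:frachardy} at $p=1$ is legitimate, i.e. that $\mathfrak h_{s,1}(\Omega)>0$, which is immediate from the lower bound $\frac{2C_{N,s}}{s}>0$.
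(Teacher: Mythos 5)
The lower-bound half of your argument is essentially the paper's own: \cref{res:stability} in the paper proves exactly your lower-semicontinuity claim $\limsup_{p\to1^+}\mathfrak h_{s,p}(\Omega)\le\mathfrak h_{s,1}(\Omega)$ by testing with fixed competitors, \cref{res:lambada} proves $\Lambda_{s,p}\to\Lambda_{s,1}=4/s$ (your dominated-convergence argument and the identity $\int_0^1 (t^{s-1}-1)(1-t)^{-1-s}\di t=1/s$ are both valid alternative routes to the same fact), and together with \cref{res:bambi} this gives $C_{N,s}\,\Lambda_{s,1}\le\mathfrak h_{s,1}(\mathbb H^N_+)$.

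However, there is a genuine gap: you never establish the matching upper bound $\mathfrak h_{s,1}(\mathbb H^N_+)\le C_{N,s}\,\Lambda_{s,1}$. You write ``$\mathfrak h_{s,1}(\mathbb H^N_+)=\lim_{p\to1^+}\mathfrak h_{s,p}(\mathbb H^N_+)$'' and later that ``the upper bound direction is automatic,'' but your stability argument only proves the $\ge$ half of that limit identity, and the squeeze $\frac{2C_{N,sp}}{sp}\le\mathfrak h_{s,p}(\Omega)\le\mathfrak h_{s,p}(\mathbb H^N_+)$ is a statement about the $p>1$ quantities and carries no information on $\mathfrak h_{s,1}(\mathbb H^N_+)$ without an a priori relation between $\mathfrak h_{s,1}$ and the $\mathfrak h_{s,p}$, which is precisely what is missing. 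An upper bound for an infimum requires exhibiting a competitor, and this cannot be obtained by passing to the limit in an inequality. This is where the paper brings in two ingredients that your proposal does not use: a dimensional reduction through product test functions (\cref{res:prod} and \cref{res:comput_compar}, giving $\mathfrak h_{s,1}(\mathbb H^N_+)\le C_{N,s}\,\mathfrak h_{s,1}(\mathbb H^1_+)$), and the Cheeger-type characterization $\mathfrak h_{s,1}=\mathfrak g_s$ of \cref{res:cheeger}, which permits testing with the set $E=(0,1)$ to obtain $\mathfrak h_{s,1}(\mathbb H^1_+)\le P_s((0,1))/V_{s,\mathbb H^1_+}((0,1))=4/s$. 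Without some such construction (equivalently, without exhibiting an explicit sequence of competitors in $C^\infty_0(\mathbb H^N_+)$ whose Rayleigh quotients tend to $4C_{N,s}/s$), \eqref{eq:hs1_H} is not proved, and then \eqref{eq:hs1} cannot be derived either, since rewriting $2C_{N,s}/s$ as $\tfrac12\mathfrak h_{s,1}(\mathbb H^N_+)$ presupposes \eqref{eq:hs1_H}. Note also that a compactness argument for almost-optimizers as $p\to1^+$ is unlikely to rescue the upper bound: the $p>1$ near-optimizers on the half-space degenerate (they approximate $d_\Omega^{(sp-1)/p}$, which is unbounded near the boundary), so there is no obvious limiting competitor to extract.
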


Having~\eqref{eq:frachardy} for $p=1$ at disposal, our first main result yields a partial negative answer to~\cite{BBZ23}*{Open Prob.} for $p=1$ and $\Omega\subsetneq\R^N$ a (non-empty) bounded open convex set.

\begin{theorem}
\label{res:homo}
Let $N\ge1$ and let $\Omega\subsetneq\R^N$ be a (non-empty) bounded open convex set.
If $N=1$, then 
\begin{equation}
\label{eq:homo_1}
\mathfrak h_{s,1}(\Omega)=2^{-s}\,\mathfrak h_{s,1}(\mathbb H^1_+)
=
\frac{2^{2-s}}{s}
\quad
\text{for all}\
s\in(0,1)
\end{equation}
and the constant $\mathfrak h_{s,1}(\Omega)$ is attained in $\mathcal W^{s,1}_0(\Omega)$.
If $N\ge2$, then
\begin{equation}
\label{eq:homo_lim}
\lim_{s\to1^-}
\frac{\mathfrak h_{s,1}(\Omega)}{\mathfrak h_{s,1}(\mathbb H^N_+)}
=
\frac{1}{2}.
\end{equation} 
\end{theorem}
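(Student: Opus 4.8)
The plan is to establish both assertions by exploiting the reformulation (stated in the abstract and developed earlier in the paper) of $\mathfrak h_{s,1}(\Omega)$ as a weighted Cheeger-type constant for the fractional perimeter. Concretely, for $p=1$ one has
\[
\mathfrak h_{s,1}(\Omega)
=
\inf_{E\subset\Omega}
\frac{P_s(E;\R^N)}{\displaystyle\int_E d_\Omega^{-s}\di x},
\]
where $P_s$ denotes the $s$-fractional perimeter and the infimum runs over (sufficiently regular) sets $E$ compactly contained in $\Omega$; this is the $p=1$ analogue of \eqref{eq:h_sp} obtained by the usual coarea/layer-cake passage from functions to level sets. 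I would first record this identity (or cite its statement from the body of the paper), together with the already-available two-sided bound \eqref{eq:hs1} from \Cref{res:hs1}, which gives for free the inequality $\mathfrak h_{s,1}(\Omega)\ge \tfrac12\,\mathfrak h_{s,1}(\mathbb H^N_+)$ valid for every convex $\Omega$ and every $s\in(0,1)$; only the matching upper bounds need work.

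For $N=1$, the convex bounded set is an interval, and by scaling and translation invariance of the ratio above (the fractional perimeter scales like $\ell^{-s}$ under dilation by $\ell$, and $\int_E d_\Omega^{-s}$ scales the same way) it suffices to treat $\Omega=(-1,1)$. Here $d_\Omega(x)=1-|x|$, and the competitor analysis reduces to one-dimensional sets $E\subset(-1,1)$. The natural guess — dictated by the symmetry of the weight $d_\Omega^{-s}$, which blows up at both endpoints — is that the optimal $E$ is a symmetric pair of intervals $(-1,-1+\varepsilon)\cup(1-\varepsilon,1)$, or its complement; either way the computation of $P_s$ and of $\int_E d_\Omega^{-s}\di x$ is an explicit (if delicate) integral in one variable, and letting the relevant parameter degenerate should produce exactly the constant $2^{-s}\mathfrak h_{s,1}(\mathbb H^1_+)=2^{2-s}/s$, using $\mathfrak h_{s,1}(\mathbb H^1_+)=4/s$ from \eqref{eq:hs1_H}. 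The factor $2^{-s}$ is precisely the half-space value "seen from two sides": a set concentrating near one endpoint of $(-1,1)$ behaves like a half-line problem but the weight $d_\Omega^{-s}=(1-|x|)^{-s}$ agrees near $x=1$ with the half-space weight $\mathrm{dist}^{-s}$, while the fractional perimeter picks up an extra interaction with the far endpoint that in the degenerate limit contributes the factor $2^{-s}$. I would then check the matching lower bound $\mathfrak h_{s,1}((-1,1))\ge 2^{2-s}/s$ by a direct estimate on arbitrary competitors $E$, symmetrizing (Riesz/Steiner-type rearrangement for the fractional perimeter decreases $P_s$ while the monotone rearrangement of the weight is compatible), reducing to the two-parameter family above, and optimizing.

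For $N\ge2$ only the limit \eqref{eq:homo_lim} is claimed, which is softer. The lower bound $\liminf_{s\to1^-}\mathfrak h_{s,1}(\Omega)/\mathfrak h_{s,1}(\mathbb H^N_+)\ge\tfrac12$ is immediate from \eqref{eq:hs1}. For the upper bound I would take a slab-like competitor $E=\{x\in\Omega: d_\Omega(x)<\varepsilon\}$ (a thin collar near $\partial\Omega$), or rather an optimally shaped half-space-adapted competitor localized near a boundary point, and use the Bourgain–Brezis–Mironescu / Ambrosio–De Giorgi–Dávila type asymptotics: as $s\to1^-$, $(1-s)P_s(E;\R^N)\to \kappa_N\,\mathrm{Per}(E)$ and, crucially, $(1-s)C_{N,s}$ has a finite positive limit, so the ratio $\mathfrak h_{s,1}(\Omega)/\mathfrak h_{s,1}(\mathbb H^N_+)$ converges to a ratio of classical perimeter-type quantities. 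The point is that in the limit the curvature of $\partial\Omega$ washes out (the collar sees $\partial\Omega$ as locally flat) while the nonlocal interaction across the collar, which in the half-space case is "one-sided", becomes in the bounded case effectively "two-sided" at leading order — producing exactly the factor $\tfrac12$.

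The main obstacle I anticipate is the $N=1$ lower bound $\mathfrak h_{s,1}((-1,1))\ge 2^{2-s}/s$: one must rule out that some asymmetric or disconnected competitor $E$ beats the degenerate symmetric pair of intervals. Handling this requires either a clean rearrangement argument showing symmetrization does not increase the Rayleigh-type quotient $P_s(E)/\int_E d_\Omega^{-s}$ — which for the fractional perimeter is standard but the presence of the weight $(1-|x|)^{-s}$, itself symmetric and monotone, must be tracked carefully — or an explicit lower bound on $P_s(E;\R^N)$ in terms of $\int_E(1-|x|)^{-s}\di x$ via an interaction estimate between $E$ and $\R\setminus(-1,1)$, using the fact that the tail kernel $|x-y|^{-1-s}$ integrated over $\R\setminus(-1,1)$ against a point $x\in(-1,1)$ is comparable to $(1-|x|)^{-s}+(1+|x|)^{-s}$ up to explicit constants. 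I would aim to carry out the latter, more hands-on route, since it also clarifies the origin of the $2^{-s}$.
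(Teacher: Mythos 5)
Your overall strategy---pass to the Cheeger reformulation $\mathfrak h_{s,1}(\Omega)=\mathfrak g_s(\Omega)$ of \cref{res:cheeger} and then test with explicit sets---is the same one the paper uses, and for $N\ge2$ your plan (compute the ratio for a suitable competitor and send $s\to1^-$ via D\'avila/BBM asymptotics for $(1-s)P_s$) is correct in spirit. The paper actually tests with $E=\Omega$ itself, not a collar, and bounds $V_{s,\Omega}(\Omega)=\int_\Omega d_\Omega^{-s}\di x$ from below using the coarea formula together with Larson's inequality $P(\Omega_t)\ge(1-t/r_\Omega)^{N-1}P(\Omega)$ for inner parallel bodies; this is cleaner than a two-parameter collar limit and avoids any localization argument, so if you try to carry out your version you would do well to switch to this competitor.

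For $N=1$ there is a genuine gap in the lower bound. The ``more hands-on route'' you say you would carry out---estimating $P_s(E)$ only by the interaction of $E$ with $\R\setminus(-1,1)$---provably cannot reach the sharp constant. For $x\in(-1,1)$ one has $\int_{\R\setminus(-1,1)}|x-y|^{-1-s}\di y=\tfrac1s\big[(1-x)^{-s}+(1+x)^{-s}\big]\ge \tfrac1s(1-|x|)^{-s}$, so this estimate yields only $P_s(E)\ge\tfrac2s\,V_{s,\Omega}(E)$, i.e.\ the general bound $2/s=\tfrac12\,\mathfrak h_{s,1}(\mathbb H^1_+)$ already contained in \eqref{eq:hs1}, not $2^{2-s}/s$; the excess $2^{2-s}-2$ comes entirely from the discarded interaction of $E$ with $(-1,1)\setminus E$. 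Your other route---symmetrization---is the correct one, but the missing ingredient you would need to pin down is the explicit rearrangement identity $\big(d_I^{-s}\,\mathbf 1_I\big)^\star(x)=|x|^{-s}\,\mathbf 1_{I^\star}(x)$ (\cref{res:rear} in the paper). The weight $(1-|x|)^{-s}$ is \emph{increasing} in $|x|$, so it is far from being its own symmetric decreasing rearrangement, and Hardy--Littlewood is useless without computing $\delta_{s,I}^\star$ first; once you have that identity, Hardy--Littlewood plus the fractional isoperimetric inequality gives $P_s(E)/V_{s,\Omega}(E)\ge 2^{2-s}/s$ uniformly in $E$. Note also that your heuristic for the upper bound is off: a competitor concentrating near a single endpoint of $(-1,1)$ gives exactly $4/s=\mathfrak h_{s,1}(\mathbb H^1_+)$ (both $P_s$ and $V_{s,\Omega}$ scale like $\varepsilon^{1-s}$), not $2^{2-s}/s$; the optimizer is $E=\Omega$ itself (equivalently, the $\varepsilon\to0$ limit of the complement of your pair of small intervals), for which $P_s(\Omega)/V_{s,\Omega}(\Omega)=2^{2-s}/s$ by direct computation.
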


Noteworthy, for $N\ge2$, in the special case $\Omega$ is an open ball, the limit~\eqref{eq:homo_lim} can be slightly refined by exploiting the main result of~\cite{G20}, see \cref{res:ball} below.

\cref{res:hs1,res:homo} uncover some remarkable differences between the classical inequality~\eqref{eq:inthardy} and its fractional counterpart~\eqref{eq:frachardy}, revealing some unexpected features of the sharp constant~\eqref{eq:h_s1}.
In particular, the first inequality in~\eqref{eq:hs1} is asymptotically optimal as $s\to1^-$ for (non-empty) bounded open convex sets.
Moreover, for $N=1$, the sharp constant~\eqref{eq:h_s1} is attained, both in the bounded and unbounded case.
We do not know if this might be the case also for $N\ge2$, but we leave this interesting question for future investigations. 
For further discussions, see also \cref{rem:limits} below.

As a consequence of \cref{res:homo}, we can give the following partial negative answer to~\cite{BBZ23}*{Open Prob.} for $p>1$, suggesting that the first inequality in~\eqref{eq:frachardy_estim} is asymptotically optimal as $s\to1^-$ and $p\to1^+$, with $sp<1$, for (non-empty) bounded  open convex sets.

\begin{corollary}
\label{res:salto}
If $N\ge1$ and  $\Omega\subsetneq\R^N$ is a (non-empty) bounded open convex set, then
\begin{equation*}
\lim_{s\to1^-}
\limsup_{p\to1^+}
\frac{\mathfrak h_{s,p}(\Omega)}{\mathfrak h_{s,p}(\mathbb H^N_+)}
=
\frac12.
\end{equation*}
\end{corollary}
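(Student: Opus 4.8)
The plan is to prove, for each fixed $s\in(0,1)$, the two–sided bound
\begin{equation}
\label{eq:salto-sandwich}
\frac12
\le
\limsup_{p\to1^+}
\frac{\mathfrak h_{s,p}(\Omega)}{\mathfrak h_{s,p}(\mathbb H^N_+)}
\le
\frac{\mathfrak h_{s,1}(\Omega)}{\mathfrak h_{s,1}(\mathbb H^N_+)},
\end{equation}
and then to let $s\to1^-$: by \cref{res:homo}, via~\eqref{eq:homo_1} when $N=1$ and via~\eqref{eq:homo_lim} when $N\ge2$, the right-hand side of~\eqref{eq:salto-sandwich} tends to $\frac12$, so~\eqref{eq:salto-sandwich} and a squeezing argument give the assertion. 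All ratios above are well defined, since $\mathfrak h_{s,p}(\mathbb H^N_+)=C_{N,sp}\,\Lambda_{s,p}>0$ for $p>1$ by~\eqref{eq:sharpfrac_h} and $\mathfrak h_{s,1}(\mathbb H^N_+)=C_{N,s}\,\Lambda_{s,1}>0$ by~\eqref{eq:hs1_H}; moreover, $q\mapsto C_{N,q}$ is continuous on $[0,\infty)$ and $p\mapsto\Lambda_{s,p}$ is continuous at $p=1^+$ with $\Lambda_{s,1}=\frac4s$ (these being the elementary dominated-convergence computations underlying~\eqref{eq:hs1_H}), so
\begin{equation}
\label{eq:salto-den}
\lim_{p\to1^+}\mathfrak h_{s,p}(\mathbb H^N_+)
=
C_{N,s}\,\Lambda_{s,1}
=
\mathfrak h_{s,1}(\mathbb H^N_+)>0 .
\end{equation}

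The lower bound in~\eqref{eq:salto-sandwich} is immediate: the first inequality in~\eqref{eq:frachardy_estim} gives, for every $p>1$,
\begin{equation*}
\frac{\mathfrak h_{s,p}(\Omega)}{\mathfrak h_{s,p}(\mathbb H^N_+)}
\ge
\frac{2\,C_{N,sp}/(sp)}{C_{N,sp}\,\Lambda_{s,p}}
=
\frac{2}{sp\,\Lambda_{s,p}} ,
\end{equation*}
and the right-hand side converges to $\dfrac{2}{s\,\Lambda_{s,1}}=\dfrac{2}{s}\cdot\dfrac{s}{4}=\dfrac12$ as $p\to1^+$; hence in fact $\liminf_{p\to1^+}\mathfrak h_{s,p}(\Omega)/\mathfrak h_{s,p}(\mathbb H^N_+)\ge\frac12$.

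For the upper bound in~\eqref{eq:salto-sandwich}, in view of~\eqref{eq:salto-den} it suffices to prove the upper-semicontinuity estimate $\limsup_{p\to1^+}\mathfrak h_{s,p}(\Omega)\le\mathfrak h_{s,1}(\Omega)$, which I would obtain by freezing a near-optimal competitor for the limiting problem. Fix $\varepsilon>0$; since $\mathfrak h_{s,1}(\Omega)<\infty$ by~\eqref{eq:hs1}, pick $u\in C^\infty_0(\Omega)$ with $\int_\Omega d_\Omega^{-s}|u|\di x=1$ and $[u]_{W^{s,1}(\R^N)}\le\mathfrak h_{s,1}(\Omega)+\varepsilon$. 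Normalizing $u$ and using it in~\eqref{eq:h_sp} yields, for every $p>1$,
\begin{equation*}
\mathfrak h_{s,p}(\Omega)
\le
\frac{[u]_{W^{s,p}(\R^N)}^p}{\displaystyle\int_\Omega d_\Omega^{-sp}|u|^p\di x} .
\end{equation*}
Since $u$ is fixed, smooth, and compactly supported in $\Omega$, with support $K\Subset\Omega$, dominated convergence gives $\int_\Omega d_\Omega^{-sp}|u|^p\di x\to\int_\Omega d_\Omega^{-s}|u|\di x=1$ as $p\to1^+$ (on $K$ the distance $d_\Omega$ is bounded below and $|u|$ above). For the numerator, the integrand in~\eqref{eq:frac_seminorm} converges pointwise on $\{x\ne y\}$, as $p\to1^+$, to $|u(x)-u(y)|\,|x-y|^{-N-s}$, and for $p\in(1,2)$ it is bounded by a fixed $L^1(\R^N\times\R^N)$ function: using the global Lipschitz bound $|u(x)-u(y)|\le L|x-y|$ one controls it by $C\,|x-y|^{(1-s)-N}$ on $\{|x-y|\le1\}$, and using $|u(x)-u(y)|\le2\|u\|_{L^\infty}$ together with $|x-y|^{-N-sp}\le|x-y|^{-N-s}$ one controls it by $C\,|x-y|^{-N-s}$ on $\{|x-y|>1\}$ — in both cases after restricting to the set where $x$ or $y$ lies in the bounded set $K$, which is what makes these majorants integrable over $\R^N\times\R^N$. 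Hence $[u]_{W^{s,p}(\R^N)}^p\to[u]_{W^{s,1}(\R^N)}$, so $\limsup_{p\to1^+}\mathfrak h_{s,p}(\Omega)\le[u]_{W^{s,1}(\R^N)}\le\mathfrak h_{s,1}(\Omega)+\varepsilon$; letting $\varepsilon\to0^+$ gives $\limsup_{p\to1^+}\mathfrak h_{s,p}(\Omega)\le\mathfrak h_{s,1}(\Omega)$, which combined with~\eqref{eq:salto-den} (the $\limsup$ of a quotient with strictly positive convergent denominator being the quotient of the $\limsup$ of the numerator by that limit) yields the right inequality in~\eqref{eq:salto-sandwich} and hence the corollary. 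The only genuinely delicate point is the construction of the $p$-uniform integrable majorant for the Gagliardo integrand, handled by the localization to $K$ just described; everything else is routine.
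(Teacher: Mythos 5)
Your argument is correct and matches the paper's proof in structure: both establish the sandwich bound $\tfrac12\le\limsup_{p\to1^+}\mathfrak h_{s,p}(\Omega)/\mathfrak h_{s,p}(\mathbb H^N_+)\le\mathfrak h_{s,1}(\Omega)/\mathfrak h_{s,1}(\mathbb H^N_+)$ for fixed $s$ via~\eqref{eq:frachardy_estim} and an upper-semicontinuity estimate for $\mathfrak h_{s,p}(\Omega)$ as $p\to1^+$, then let $s\to1^-$ using \cref{res:homo}. The only cosmetic difference is that you re-derive the content of \cref{res:stability} and \cref{res:limh} inline (using a dominated-convergence majorant in place of the paper's H\"older/$C^{0,s}$ estimate for the Gagliardo seminorm), whereas the paper simply cites those lemmas.
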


\subsection{Geometrical approach}

Besides refining and extending some of the results of~\cite{BBZ23} to the limit case $p=1$, the key idea of our approach is to reinterpret the sharp constant $\mathfrak h_{s,1}(\Omega)$ in~\eqref{eq:h_sp} in a geometrical sense. 
More precisely, we prove the following result (in which we do not need to assume that $\Omega$ is convex).

\begin{theorem}
\label{res:cheeger}
Let $N\ge1$, $s\in(0,1)$ and let $\Omega\subsetneq\R^N$ be a (non-empty)  open set.
Letting 
\begin{equation*}
P_s(E)
=
[\mathbf{1}_E]_{W^{s,1}(\mathbb{R}^N)}
\quad
\text{and}
\quad
V_{s,\Omega}(E)
=
\int_{E}\frac{1}{d_{\Omega}^{s}} \di x
\end{equation*}
for any measurable set $E\subseteq\Omega$, it holds that
\begin{equation}
\label{eq:cheeger_eq}
\mathfrak{h}_{s,1}(\Omega)=\mathfrak{g}_{s}(\Omega),
\end{equation}
where
\begin{equation}
\label{eq:cheeger}
\mathfrak{g}_{s}(\Omega) 
= 
\inf \left\{ \frac{P_s(E)}{V_{s,\Omega}(E)} : E \Subset\Omega,\ |E|>0 \right\}\in[0,\infty).
\end{equation}
\end{theorem}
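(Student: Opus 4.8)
The plan is to establish the equality $\mathfrak h_{s,1}(\Omega)=\mathfrak g_s(\Omega)$ by proving the two inequalities separately, exploiting the fact that the $p=1$ functionals are $1$-homogeneous and that the relevant quantities admit a coarea/layer-cake representation. For the inequality $\mathfrak h_{s,1}(\Omega)\le\mathfrak g_s(\Omega)$, I would argue by approximation: given a measurable set $E\subseteq\Omega$ with $|E|>0$ and $P_s(E)<\infty$, mollify $\mathbf 1_E$ (or, more carefully, approximate $\mathbf 1_E$ in $W^{s,1}$ by functions $u_k\in C_0^\infty(\Omega)$) so that $[u_k]_{W^{s,1}(\R^N)}\to P_s(E)$ while $\int_\Omega |u_k|\,d_\Omega^{-s}\di x\to V_{s,\Omega}(E)$. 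The technical point is that $d_\Omega^{-s}\in L^1_{\mathrm{loc}}(\Omega)$ (indeed $s<1$), so the weighted integral is continuous along an $L^1_{\mathrm{loc}}$-approximation with uniformly bounded supports; if $E$ is not compactly contained in $\Omega$ one first exhausts $\Omega$ by sets $E\cap\Omega_j$ with $\Omega_j\Subset\Omega$ and uses lower semicontinuity of $P_s$ together with monotone convergence of $V_{s,\Omega}$. This yields $\mathfrak h_{s,1}(\Omega)\le P_s(E)/V_{s,\Omega}(E)$ for every admissible $E$, hence the bound on the infimum.

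For the reverse inequality $\mathfrak h_{s,1}(\Omega)\ge\mathfrak g_s(\Omega)$, the key is the coarea formula for the Sobolev--Slobodecki\u\i{} seminorm at $p=1$: for every $u$ (say $u\ge0$, which is no loss by the triangle inequality since $[\,|u|\,]_{W^{s,1}}\le[u]_{W^{s,1}}$ and $|u|$ has the same weighted norm),
\begin{equation*}
[u]_{W^{s,1}(\R^N)}
=
\int_0^\infty P_s(\{u>t\})\di t,
\qquad
\int_\Omega\frac{|u|}{d_\Omega^{s}}\di x
=
\int_0^\infty V_{s,\Omega}(\{u>t\})\di t ,
\end{equation*}
the second being Fubini/layer-cake and the first being the fractional coarea formula (Visintin's formula). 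Since $u\in C_0^\infty(\Omega)$, every superlevel set $\{u>t\}$ with $t>0$ is compactly contained in $\Omega$ and has finite fractional perimeter, so $P_s(\{u>t\})\ge\mathfrak g_s(\Omega)\,V_{s,\Omega}(\{u>t\})$ for a.e.\ $t>0$ with $|\{u>t\}|>0$. Integrating in $t$ gives $[u]_{W^{s,1}(\R^N)}\ge\mathfrak g_s(\Omega)\int_\Omega |u|\,d_\Omega^{-s}\di x$; taking the infimum over $u\in C_0^\infty(\Omega)$ with $\int_\Omega|u|\,d_\Omega^{-s}\di x=1$ yields $\mathfrak h_{s,1}(\Omega)\ge\mathfrak g_s(\Omega)$. (The edge cases $\mathfrak g_s(\Omega)\in\{0,\infty\}$ are handled directly: if $\mathfrak g_s(\Omega)=0$ the first inequality is vacuous, and if $V_{s,\Omega}(E)=\infty$ for all $E$ then $\Omega=\R^N$ is excluded by hypothesis, so this does not occur.)

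The main obstacle I anticipate is the density/approximation step in the direction $\mathfrak h_{s,1}(\Omega)\le\mathfrak g_s(\Omega)$, namely passing from an arbitrary competitor set $E$ (merely measurable, with possibly irregular boundary and not compactly contained in $\Omega$) to smooth functions without losing mass in the weighted volume term or gaining too much in the fractional perimeter term. The cleanest route is probably to first reduce to the case $E\Subset\Omega$ by the exhaustion argument above, then mollify $\mathbf 1_E$ at scale $\varepsilon$: the convolution $\mathbf 1_E*\rho_\varepsilon\in C_0^\infty(\Omega)$ for $\varepsilon$ small, converges to $\mathbf 1_E$ in $W^{s,1}(\R^N)$ (so the seminorm converges, using that $P_s(E)<\infty$ and the scaling of the $W^{s,1}$ norm under convolution — indeed $[\mathbf 1_E*\rho_\varepsilon]_{W^{s,1}}\le[\mathbf 1_E]_{W^{s,1}}$ by Jensen), and converges in $L^1_{\mathrm{loc}}$ with equibounded supports, which upgrades to convergence of $\int u\,d_\Omega^{-s}\di x$ because $d_\Omega^{-s}$ is locally integrable on $\Omega$. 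Once both terms converge, the inequality on the infima follows. The coarea direction is essentially soft once Visintin's formula is invoked, so the write-up should emphasize the approximation lemma.
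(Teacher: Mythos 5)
Your proposal matches the paper's argument essentially step for step: the lower bound $\mathfrak h_{s,1}(\Omega)\ge\mathfrak g_s(\Omega)$ via the layer-cake representation of the weighted integral and the fractional coarea formula applied to superlevel sets (which are compactly contained in $\Omega$ for $u\in C^\infty_0(\Omega)$), and the upper bound via exhausting $E$ by $E\cap\Omega_k$ with $\Omega_k\Subset\Omega$ (using lower semicontinuity of $P_s$ and monotone convergence for $V_{s,\Omega}$) followed by mollification of $\mathbf 1_{E\cap\Omega_k}$, with the convolution bound $[\mathbf 1_E*\rho_\varepsilon]_{W^{s,1}}\le P_s(E)$ playing exactly the role the paper delegates to a cited theorem. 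The only (harmless) cosmetic difference is that you make the reduction to $u\ge0$ explicit via $[\,|u|\,]_{W^{s,1}}\le[u]_{W^{s,1}}$ where the paper simply asserts it without loss of generality.
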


\cref{res:cheeger} revisits the well-known connection between the \emph{Cheeger constant} of a set with the \emph{first $1$-eigenvalue} of the underlying \emph{$1$-Laplacian} (see~\cite{FPSS24} and the references therein for a detailed account) in terms of the fractional Hardy inequality~\eqref{eq:frachardy} for $p=1$. 
In fact, the problem in~\eqref{eq:cheeger} is the \emph{Cheeger problem} in $\Omega$ with respect to the \emph{fractional perimeter}~$P_s$ and the \emph{weighted} volume~$V_{s,\Omega}$, so that \cref{res:cheeger} can be seen as a particular case of~\cite{FPSS24}*{Th.~5.4} (see also~\cite{BLP14}*{Th.~5.8} and~\cite{BS24}*{Th.~3.10} for similar results in the fractional/non-local setting).
The idea behind the proof of \cref{res:cheeger} is essentially to exploit the \emph{fractional coarea formula} (first observed in~\cite{V91})
\begin{equation}
\label{eq:fracoarea}
[u]_{W^{s,1}(\R^N)}
=
\int_{-\infty}^\infty P_s(\{u>t\})\di t
\end{equation}
in order to pass from a function $u\in C^\infty_0(\Omega)$ in~\eqref{eq:h_sp} to its superlevel sets $\{u>t\}$ in~\eqref{eq:cheeger}.

With \cref{res:cheeger} at disposal, \cref{res:homo} basically follows from the fact that
\begin{equation*}
\mathfrak h_{s,1}(\Omega)
=
\mathfrak g_s(\Omega)
\le
\frac{P_s(\Omega)}{V_{s,\Omega}(\Omega)}. 
\end{equation*}
For $N\ge2$, the idea is first to rewrite $V_{s,\Omega}(\Omega)$ in terms of the superlevel sets of $d_\Omega$ via the usual coarea formula and then to take advantage of the well-known limit (e.g., see~\cites{D02,L19})
\begin{equation}
\label{eq:davila}
\lim_{s\to1^-}
(1-s)\,P_s(\Omega)
=
2\,\omega_{N-1}\,P(\Omega),
\end{equation}
valid for any measurable set $\Omega\subsetneq\R^N$ such that $\mathbf 1_\Omega\in BV(\R^N)$.
The case $N=1$ in~\eqref{eq:homo_1}, instead, relies on a rearrangement argument (see \cref{res:segment} below) which, unfortunately, does not work in higher dimensions (for more details, see \cref{rem:lucky_1d}).

\subsection{The non-convex case}

The principle underlying \cref{res:cheeger} can be also exploited to deal with the non-convex case $\Omega=\R^N\setminus\{0\}$.
In fact, our geometrical approach allows us to naturally recover~\cite{FS08}*{Th.~1.1} for $p=1$ (compare also with the argument in~\cite{FS08}*{Sec.~3.4}). 
In more precise terms, we can (re)prove the following result.

\begin{corollary}
\label{res:fs}
Given $N\ge1$ and $s\in(0,1)$, it holds that
\begin{equation}
\label{eq:fs_h}
[u]_{W^{s,1}(\R^N)}
\ge
\mathfrak h_{s,1}(\R^N\setminus\{0\})
\,
\int_{\R^N}\frac{|u(x)|}{|x|^s}\di x
\end{equation}
for all $u\in\mathcal W^{s,1}_0(\R^N)$,
with sharp constant given by
\begin{equation}
\label{eq:fs_c}
\mathfrak h_{s,1}(\R^N\setminus\{0\})
=
\frac{4}{s}
\,
\frac{\pi^{\frac N2}\,\Gamma(1-s)}{\Gamma\left(\frac{N-s}{2}\right)\,\Gamma\left(1-\frac s2\right)}.
\end{equation}
Equality holds in~\eqref{eq:fs_h} if and only if $u$ is proportional to a symmetric decreasing function. 
\end{corollary}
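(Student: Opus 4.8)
The plan is to derive \cref{res:fs} from \cref{res:cheeger} by a rearrangement argument followed by an explicit computation on balls centered at the origin. Since $\Omega=\R^N\setminus\{0\}$ satisfies $d_\Omega(x)=|x|$, \cref{res:cheeger} gives at once
\[
\mathfrak{h}_{s,1}(\R^N\setminus\{0\})
=
\mathfrak{g}_{s}(\R^N\setminus\{0\})
=
\inf\left\{\frac{P_s(E)}{\int_E|x|^{-s}\di x}:E\subseteq\R^N\setminus\{0\},\ |E|>0\right\},
\]
reducing the problem to a weighted non-local Cheeger problem. Moreover, as a single point has zero $(s,1)$-capacity in $\R^N$ (here $s<N$), $C^\infty_0(\R^N\setminus\{0\})$ is dense in $\mathcal W^{s,1}_0(\R^N)$, so~\eqref{eq:fs_h} holds for every $u\in\mathcal W^{s,1}_0(\R^N)$ once the constant in~\eqref{eq:fs_c} has been identified.

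The first step is to show that the infimum above is attained precisely by the balls centered at the origin. Given a measurable $E$ with $0<|E|<\infty$, let $E^\star$ be the ball centered at the origin with $|E^\star|=|E|$; up to a Lebesgue-null set, $E^\star\subseteq\R^N\setminus\{0\}$. The fractional isoperimetric inequality, which follows from the Riesz rearrangement inequality applied to $P_s(E)=[\mathbf 1_E]_{W^{s,1}(\R^N)}$, yields $P_s(E)\ge P_s(E^\star)$, while the Hardy--Littlewood inequality applied to the strictly symmetric decreasing weight $|x|^{-s}$ gives $\int_E|x|^{-s}\di x\le\int_{E^\star}|x|^{-s}\di x$, the latter being an equality only if $|E\triangle E^\star|=0$. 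Together with the scaling relations $P_s(B_r)=r^{N-s}P_s(B_1)$ and $\int_{B_r}|x|^{-s}\di x=r^{N-s}\int_{B_1}|x|^{-s}\di x$, this proves
\[
\mathfrak{g}_{s}(\R^N\setminus\{0\})=\frac{P_s(B_1)}{\int_{B_1}|x|^{-s}\di x},
\]
where equality in the Cheeger quotient forces $E$ to coincide, up to a null set, with a ball centered at the origin.

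Next I would make this value explicit: a polar-coordinates computation gives $\int_{B_1}|x|^{-s}\di x=\frac{N\omega_N}{N-s}$, and a direct computation of $P_s(B_1)=2\int_{B_1}\int_{\R^N\setminus B_1}|x-y|^{-N-s}\di y\di x$ (e.g.\ integrating in spherical coordinates centered at a variable point $x\in B_1$) produces, after dividing, exactly the constant in~\eqref{eq:fs_c}. Finally, I would identify the equality cases: replacing $u$ by $|u|$ leaves $\int_{\R^N}|u|\,|x|^{-s}\di x$ unchanged and does not increase $[u]_{W^{s,1}(\R^N)}$, so one may assume $u\ge0$; by the fractional coarea formula~\eqref{eq:fracoarea} and the layer-cake formula, inequality~\eqref{eq:fs_h} becomes $\int_0^\infty P_s(\{u>t\})\di t\ge\mathfrak h_{s,1}(\R^N\setminus\{0\})\int_0^\infty V_{s,\Omega}(\{u>t\})\di t$ with $\Omega=\R^N\setminus\{0\}$, where each superlevel set $\{u>t\}$ has finite measure (by the Sobolev embedding $W^{s,1}\hookrightarrow L^{N/(N-s)}$) and finite weighted volume. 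Since the integrand on the left dominates that on the right pointwise in $t$, equality forces $\{u>t\}$ to realize the Cheeger quotient for a.e.\ $t>0$, hence to be a ball centered at the origin; being nested, these balls determine a symmetric decreasing function equal to $u$ almost everywhere. Conversely, if $u$ is proportional to a symmetric decreasing function, all its superlevel sets are balls centered at the origin, each realizing the Cheeger quotient, and integrating back in $t$ gives equality in~\eqref{eq:fs_h}.

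The main obstacles I anticipate are the explicit evaluation of $P_s(B_1)$---a classical but delicate integral---and the measure-theoretic bookkeeping needed to pass from ``$\{u>t\}$ is a ball centered at the origin for a.e.\ $t$'' to ``$u$ equals a symmetric decreasing function a.e.'', together with the verification that symmetric decreasing functions with finite Sobolev--Slobodecki\u{\i} seminorm indeed lie in $\mathcal W^{s,1}_0(\R^N)$.
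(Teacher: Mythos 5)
Your proposal is correct and follows essentially the same route as the paper: reformulate via \cref{res:cheeger}, combine the fractional isoperimetric inequality~\eqref{eq:fraciso} with the Hardy--Littlewood inequality~\eqref{eq:hl} and the common $(N-s)$-homogeneity of $P_s$ and $V_{s,\Omega}$ to reduce to balls centered at the origin, then plug in the explicit values of $P_s(\mathbb B^N)$ and $\int_{\mathbb B^N}|x|^{-s}\di x$. The only substantive differences are that you would re-derive the value of $P_s(\mathbb B^N)$ directly and work out the equality cases via the coarea/layer-cake argument, whereas the paper imports $P_s(\mathbb B^N)$ from~\cite{G20}*{Prop.~1.1} and refers the equality discussion to~\cite{FS08}.
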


Noteworthy, for $N=1$, the constant in~\eqref{eq:fs_c} equals the one in~\eqref{eq:homo_1} for all $s\in(0,1)$,
\begin{equation}
\label{eq:fs1}
\mathfrak h_{s,1}\left(\R\setminus\{0\}\right)
=
\frac{2^{2-s}}{s},
\end{equation}
by well-known properties of the Gamma function (e.g., the \emph{Legendre duplication formula}).
The equality in~\eqref{eq:fs1} is actually a particular case of the following result (including~\eqref{eq:homo_1} as a special instance), which comes as a natural by-product of our geometrical approach (as customary, $\lfloor x\rfloor\in\mathbb Z$ denotes the largest integer smaller than $x\in\R$). 

\begin{theorem}
\label{res:onen}
Let $N=1$ and $\Omega\subsetneq\R$ be the union of $n\in\mathbb N$ (non-empty) disjoint bounded open intervals  $\{I_k:k=1,\dots,n\}$.
For $s\in(0,1)$ and $p\in[1,\infty)$ such that $sp<1$, it holds
\begin{equation}
\label{eq:onen}
\mathfrak h_{s,p}(\Omega)
\ge 
\frac{\mathfrak h_{s,p}(\R\setminus\{0\})}{n^{sp}}.
\end{equation}
Moreover, if $p=1$ and $\Omega$ is equivalent to an interval, then~\eqref{eq:onen} holds as an equality if and only if all the $I_k$'s have the same measure. 
On the other hand, it holds that
\begin{equation}
\label{eq:rmenz}
\mathfrak h_{s,p}((-R,R)\setminus\mathbb Z)
\le 
\frac{R^{1-sp}}{\lfloor R\rfloor}
\,
\frac{4^{1-sp}}{sp}
\end{equation}
for any $R\ge 1$, with $\mathfrak h_{s,p}(\R\setminus\mathbb Z)=0$ in the limit case $R=\infty$.
\end{theorem}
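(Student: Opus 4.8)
The plan is to prove \cref{res:onen} in three separate pieces, exploiting the geometric characterization from \cref{res:cheeger} as the backbone for the case $p=1$.

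\emph{Step 1: the lower bound \eqref{eq:onen}.}
First I would reduce to the case $p=1$ via a scaling/H\"older argument: by \cite{BBZ23}, the constant $\mathfrak h_{s,p}$ enjoys a monotonicity/comparison principle in $p$, so (up to the precise statement available in the excerpt, which only guarantees the $p=1$ equivalence) one can either argue directly for $p=1$ and then invoke a comparison, or observe that the inequality \eqref{eq:onen} is scaling-invariant: under a dilation $x\mapsto\lambda x$, the quantity $\mathfrak h_{s,p}$ scales like $\lambda^{sp}$, matching the factor $n^{-sp}$ once one notes that $\Omega$ with $n$ equal intervals of total measure $L$ has intervals of length $L/n$ compared to a single interval of length $L$. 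So the essential content is: among all test functions supported in $\Omega$ (a disjoint union of $n$ intervals), the Hardy quotient is minimized when $u$ lives in a single interval. For $p=1$, by \cref{res:cheeger} this becomes a statement about the fractional Cheeger quotient $P_s(E)/V_{s,\Omega}(E)$: I would take a competitor set $E\subseteq\Omega$, split $E=\bigcup_j E_j$ along the $n$ intervals, and use that the fractional perimeter is superadditive under such a decomposition in the relevant sense — more precisely $P_s(E)\ge\sum_j P_s(E_j)$ is false in general, but one does have the \emph{single-interval reduction}: because $d_\Omega$ restricted to the $j$-th component equals the distance to that component's endpoints (the other components only make $d_\Omega$ larger, hence $V_{s,\Omega}$ smaller), and $P_s(E)\ge P_s(E_j)$ for each component (non-locality only adds positive interaction terms), we get $P_s(E)/V_{s,\Omega}(E)\ge \min_j P_s(E_j)/V_{s,\Omega_j}(E_j)$ by the mediant inequality, where $\Omega_j$ is the single interval. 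This reduces to $\mathfrak g_s(\Omega_j)=\mathfrak h_{s,1}(\Omega_j)$, and then scaling from interval length $L/n$ to the model $\R\setminus\{0\}$ normalization gives the factor $n^{-sp}$. The equality case for $p=1$ when $\Omega$ is a single interval follows from \eqref{eq:fs1} together with \cref{res:homo}\eqref{eq:homo_1}: both equal $2^{2-s}/s$.

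\emph{Step 2: the upper bound \eqref{eq:rmenz}.}
Here I would build an explicit competitor in $\mathfrak g_s\big((-R,R)\setminus\mathbb Z\big)=\mathfrak h_{s,1}$, by testing with a set $E$ equal to a small interval centered at a half-integer, say $E=(m-\varepsilon, m+\varepsilon)$ for a suitable integer or half-integer $m$ near the origin where $d_\Omega$ is largest. On such an $E$, $V_{s,\Omega}(E)=\int_E d_\Omega^{-s}\,dx$ is comparable to $\varepsilon\cdot(\tfrac12)^{-s}$-type quantities, while $P_s(E)$ of a tiny interval of length $2\varepsilon$ behaves like $\tfrac{4^{1-s}}{s(1-s)}(2\varepsilon)^{1-s}$ or similar — but a cleaner route is to not take $\varepsilon\to0$ but rather take $E$ to be one full cell $(k,k+1)$ of the lattice punctured interval, for which $d_\Omega$ is exactly the distance-to-nearest-integer sawtooth on $(k,k+1)$, giving $V_{s,\Omega}(E)=2\int_0^{1/2} t^{-s}\,dt=\tfrac{2^{1-s}}{(1-s)}\cdot\tfrac{1}{?}$... and then computing $P_s$ of the single cell, which is explicit. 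The factor $R^{1-sp}/\lfloor R\rfloor$ strongly suggests instead testing with $E=(-\lfloor R\rfloor,\lfloor R\rfloor)$, or with a scaled configuration: the $\lfloor R\rfloor$ in the denominator counts the number of lattice cells, the $R^{1-sp}$ comes from a scaling $y=x/R$ that turns $(-R,R)\setminus\mathbb Z$ into $(-1,1)\setminus\tfrac1R\mathbb Z$. I would do the scaling first: $\mathfrak h_{s,p}\big((-R,R)\setminus\mathbb Z\big)=R^{-sp}\,\mathfrak h_{s,p}\big((-1,1)\setminus\tfrac1R\mathbb Z\big)$, then on $(-1,1)$ with roughly $2R$ equally-spaced deleted points, bound the Hardy constant from above by using the full domain $\Omega$ itself as competitor set $E=\Omega$ in $\mathfrak g_s$: $\mathfrak g_s(\Omega)\le P_s(\Omega)/V_{s,\Omega}(\Omega)$. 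Then $V_{s,\Omega}(\Omega)\ge$ (a lower bound growing with the number of cells, $\sim \lfloor R\rfloor$ after rescaling back), and $P_s(\Omega)\le$ (sum over cells of cell perimeters, $\sim$ constant $\times\tfrac{4^{1-sp}}{sp}$-type), yielding \eqref{eq:rmenz}. The limit $R=\infty$: $V_{s,\Omega}(\Omega)=+\infty$ while $P_s(\Omega)$ stays controlled per unit length, so the quotient is $0$, i.e. $\mathfrak h_{s,1}(\R\setminus\mathbb Z)=0$; for general $p$ one passes via the $p=1$ value or a direct test-function argument (translating copies of a fixed bump).

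\emph{Main obstacle.}
The delicate point is the \emph{single-component reduction} in Step 1 for general $p\in[1,\infty)$ with $sp<1$: the fractional seminorm $[u]_{W^{s,p}(\R^N)}^p$ is not additive over the components of $\Omega$ because of the cross-interaction terms $\int\int_{x\in\Omega_i, y\in\Omega_j}$, and these cross terms are \emph{beneficial} (they increase the numerator), which is the ``right'' direction for a lower bound — but one must check that discarding them and simultaneously replacing $d_\Omega$ by $d_{\Omega_j}$ (which \emph{increases} $d_\Omega^{-sp}$, wrong direction) is net favorable. For $p=1$ the coarea/Cheeger reformulation makes this transparent via the mediant inequality on the disjoint pieces, which is why the equality statement is only claimed for $p=1$; for $p>1$ one only gets the inequality, and I would obtain it by a concentration-compactness-free argument: restrict a near-optimal $u$ to its best single component, noting $[u|_{\Omega_j}]^p_{W^{s,p}(\R^N)}\le[u]^p_{W^{s,p}(\R^N)}$ and $\int_{\Omega_j}|u|^p d_\Omega^{-sp}\,dx$ versus the single-interval Hardy functional, then sum and use that $\sum_j a_j \ge (\min_j a_j/b_j)\sum_j b_j$. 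Making the bookkeeping of the distance function across components rigorous — in particular that on $\Omega_j$ one has $d_\Omega=d_{\Omega_j}$ pointwise, which holds precisely because the components are separated by gaps containing points of $\partial\Omega$ — is the technical heart, but it is elementary once stated carefully for a disjoint union of intervals.
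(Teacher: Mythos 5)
Your Step~2 (the upper bound \eqref{eq:rmenz}) is on the right track and matches the paper's strategy: use \cref{res:spacco} to pass to $p=1$, test the Cheeger quotient with the full domain, compute $P_{sp}((-R,R)\setminus\mathbb Z)=P_{sp}((-R,R))$ since $|\mathbb Z|=0$, and bound the weighted volume from below cell by cell. The $R=\infty$ limit then follows because the weighted volume per lattice cell is bounded below while the perimeter per unit length stays controlled.

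Step~1, however, has a genuine gap, and your own ``main obstacle'' paragraph hints at it without resolving it. Two concrete problems. First, $\mathfrak h_{s,p}$ is \emph{scale invariant}: both $[u_\lambda]^p_{W^{s,p}}$ and $\int|u_\lambda|^p d_{\lambda\Omega}^{-sp}$ scale like $\lambda^{N-sp}$ under $x\mapsto\lambda x$, so $\mathfrak h_{s,p}(\lambda\Omega)=\mathfrak h_{s,p}(\Omega)$; there is no $\lambda^{sp}$ factor, so the dilation argument you invoke cannot produce the $n^{-sp}$ in \eqref{eq:onen}. Second, the single-component reduction would prove too much. You want to bound $P_s(E)/V_{s,\Omega}(E)\ge\min_j P_s(E_j)/V_{s,\Omega_j}(E_j)$, but the mediant inequality needs $P_s(E)\ge\sum_j P_s(E_j)$, which (as you note) is false because the cross-interaction terms are \emph{subtracted}, not added, when you split a set into disjoint pieces: $P_s(\cup_j E_j)=\sum_j P_s(E_j)-2\sum_{j\ne k}\iint_{E_j\times E_k}|x-y|^{-1-s}\di x\di y$. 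Your fallback claim $P_s(E)\ge P_s(E_j)$ for each $j$ (even if true) only gives $P_s(E)\ge\max_j P_s(E_j)$, which does not dominate $\min_j(P_s(E_j)/V_{s,\Omega_j}(E_j))\cdot\sum_j V_{s,\Omega_j}(E_j)$. And if the reduction did go through, combined with scale invariance it would yield $\mathfrak h_{s,1}(\Omega)\ge\mathfrak h_{s,1}(\mathbb B^1)=2^{2-s}/s$ \emph{without} the $n^{-s}$ factor, which is false: for $\Omega=(0,1)\cup(1,2)$, taking $E=\Omega$ gives $P_s(\Omega)/V_{s,\Omega}(\Omega)=2^{2-2s}/s<2^{2-s}/s$, consistent with \eqref{eq:onen} for $n=2$ but refuting the stronger claim. (Similarly, for $p>1$ the inequality $[u|_{\Omega_j}]^p_{W^{s,p}}\le[u]^p_{W^{s,p}}$ is not generally true, since the truncation creates new interaction terms.)

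What the paper does instead is entirely different and is where the factor $n^{sp}$ genuinely comes from: it computes the symmetric decreasing rearrangement of the weight $\delta_{s,\Omega}=\mathbf 1_\Omega/d_\Omega^s$. Because the $n$ components have \emph{equal length}, the superlevel sets $\{\delta_{s,\Omega}>t\}$ are made of $n$ congruent pieces, and the rearrangement collapses to the closed form $\delta_{s,\Omega}^\star(x)=n^s\,\mathbf 1_{\Omega^\star}(x)/|x|^s$ (generalizing \cref{res:rear}). For $p=1$ one then combines the fractional isoperimetric inequality $P_s(E)\ge P_s(E^\star)$ with Hardy--Littlewood, $V_{s,\Omega}(E)\le\int_{E^\star}\delta_{s,\Omega}^\star=n^s\int_{E^\star}|x|^{-s}\di x$, and reads off \eqref{eq:onen} from \cref{res:cheeger}. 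For $p\in(1,1/s)$ one uses $\int_\Omega|u|^p d_\Omega^{-sp}\di x\le n^{sp}\int_\R (u^\star)^p|x|^{-sp}\di x$ together with the Almgren--Lieb symmetrization inequality $[u^\star]\le[u]$ and \cref{res:fs}. The equality case when $\Omega$ is equivalent to an interval follows by testing with $\Omega^\star$ directly. The rearrangement of $d_\Omega^{-s}$, and the observation that equal lengths make it explicit, is the idea your proposal is missing.
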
 

We emphasize that the intervals in the first part of the statement of \cref{res:onen} do not have to be at a positive distance apart.
If, instead, one wants to keep track of the (minimal) distance between the intervals, then an alternative route is possible.
This is the content of our last main result (see also \cref{res:nsegments_delta} below).

\begin{theorem}
\label{res:nsegments}
Let $\Omega\subsetneq\R$ be the union of $n\in\mathbb N$ non-empty bounded open intervals, each with measure at most $\ell\in(0,\infty)$ and at distance at least $\delta\in(0,\infty)$ from the others.
For $s\in(0,1)$ and $p\in[1,\infty)$ such that $sp\le1$, it holds that
\begin{equation*}
\mathfrak h_{s,p}(\Omega)
\ge
\frac{2^{2-sp}}{sp}
\left(
1-
\left(\frac{\ell}{\ell+\delta}\right)^{sp}
\,
\right).
\end{equation*}
\end{theorem}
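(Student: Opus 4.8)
The plan is to localise a test function to the individual intervals, discard the interactions between distinct intervals, keep only each interval's interaction with the two adjacent collars of width $\delta$ that lie in $\R\setminus\Omega$, and reduce to a single scale-invariant estimate on one interval with a collar. In detail: since $[|u|]_{W^{s,p}(\R)}\le[u]_{W^{s,p}(\R)}$ while $\int_\Omega|u|^p\,d_\Omega^{-sp}\di x$ is unchanged under $u\mapsto|u|$, and since by \eqref{eq:h_sp} and homogeneity the claim is equivalent to
\[
[u]^p_{W^{s,p}(\R)}\ \ge\ \frac{2^{2-sp}}{sp}\Bigl(1-\Bigl(\frac{\ell}{\ell+\delta}\Bigr)^{sp}\Bigr)\int_\Omega\frac{|u|^p}{d_\Omega^{sp}}\di x\qquad\text{for all }u\in C^\infty_0(\Omega),
\]
I would fix such a $u$ with $u\ge 0$, write $\Omega=\bigcup_{k=1}^n I_k$ with $I_k=(a_k,b_k)$ of length at most $\ell$ and $d(I_k,I_j)\ge\delta$ for $k\ne j$, and set $u_k:=u\,\mathbf 1_{I_k}\in C^\infty_0(I_k)$. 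Two preliminary remarks. First, $d_\Omega\equiv d_{I_k}$ on $I_k$: the closed intervals $\overline{I_j}$ are pairwise disjoint, so $\partial\Omega=\bigcup_j\{a_j,b_j\}$, and for $x\in I_k$ and $j\ne k$ the set $\overline{I_j}$ lies beyond one endpoint of $I_k$ at distance at least $\delta$, whence $d(x,\partial I_j)\ge d(x,\partial I_k)+\delta$; therefore $d_\Omega(x)=d_{I_k}(x)$. Second, splitting the double integral defining $[u]^p_{W^{s,p}(\R)}$ according to the sets $I_1,\dots,I_n,\R\setminus\Omega$, discarding the nonnegative terms over pairs of distinct intervals, and using that the collar $J_k:=(a_k-\delta,a_k)\cup(b_k,b_k+\delta)$ is contained in $\R\setminus\Omega$, one obtains, with $I_k^\delta:=(a_k-\delta,b_k+\delta)$ and since $u_k\equiv 0$ on $I_k^\delta\setminus I_k=J_k$,
\[
[u]^p_{W^{s,p}(\R)}\ \ge\ \sum_{k=1}^n\Bigl([u_k]^p_{W^{s,p}(I_k)}+2\int_{I_k}\!\int_{J_k}\frac{|u_k(x)|^p}{|x-y|^{1+sp}}\di y\di x\Bigr)\ =\ \sum_{k=1}^n[u_k]^p_{W^{s,p}(I_k^\delta)}.
\]
Since $t\mapsto t/(t+\delta)$ is increasing and $d_\Omega|_{I_k}=d_{I_k}$, the theorem follows by summation once we prove the following single-interval estimate: for every open interval $I$ of length $L\le\ell$, with $\delta$-neighbourhood $I^\delta$, and every nonnegative $v\in C^\infty_0(I)$,
\[
[v]^p_{W^{s,p}(I^\delta)}\ \ge\ \frac{2^{2-sp}}{sp}\Bigl(1-\Bigl(\frac{L}{L+\delta}\Bigr)^{sp}\Bigr)\int_{I}\frac{|v(x)|^p}{d_{I}(x)^{sp}}\di x.
\]

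For the single-interval estimate, the scaling $v\mapsto v(L\,\cdot)$ (under which both sides carry the factor $L^{1-sp}$ and $\delta$ becomes $\delta/L$) lets us take $I=(0,1)$ and $I^\delta=(-\delta,1+\delta)$. Isolating the interaction of $v$ with the collars $(-\delta,0)$ and $(1,1+\delta)$ and evaluating the one-dimensional kernel integrals gives the identity
\[
[v]^p_{W^{s,p}((-\delta,1+\delta))}=[v]^p_{W^{s,p}((0,1))}+\frac{2}{sp}\int_0^1|v(x)|^p\Bigl(\frac{1}{x^{sp}}-\frac{1}{(x+\delta)^{sp}}+\frac{1}{(1-x)^{sp}}-\frac{1}{(1-x+\delta)^{sp}}\Bigr)\di x.
\]
Away from the endpoints, the bracketed weight already dominates $2^{1-sp}\bigl(1-(1/(1+\delta))^{sp}\bigr)d_{(0,1)}^{-sp}$, so there the collar term alone suffices; near the endpoints the bracket falls short of this, and the deficit must be covered by the interior seminorm $[v]^p_{W^{s,p}((0,1))}$—but in a manner finer than a Hardy bound $[v]^p_{W^{s,p}((0,1))}\gtrsim\int_0^1|v|^p d_{(0,1)}^{-sp}\di x$, which is false (its sharp constant is $0$, as a suitable ramp shows). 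For $p=1$ I would pass to superlevel sets through the fractional coarea formula~\eqref{eq:fracoarea} and \cref{res:cheeger}, reducing the estimate to the geometric inequality $[\mathbf 1_E]_{W^{s,1}((-\delta,1+\delta))}\ge\tfrac{2^{2-s}}{s}\bigl(1-(1/(1+\delta))^s\bigr)\int_E d_{(0,1)}^{-s}\di x$ for $E\subseteq(0,1)$ with $|E|>0$, which is sharp ($E=(0,1)$ being extremal as $\delta\to\infty$, in accordance with \cref{res:homo}) and can be verified by explicit computation. For general $p$ with $sp\le 1$ the compensation near the endpoints is instead produced by an explicit positive supersolution of the associated nonlocal eigenvalue problem on $I^\delta$, in the spirit of~\cite{BBZ23}; see \cref{res:nsegments_delta}.

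I expect the main obstacle to be exactly the single-interval estimate near $\partial I$ in the regime where $\delta$ is not small compared with $|I|$. There the two readily available lower bounds—the collar contribution, and the convex-domain ($p=1$) or half-line (general $p$) fractional Hardy inequality applied to the interior seminorm—are individually insufficient, the first near $\partial I$ and the second near the centre of $I$; moreover they cannot simply be patched, since they are global quantities, not pointwise comparable, and no fixed convex combination of them works. One thus has to control quantitatively how the interior seminorm compensates near $\partial I$—by the explicit set computation via \cref{res:cheeger} when $p=1$, and by a supersolution tailored to $I^\delta$ for general $p$. Everything else is bookkeeping.
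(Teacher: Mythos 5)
Your reduction of \cref{res:nsegments} to a single-interval estimate matches the paper's: they prove \cref{res:intfat} and then sum over the pairwise disjoint $\delta$-neighbourhoods $I_i^\delta$ in the proof of \cref{res:nsegments_delta}. Your exact collar identity for a single interval is also correct, and so is your diagnosis that the collar term alone cannot produce the factor $2^{1-sp}$ near $\partial I$ when $sp<1$: after extracting the factor $1-C_\delta^{sp}$, one is left with $\frac{1}{(x-a)^{sp}}+\frac{1}{(b-x)^{sp}}$, whose ratio to $d_I(x)^{-sp}$ has infimum $1$ over $I$ (attained in the limit $x\to\partial I$) and equals $2^{1-sp}$ only at the midpoint.

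The gap in your attempt is that the single-interval estimate with the claimed constant is never proved: the two routes you outline (coarea plus an unverified inequality for indicator functions when $p=1$, and an unspecified supersolution on $I^\delta$ for general $p$) are left as sketches, and you yourself flag this as the main obstacle. It is instructive to compare with what the paper actually does in \cref{res:intfat}: testing against $\mathbf 1_I$ via the discrete Picone inequality of~\cite{BBZ23}*{Lem.~2.4} discards the interior seminorm entirely (since $J_p(\mathbf 1_I(x)-\mathbf 1_I(y))=0$ for $x,y\in I$), keeping exactly the collar term you isolated, and then applies the pointwise bound $(x-a)^{sp}+(b-x)^{sp}\ge 2^{1-sp}(b-a)^{sp}$. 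For $sp\le1$ the map $r\mapsto r^{sp}$ is concave, so this is Jensen's inequality reversed: the left-hand side is \emph{maximized}, not minimized, at the midpoint with value $2^{1-sp}(b-a)^{sp}$, and tends to $(b-a)^{sp}$ as $x\to\partial I$, where the displayed inequality fails whenever $sp<1$. With the valid pointwise bound $\frac{1}{(x-a)^{sp}}+\frac{1}{(b-x)^{sp}}\ge d_I(x)^{-sp}$ the same Picone computation only yields $\frac{2}{sp}(1-C_\delta^{sp})$ for a single interval, which, notably, is the constant recorded in the statement of \cref{res:nsegments_delta} but not in \cref{res:intfat} nor in \cref{res:nsegments}. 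So the endpoint deficit you flagged appears to be a genuine feature that the paper's own argument does not resolve, and whether the missing factor $2^{1-sp}$ can be recovered by compensating with the interior seminorm as you envisage, or whether the constant in \cref{res:nsegments} must instead be lowered to $\frac{2}{sp}\bigl(1-(\ell/(\ell+\delta))^{sp}\bigr)$, is precisely what your proposal leaves unsettled.
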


It is worth observing that, in the case $sp<1$, due to \cref{res:spacco} below, the above inequality implies that
\begin{equation*}
\mathfrak h_{s,p}(\Omega)
\ge
\mathfrak h_{s,p}(\R\setminus\{0\})
\left(
1-
\left(\frac{\ell}{\ell+\delta}\right)^{sp}
\,
\right),
\end{equation*}
which can be more easily paired with~\eqref{eq:onen}.

We point out that inequality~\eqref{eq:frachardy}  was addressed in the non-convex case in~\cites{S23,CP24}.
The validity of~\eqref{eq:frachardy} for any open set $\Omega\subsetneq\R^N$  was achieved in~\cite{S23}*{Th.~1.10} with $s\in(0,1)$ and $p\in(1,\infty)$ such that $sp>N>1$, but with an implicit constant.
Recently, this result has been improved in~\cite{CP24}*{Th.~1.1}, where it was shown that
\begin{equation}
\label{eq:cintiprinari}
\mathfrak h_{s,p}(\Omega)
\ge
\mathfrak h_{s,p}(\R^N\setminus\{0\}) 
\end{equation} 
for any open set $\Omega\subsetneq\R^N$ with $s\in(0,1)$ and $p\in(1,\infty)$ such that $sp>N\ge1$.   
To the best of our knowledge, \cref{res:onen,res:nsegments} are the first results establishing inequality~\eqref{eq:frachardy} on a (non-empty) open non-convex set $\Omega\subsetneq\R$ for $s\in(0,1)$ and $p\in[1,\infty)$ such that $sp\le N=1$.
Furthermore,  \cref{res:onen} shows that inequality~\eqref{eq:cintiprinari} cannot hold in the case $p=1$ and $N=1$ for any $s\in(0,1)$.

\begin{remark}[Regional fractional Hardy inequality]
\label{rem:regional}
For completeness, we mention that a vast literature has been devoted to the study of the \emph{regional} version of inequality~\eqref{eq:frachardy},
\begin{equation}
\label{eq:frac_regional}
[u]^p_{W^{s,p}(\Omega)}
\ge 
C
\int_\Omega\frac{|u|^p}{d_\Omega^{sp}}\di x,
\quad
\text{for all}\
u\in C^\infty_0(\Omega).
\end{equation} 
Far from being complete, we refer  to~\cites{D04,DK22,DV14,DV15,EHV14,S23} and to the references therein for sufficient and/or necessary conditions on $s$, $p$ and $\Omega$ for~\eqref{eq:frac_regional} to hold.
The problem of determining the sharp constant in~\eqref{eq:frac_regional} has been addressed in~\cites{BD11,FS10,LS10}.
In contrast to~\eqref{eq:frachardy}, in this case the restriction $sp>1$ is needed, since~\eqref{eq:frac_regional} cannot hold for $sp\le1$ on bounded open Lipschitz sets $\Omega\subsetneq\R^N$, see~\cite{D04}*{Sec.~2}.
For further discussions, refer to~\cite{BBZ23}*{Rem.~1.1}.
\end{remark} 

\section{Preliminaries}

In this section, we collect some useful notation and preliminary results.

\subsection{Gamma and Beta functions}

For $a,b>0$, we let 
\begin{equation}
\label{eq:gamma_beta}
\Gamma(a)
=
\int_0^\infty t^{a-1}\,e^{-t}\di t
\quad
\text{and}
\quad
\mathrm B(a,b)
=
\int_0^1 t^{a-1}\,(1-t)^{b-1}\di t
\end{equation}
be the \emph{Gamma} and \emph{Beta functions}, respectively.
We recall that 
\begin{equation}
\label{eq:betag}
\mathrm B(a,b)
=
\frac{\Gamma(a)\,\Gamma(b)}{\Gamma(a+b)}
\quad
\text{for all}\
a,b>0.
\end{equation} 
In addition, for $x\in[0,1)$ and $a>0$ and $b\in\R$, we let
\begin{equation}
\label{eq:ibeta}
\mathrm B(x;a,b)
=
\int_0^x t^{a-1}\,(1-t)^{b-1}\di t
\end{equation} 
be the \emph{incomplete Beta function}. 
We recall that, given $a>0$,
\begin{equation}
\label{eq:ibeta_sym}
\mathrm B(x;a,-a)
=
\frac{x^a}{a\,(1-x)^a}
\quad
\text{for all}\
x\in[0,1).
\end{equation}
Formula~\eqref{eq:ibeta_sym} plainly follows from the representation
\begin{equation*}
\mathrm B(x;a,b)
=
\frac{x^a}{a}
\,
F(a,1-b;a+1;x),
\quad
\text{for all}\
x\in[0,1),
\end{equation*}
and the well-known properties of the \emph{hypergeometric function} $F$.

For each $k\in\mathbb N$, we let 
\begin{equation}
\label{eq:ballo}
\omega_k=\frac{\pi^{\frac{k-1}{2}}}{\Gamma\left(\frac{k-1}{2}+1\right)}
\end{equation}
be the $k$-dimensional Lebesgue measure of the unitary ball 
\begin{equation*}
\mathbb B^k
=
\{x\in\R^k : |x|<1\}
\end{equation*}
in $\R^k$.
Recall that $\mathscr H^{k-1}(\partial\mathbb B^k)=k\,\omega_k$, where $\mathscr H^k$ is the $k$-dimensional Hausdorff measure.

\subsection{Symmetric decreasing rearrangement}

Given a measurable set $E\subset\R^N$ such that $|E|\in[0,\infty)$, we let 
\begin{equation}
\label{eq:ballification}
E^\star
=
\left\{x\in\R^N : |x|<r_E\right\}
\end{equation}
where $r_E\in[0,\infty)$ is such that $|E^\star|=|E|$, and we set $\mathbf 1_E^\star=\mathbf 1_{E^\star}$.
The \emph{symmetric decreasing rearrangement} of a measurable function $u\colon\R^N\to\R$ \emph{vanishing at infinity}, i.e., such that 
\begin{equation*}
\left|\left\{x\in\R^N : |u|>t\right\}\right|<\infty
\quad
\text{for all}\
t>0,
\end{equation*}
is defined as 
\begin{equation}
\label{eq:sdr}
u^\star(x)
=
\int_0^\infty
\mathbf 1_{\{|u|>t\}}^\star(x)\di t
\quad
\text{for all}\
x\in\R^N.
\end{equation}
In particular, $u^\star$ is a lower semicountinuous, non-negative, radially symmetric, and non-in\-crea\-sing function such that 
$\{u^\star>t\}=\{|u|>t\}^\star$.
Moreover, if $\Phi\colon[0,\infty)\to[0,\infty)$ is a non-decreasing function, then
\begin{equation}
\label{eq:circostar}
(\Phi\circ|u|)^\star
=
\Phi\circ u^\star
\end{equation}
for any measurable function $u\colon\R^N\to\R$.
In particular, $(|u|^p)^\star=(u^\star)^p$ for all $p\in[1,\infty)$.

The \emph{Hardy--Littlewood inequality} states that, if $u,v\colon\R^N\to\R$ are two measurable functions vanishing at infinity, then 
\begin{equation}
\label{eq:hl}
\int_{\R^N}u(x)\,v(x)\di x
\le 
\int_{\R^N} u^\star(x)\,v^\star(x)\di x.
\end{equation}
For a detailed presentation of~\eqref{eq:sdr} and~\eqref{eq:hl}, we refer to~\cite{LL01}*{Ch.~3} for instance.

\subsection{Fractional setting}

For $s\in(0,1)$ and $p\in[1,\infty)$, we let $\mathcal W_0^{s,p}(\Omega)$ be the completion of $C^\infty_0(\Omega)$ with respect to the seminorm in~\eqref{eq:frac_seminorm} with $A=\R^N$.
Thanks to~\cite{BGV21}*{Th.~3.1},
in the case $sp<N$ (which is the only one we actually need), we can identify
\begin{equation*}
\mathcal W_0^{s,p}(\R^N)
=
\left\{
u\in L^{\frac{Np}{N-sp}}(\R^N)
:
[u]_{W^{s,p}(\R^N)}<\infty
\right\}.
\end{equation*}  
By~\cite{AL89}*{Th.~9.2}, if $u\in\mathcal W^{s,p}_0(\R^N)$, then the symmetric decreasing rearrangement~$u^\star$ of~$u$ defined in~\eqref{eq:sdr} is well defined and such that $u^\star\in \mathcal W^{s,p}_0(\R^N)$, with
\begin{equation} 
\label{eq:posze}
[u^\star]_{W^{s,p}(\R^N)}
\le 
[u]_{W^{s,p}(\R^N)}.
\end{equation}

As customary, given $s\in(0,1)$ and a measurable set $E\subseteq\R^N$, we let 
\begin{equation*}
P_s(E)=[\mathbf 1_E]_{W^{s,1}(\R^N)}
\end{equation*}  
be the \emph{fractional perimeter} of $E$.
We recall that, given two measurable sets $E,F\subseteq\R^N$, the fractional perimeter satisfies \begin{equation}
\label{eq:submodularity}
P_s(E\cap F)
+
P_s(E\cup F)
\le 
P_s(E)
+
P_s(F),
\end{equation}
see~\cite{BS24}*{Sec.~2.5} and the references therein.
Thus, since $P_s(F^c)=P_s(F)$, we also have that
\begin{equation}
\label{eq:perdiff}
P_s(E\setminus F)
=
P_s(E\cap F^c)
\le 
P_s(E)
+
P_s(F^c)
=
P_s(E)
+
P_s(F).
\end{equation}
Moreover, we recall that, if $|E|<\infty$ and $F\subseteq\R^N$ is convex, then
\begin{equation}
\label{eq:interconvex}
P_s(E\cap F)\le P_s(E),
\end{equation} 
see~\cite{BS24}*{Th.~2.29} for instance.

The \emph{fractional isoperimetric inequality} (e.g., see~\cite{G20} and the references therein) states that, for all $s\in(0,1)$, if $E\subseteq\R^N$  is such that $|E|\in(0,\infty)$, then
\begin{equation}
\label{eq:fraciso}
\frac{P_s(E)}{|E|^{1-\frac sN}}
\ge
\frac{P_s(\mathbb B^N)}{|\mathbb B^N|^{1-\frac sN}}. 
\end{equation}
We recall that, thanks to~\cite{G20}*{Prop.~1.1}, we have that
\begin{equation}
\label{eq:garofalo}
P_s(\mathbb B^N)
=
\omega_N^{1-\frac sN}
\,
\frac{N\,\pi^{\frac{N+s}2}\,\Gamma(1-s)}{\frac s2\,\Gamma\left(\frac N2+1\right)^{\frac{s}N}\,\Gamma\left(1-\frac s2\right)\,\Gamma\left(\frac{N-s}{2}+1\right)} 
\end{equation}
for all $s\in(0,1)$.
In particular, for $N=1$, we get that
\begin{equation}
\label{eq:persseg}
P_s((a,b))
=
\frac{4}{s(1-s)}
\,
(b-a)^{1-s}
\end{equation}
whenever $a,b\in\R$ are such that $a<b$.

\subsection{Properties of \texorpdfstring{$C_{N,q}$}{C-N,q} and \texorpdfstring{$\Lambda_{s,p}$}{Lambda-s,p}}

Concerning the constant in~\eqref{eq:C_Nsp}, the following result basically reformulates~\cite{BBZ23}*{Lem.~B.1} and~\cite{FS10}*{Lem.~2.4} with minor refinements. 

\begin{lemma}
\label{res:bambi}
If $N\ge1$ and $q\in[0,\infty)$, then the constant in~\eqref{eq:C_Nsp} rewrites as
\begin{equation}
\label{eq:bambi}
C_{N,q}
=
\frac12\int_{\mathbb S^{N-1}}|x_N|^q\di\mathscr H^{N-1}(x)
=
\pi^{\frac{N-1}2}
\,
\frac{\Gamma\left(\frac{q+1}{2}\right)}{\Gamma\left(\frac{N+q}{2}\right)}.
\end{equation}
In particular, for each $N\ge1$, the function $q\mapsto C_{N,q}$ is continuous for $q\in[0,\infty)$.
\end{lemma}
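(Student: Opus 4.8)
The plan is to reduce both quantities in~\eqref{eq:bambi} to the same trigonometric integral and then to rewrite it via the Beta--Gamma identities. The case $N=1$ is immediate: $\mathbb S^0=\{-1,1\}$, so $|x_N|^q\equiv1$ on $\mathbb S^0$ and $\frac12\int_{\mathbb S^0}|x_N|^q\di\mathscr H^0=\frac12(1+1)=1=C_{1,q}$, while $\pi^0\,\Gamma\big(\tfrac{q+1}{2}\big)\big/\Gamma\big(\tfrac{1+q}{2}\big)=1$, and continuity is trivial. So from now on $N\ge2$.

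First I would rewrite the defining integral in~\eqref{eq:C_Nsp} with the substitution $t=\tan\phi$, which gives $1+t^2=\sec^2\phi$, $\di t=\sec^2\phi\di\phi$, and hence
\begin{equation*}
\int_0^\infty\frac{t^{N-2}}{(1+t^2)^{\frac{N+q}{2}}}\di t
=
\int_0^{\pi/2}\sin^{N-2}\phi\,\cos^q\phi\di\phi
=
\frac12\,\mathrm B\Big(\tfrac{N-1}{2},\tfrac{q+1}{2}\Big),
\end{equation*}
the last equality being the standard evaluation of this Wallis-type integral (itself a consequence of~\eqref{eq:gamma_beta} via the change of variable $u=\sin^2\phi$). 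On the other hand, I would slice $\mathbb S^{N-1}$ along the $x_N$-axis by writing $x=(\sin\phi\,\omega,\cos\phi)$ with $\omega\in\mathbb S^{N-2}$ and $\phi\in[0,\pi]$, so that $\di\mathscr H^{N-1}=\sin^{N-2}\phi\di\phi\di\mathscr H^{N-2}(\omega)$; using $\mathscr H^{N-2}(\mathbb S^{N-2})=(N-1)\,\omega_{N-1}$ together with the symmetry $\phi\mapsto\pi-\phi$, I would obtain
\begin{equation*}
\frac12\int_{\mathbb S^{N-1}}|x_N|^q\di\mathscr H^{N-1}
=
(N-1)\,\omega_{N-1}\int_0^{\pi/2}\sin^{N-2}\phi\,\cos^q\phi\di\phi
=
\frac{(N-1)\,\omega_{N-1}}{2}\,\mathrm B\Big(\tfrac{N-1}{2},\tfrac{q+1}{2}\Big).
\end{equation*}
Comparing the two displays then gives the first equality in~\eqref{eq:bambi}, since the right-hand side of~\eqref{eq:C_Nsp} is exactly $(N-1)\,\omega_{N-1}$ times the first integral.

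To obtain the closed form, I would convert the common value $\frac{(N-1)\,\omega_{N-1}}{2}\,\mathrm B\big(\tfrac{N-1}{2},\tfrac{q+1}{2}\big)$ to Gamma functions via~\eqref{eq:betag}: writing $\mathrm B\big(\tfrac{N-1}{2},\tfrac{q+1}{2}\big)=\Gamma\big(\tfrac{N-1}{2}\big)\Gamma\big(\tfrac{q+1}{2}\big)\big/\Gamma\big(\tfrac{N+q}{2}\big)$ and using the functional equation $(N-1)\,\Gamma\big(\tfrac{N-1}{2}\big)=2\,\Gamma\big(\tfrac{N+1}{2}\big)$ together with the value of $\omega_{N-1}$ from~\eqref{eq:ballo}, the prefactor $\frac{(N-1)\,\omega_{N-1}}{2}\,\Gamma\big(\tfrac{N-1}{2}\big)$ collapses to $\pi^{\frac{N-1}{2}}$, which yields the second equality in~\eqref{eq:bambi}. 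Continuity of $q\mapsto C_{N,q}$ on $[0,\infty)$ then follows at once, because $\Gamma$ is continuous and strictly positive on $(0,\infty)$ while $\tfrac{q+1}{2}\ge\tfrac12$ and $\tfrac{N+q}{2}\ge\tfrac N2$ remain in $(0,\infty)$. The whole argument is essentially bookkeeping and I foresee no real obstacle; the only points needing care are the factor $\frac12$ produced by the two hemispheres (i.e., the $\phi\mapsto\pi-\phi$ symmetry) when identifying the spherical integral, the cancellation of $\omega_{N-1}$ through the Gamma identity above, and treating $N=1$ separately since the integral representation in~\eqref{eq:C_Nsp} is stated only for $N\ge2$.
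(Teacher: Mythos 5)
Your proof is correct, and it takes a more self-contained route than the paper at two points. First, you prove the identity $C_{N,q}=\frac12\int_{\mathbb S^{N-1}}|x_N|^q\di\mathscr H^{N-1}$ directly by slicing $\mathbb S^{N-1}$ along the $x_N$-axis, whereas the paper simply cites \cite{BBZ23}*{Lem.~B.1} for it; your spherical parametrization $x=(\sin\phi\,\omega,\cos\phi)$, the factor $\mathscr H^{N-2}(\mathbb S^{N-2})=(N-1)\omega_{N-1}$, and the $\phi\mapsto\pi-\phi$ symmetry are all handled correctly. Second, to reach the Beta function you use the substitution $t=\tan\phi$, which lands both quantities simultaneously on the Wallis integral $\int_0^{\pi/2}\sin^{N-2}\phi\cos^q\phi\di\phi=\tfrac12\mathrm B\bigl(\tfrac{N-1}{2},\tfrac{q+1}{2}\bigr)$; the paper instead rewrites the standard Beta integral on $(0,1)$ as $\int_0^\infty t^{a-1}(1+t)^{-a-b}\di t$ and then sets $t\to t^2$. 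Both routes meet at the same $\mathrm B\bigl(\tfrac{N-1}{2},\tfrac{q+1}{2}\bigr)$, and from there the Gamma bookkeeping (via $(N-1)\Gamma(\tfrac{N-1}{2})=2\Gamma(\tfrac{N+1}{2})$ and $\omega_{N-1}=\pi^{\frac{N-1}{2}}/\Gamma(\tfrac{N+1}{2})$) is the same. One small remark: you correctly use the standard formula $\omega_{N-1}=\pi^{\frac{N-1}{2}}/\Gamma\bigl(\tfrac{N-1}{2}+1\bigr)$; the paper's displayed formula \eqref{eq:ballo} appears to have a shifted exponent and argument, so be aware that you should not read $\omega_{N-1}$ off it literally.
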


\begin{proof}
The case $N=1$ is trivial, so we assume $N\ge2$.

The first equality in~\eqref{eq:bambi} has been already proved in~\cite{BBZ23}*{Lem.~B.1}: one just needs to follow the very same argument letting $sp=q$.
Note that the proof of~\cite{BBZ23}*{Lem.~B.1} also works for $q=sp\in[0,1]$.
We leave the simple check to the reader.

For the second inequality in~\eqref{eq:bambi}, it is enough to observe that simple changes of variables yield the formulas (recall the definition in~\eqref{eq:gamma_beta})
\begin{equation*}
\mathrm B(a,b)
=
\int_0^\infty \frac{t^{a-1}}{(1+t)^{a+b}}\di t
=
2\int_0^\infty\frac{t^{2a-1}}{(1+t^2)^{a+b}}\di t,
\quad
\text{for}\ a,b>0.
\end{equation*}
Thus, for $a=\frac{N-1}{2}$ and $b=\frac{q+1}{2}$, and owing to~\eqref{eq:betag}, we can rewrite 
\begin{equation*}
\int_0^\infty\frac{t^{N-2}}{(1+t^2)^{\frac{N+q}{2}}}\di t
=
\frac12\,\mathrm B\left(\frac{N-1}{2},\frac{q+1}2\right)
=
\frac{\Gamma\left(\frac{N-1}2\right)\,\Gamma\left(\frac{q+1}{2}\right)}{2\,\Gamma\left(\frac{N+q}{2}\right)}
.
\end{equation*}
Hence, recalling~\eqref{eq:ballo}, we easily get that 
\begin{equation*}
C_{N,q}
=
(N-1)\,\omega_{N-1}
\,
\frac{\Gamma\left(\frac{N-1}2\right)\,\Gamma\left(\frac{q+1}{2}\right)}{2\,\Gamma\left(\frac{N+q}{2}\right)}
=
\pi^{\frac{N-1}{2}}\,
\frac{\Gamma\left(\frac{q+1}{2}\right)}{\Gamma\left(\frac{N+q}{2}\right)}.
\end{equation*}
The last part of the statement follows either by the Dominated Convergence Theorem or by the known continuity properties of the Gamma function.
\end{proof}

Concerning the constant in~\eqref{eq:Lambda_sp}, we can prove the following result.

\begin{lemma}
\label{res:lambada}
For $s\in(0,1)$ and $p\in(1,\infty)$ such that $sp<1$, it holds that
\begin{equation}
\label{eq:Lambda_compare}
\Lambda_{s,p}<\Lambda_{sp,1}=\frac{4}{sp}.
\end{equation}
As a consequence, for each $s\in(0,1)$ it holds that
\begin{equation}
\label{eq:Lambda_lim1}
\lim_{p\to1^+}
\Lambda_{s,p}
=
\Lambda_{s,1}.
\end{equation}
\end{lemma}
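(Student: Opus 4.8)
The plan is to compare the two integral expressions term by term. Recall from~\eqref{eq:Lambda_sp} that
\[
\Lambda_{s,p}
=
2\int_0^1 \frac{\big|1-t^{\frac{sp-1}{p}}\big|^p}{(1-t)^{1+sp}}\di t
+
\dfrac{2}{sp},
\qquad
\Lambda_{sp,1}
=
2\int_0^1 \frac{\big|1-t^{sp-1}\big|}{(1-t)^{1+sp}}\di t
+
\dfrac{2}{sp},
\]
the latter obtained by setting $s\rightsquigarrow sp$ and $p\rightsquigarrow 1$ in~\eqref{eq:Lambda_sp}. Since the two additive terms $\tfrac{2}{sp}$ coincide, inequality~\eqref{eq:Lambda_compare} reduces to showing that the integrand of $\Lambda_{s,p}$ is pointwise strictly dominated (for $t$ away from $1$) by that of $\Lambda_{sp,1}$, i.e. that
\[
\big|1-t^{\frac{sp-1}{p}}\big|^p
<
\big|1-t^{sp-1}\big|
\qquad
\text{for }t\in(0,1),
\]
after which integration against the positive weight $(1-t)^{-1-sp}$ gives the claim; one must separately check that both integrals converge, which they do because near $t=1$ the numerators behave like $|1-t|^p$ and $|1-t|$ respectively, and $sp<1$ ensures $1+sp-p<1$ and $1+sp-1=sp<1$. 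The evaluation $\Lambda_{sp,1}=\tfrac{4}{sp}$ then follows from \cref{res:hs1} (the formula $\Lambda_{s,1}=4/s$ in~\eqref{eq:hs1_H}) applied with exponent $sp$, or can be verified directly by splitting $\int_0^1$ at the point where $t^{sp-1}=1$ is irrelevant (here $t^{sp-1}\ge 1$ throughout $(0,1]$ since $sp-1<0$) and computing $\int_0^1 (t^{sp-1}-1)(1-t)^{-1-sp}\di t$ via the substitution leading to an incomplete Beta function, using~\eqref{eq:ibeta_sym}.

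The heart of the matter is therefore the pointwise inequality. Write $\alpha = 1-sp\in(0,1)$, so $\tfrac{sp-1}{p}=-\tfrac{\alpha}{p}$ and $sp-1=-\alpha$; setting $t=e^{-x}$ with $x>0$, the claim becomes
\[
\big(e^{\alpha x/p}-1\big)^p < e^{\alpha x}-1
\qquad\text{for all }x>0,\ p>1,\ \alpha\in(0,1).
\]
Dividing both sides by $e^{\alpha x}-1>0$ and substituting $y=e^{\alpha x/p}-1>0$, so that $e^{\alpha x}=(1+y)^p$, this is equivalent to $y^p < (1+y)^p-1$ for all $y>0$ and $p>1$. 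This last inequality is elementary: the function $y\mapsto (1+y)^p - y^p$ has derivative $p\big((1+y)^{p-1}-y^{p-1}\big)>0$ for $p>1$ and $y>0$, and equals $1$ at $y=0$, hence $(1+y)^p-y^p>1$ strictly for $y>0$. (Alternatively it is the strict superadditivity $a^p+b^p<(a+b)^p$ for $p>1$ with $a=1$, $b=y$.) Thus the pointwise strict inequality holds on all of $(0,1)$, and since the weight $(1-t)^{-1-sp}$ is strictly positive and locally integrable against both numerators, integrating yields $\Lambda_{s,p}<\Lambda_{sp,1}$; this proves~\eqref{eq:Lambda_compare}.

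For the limit~\eqref{eq:Lambda_lim1}, fix $s\in(0,1)$. As $p\to1^+$ the exponent $\tfrac{sp-1}{p}\to s-1$, so the integrand converges pointwise to $\tfrac{|1-t^{s-1}|}{(1-t)^{1+s}}$, the integrand of $\Lambda_{s,1}$. To pass to the limit under the integral sign I would invoke dominated convergence: for $p$ in a neighbourhood $(1,1+\varepsilon)$ with $s(1+\varepsilon)<1$ still, the comparison just established gives $\big|1-t^{\frac{sp-1}{p}}\big|^p(1-t)^{-1-sp}\le \big|1-t^{sp-1}\big|(1-t)^{-1-sp}$, and since $sp$ ranges in a compact subinterval of $(0,1)$ one checks that the right-hand side is bounded above by an $L^1(0,1)$ function independent of $p$ — near $t=0$ by $C\,t^{sp-1}(1-t)^{-1-sp}\le C' t^{s(1+\varepsilon)-1}$ and near $t=1$ by $C\,(1-t)^{-sp}\le C(1-t)^{-s(1+\varepsilon)}$, both integrable. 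Hence $\int_0^1\!\to\!\int_0^1$ of the limit, and adding the constant term $\tfrac{2}{sp}\to\tfrac{2}{s}$ gives $\Lambda_{s,p}\to\Lambda_{s,1}$. The main obstacle is purely technical — producing the uniform integrable majorant near the two endpoints — and it is handled exactly by the pointwise bound from the first part.
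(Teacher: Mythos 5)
Your pointwise comparison is correct and is mathematically the same inequality the paper uses. Both reduce to the strict superadditivity of $t\mapsto t^p$ for $p>1$: the paper factors the numerator as $t^{sp-1}\bigl(1-t^{(1-sp)/p}\bigr)^p$ and applies $(1-u)^p<1-u^p$ for $u\in(0,1)$, while you substitute $t=e^{-x}$, $y=e^{\alpha x/p}-1$ and arrive at $y^p+1<(1+y)^p$ for $y>0$. These are the same elementary fact applied at different normalizations; both deliver the pointwise bound $\bigl|1-t^{(sp-1)/p}\bigr|^p<\bigl|1-t^{sp-1}\bigr|$ on $(0,1)$.

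Two remarks on where your route departs from the paper's. First, your primary suggestion for evaluating $\Lambda_{sp,1}=4/(sp)$ --- invoking the formula $\Lambda_{s,1}=4/s$ from \cref{res:hs1} applied with exponent $sp$ --- is circular in the paper's logical order: the proof of \cref{res:hs1} relies on \cref{res:comput_compar}, which in turn relies on the present lemma. Your fallback, the direct computation of $\int_0^1 (t^{sp-1}-1)(1-t)^{-1-sp}\di t=\tfrac{1}{sp}$ via the incomplete Beta function~\eqref{eq:ibeta_sym}, is exactly what the paper does and is the route you must take. Second, for the limit~\eqref{eq:Lambda_lim1} the paper avoids dominated convergence altogether: Fatou gives $\Lambda_{s,1}\le\liminf_{p\to1^+}\Lambda_{s,p}$, and the already-established strict bound $\Lambda_{s,p}<4/(sp)\to4/s=\Lambda_{s,1}$ gives the matching $\limsup$, squeezing the limit. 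This is slicker than constructing a uniform majorant. Your dominated-convergence argument does work, but your written majorant near $t=0$ goes the wrong way: for $t\in(0,1)$ and $p$ ranging over $(1,1+\varepsilon)$, the exponent $sp-1$ lies in $(s-1,\,s(1+\varepsilon)-1)\subset(-1,0)$, and since $t^a$ increases as $a$ decreases on $(0,1)$, the correct uniform bound is $t^{sp-1}\le t^{s-1}$ (the most singular case $p=1$), not $t^{sp-1}\le C't^{s(1+\varepsilon)-1}$ as you wrote. Fortunately $t^{s-1}$ is still integrable near $0$ because $s-1>-1$, so the conclusion survives once the bound is corrected; the near-$t=1$ bound $(1-t)^{-sp}\le(1-t)^{-s(1+\varepsilon)}$ is fine.
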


\begin{proof}
Since $(1-t)^p<1-t^p$ for all $t\in(0,1)$,
we can estimate
\begin{equation*}
\int_0^1\frac{\big|1-t^{\frac{sp-1}{p}}\big|^p}{(1-t)^{1+sp}}\di t
=
\int_0^1\frac{t^{sp-1}\,\big(1-t^{\frac{1-sp}{p}}\big)^p}{(1-t)^{1+sp}}\di t
<
\int_0^1\frac{t^{sp-1}\,\big(1-t^{1-sp}\big)}{(1-t)^{1+sp}}\di t.
\end{equation*}
Letting $q=sp\in(0,1)$, we can write
\begin{equation*}
\begin{split}
\int_0^1\frac{t^{q-1}\,\big(1-t^{1-q}\big)}{(1-t)^{1+q}}\di t
&=
\lim_{\varepsilon\to0^+}
\left(
\mathrm B(1-\varepsilon;q,-q)
-
\int_0^{1-\varepsilon}\frac{1}{(1-t)^{1+q}}\di t
\right)
\\
&=
\frac1q
+
\lim_{\varepsilon\to0^+}
\left(
\mathrm B(1-\varepsilon;q,-q)
-
\frac{\varepsilon^{-q}}{q}
\right),
\end{split}
\end{equation*}
where, for $\varepsilon\in(0,1)$, $\mathrm B(1-\varepsilon;q,-q)$ is as in~\eqref{eq:ibeta}.
In view of~\eqref{eq:ibeta_sym}, we have  
\begin{equation*}
\mathrm B(1-\varepsilon;q,-q)
=
\frac{(1-\varepsilon)^q}{q\,\varepsilon^q}
\end{equation*}
and thus we can easily compute
\begin{equation*}
\lim_{\varepsilon\to0^+}
\left(
\mathrm B(1-\varepsilon;q,-q)
-
\frac{\varepsilon^{-q}}{q}
\right)
=
\lim_{\varepsilon\to0^+}
\left(
\frac{(1-\varepsilon)^q}{q\,\varepsilon^q}
-
\frac{\varepsilon^{-q}}{q}
\right)
=0.
\end{equation*}
We hence get that 
\begin{equation*}
\int_0^1\frac{t^{q-1}\,\big(1-t^{1-q}\big)}{(1-t)^{1+q}}\di t
=
\frac1q
\end{equation*}
and the validity of~\eqref{eq:Lambda_compare} immediately follows by recalling the definition in~\eqref{eq:Lambda_sp}.
By Fatou's Lemma and by~\eqref{eq:Lambda_compare}, we infer that
\begin{equation*}
\Lambda_{s,1}
\le 
\liminf_{p\to1^+}
\Lambda_{s,p}
\le
\limsup_{p\to1^+}
\Lambda_{s,p}
\le 
\limsup_{p\to1^+}
\Lambda_{sp,1}
=
\lim_{p\to1^+}
\frac{4}{sp}
=
\frac4s=\Lambda_{s,1},
\end{equation*}
proving~\eqref{eq:Lambda_lim1} and concluding the proof.
\end{proof}

\subsection{Rearrangements on the real line}

Given $s\in(0,1)$ and a (non-empty) open set $\Omega\subsetneq\R^N$, we let $\delta_{s,\Omega}\colon\R^N\to[0,\infty]$ be given by 
\begin{equation}
\label{eq:delta_func}
\delta_{s,\Omega}(x)
=
\frac{\mathbf 1_\Omega(x)}{d_\Omega^s(x)}
\quad
\text{for all}\ x\in\R^N,
\end{equation}
where $d_\Omega$ is the distance function from $\partial\Omega$ defined in~\eqref{eq:dist_func}.

In the following result, we study the symmetric decreasing rearrangement of the function in~\eqref{eq:delta_func} in the case $N=1$ and $\Omega$ is a finite disjoint union of bounded open intervals.

\begin{proposition}
\label{res:nomega}
Let $N=1$, $s\in(0,1)$ and $\Omega\subsetneq\R$ be the disjoint union of $n\in\mathbb N$ (non-empty) bounded open intervals $\{I_k:k=1,\dots,n\}$ and define
\begin{equation*}
r=\frac12\max\limits_{k=1,\dots,n}|I_k|.
\end{equation*}
Then, the function $\delta_{s,\Omega}\colon\R\to[0,\infty]$, defined in~\eqref{eq:delta_func}, satisfies
\begin{equation}
\label{eq:mariemonti}
|\{\delta_{s,I}>t\}|
\le
2nr\,\mathbf 1_{(0,r^{-s}]}(t)
+
2nt^{-\frac1s}\,\mathbf 1_{(r^{-s},\infty)}(t),
\quad
\text{for all}\ t>0,
\end{equation} 
and thus its symmetric decreasing rearrangement $\delta^\star_{s,I}\colon\R\to[0,\infty]$ satisfies
\begin{equation}
\label{eq:nomega}
\delta_{s,\Omega}^\star(x)
\le 
\frac{\mathbf 1_{(-rn,rn)}(x)}{|x|^s},
\quad
\text{for all}\ x\in\R.
\end{equation}
Moreover, if $|I_k|=2r$ for all $k=1,\dots,n$, then~\eqref{eq:mariemonti} and~\eqref{eq:nomega} are both equalities. 
\end{proposition}

In order to prove \cref{res:nomega}, we start with the case of a single interval.

\begin{lemma}
\label{res:rear}
Let $s\in(0,1)$, $N=1$ and $I\subsetneq\R$ be a non-empty bounded open interval.
Then, the function 
$\delta_{s,I}\colon\R\to[0,\infty]$, defined in~\eqref{eq:delta_func},
satisfies
\begin{equation}
\label{eq:rear_meas}
|\{\delta_{s,I}>t\}|
=
|I|\,\mathbf 1_{(0,2^s|I|^{-s}]}(t)
+
2\,t^{-\frac1s}
\,
\mathbf 1_{(2^s|I|^{-s},\infty)}(t),
\quad
\text{for all}\
t>0,
\end{equation}
and thus its symmetric decreasing rearrangement $\delta_{s,I}^\star\colon\R\to[0,\infty]$ is given by
\begin{equation}
\label{eq:rear}
\delta_{s,I}^\star(x)
=
\frac{\mathbf 1_{I^\star}(x)}{|x|^s},
\quad
\text{for all}\
x\in\R.
\end{equation}
\end{lemma}

\begin{proof}
Let $r\in(0,\infty)$ and $c\in\R$ be such that $I=(c-r,c+r)$, so that $d_I(x)=r-|x-c|$ for all $x\in I$.
Since $\delta_{s,I}(x)>t$ if and only if $|x-c|>r-t^{-\frac1s}$ for all $x\in I$, we have
\begin{equation*}
\{\delta_{s,I}>t\}
=
\begin{cases}
I
&
0<t\le r^{-s},
\\[1ex]
\left(c-r,c-r+t^{-\frac1s}\right)
\cup
\left(c+r-t^{-\frac1s},c+r\right)
&
t>r^{-s},
\end{cases}
\end{equation*}
from which we readily deduce~\eqref{eq:rear_meas}.
We hence get that
\begin{equation*}
\{\delta_{s,I}>t\}^\star
=
\begin{cases}
I^\star
&
0<t\le r^{-s},
\\[1ex]
\left(-t^{-\frac1s},t^{-\frac1s}\right)
&
t>r^{-s},
\end{cases}
\end{equation*}
and thus, recalling the definition in~\eqref{eq:sdr}, we can compute
\begin{equation}
\label{eq:aliante}
\begin{split}
\delta_{s,I}^\star(x)
&=
\int_0^\infty\mathbf 1_{\{\delta_{s,I}>t\}^\star}(x)\di t
=
\int_0^{r^{-s}}
\mathbf 1_{I^\star}(x)\di t
+
\int_{r^{-s}}^\infty
\mathbf 1_{\left(-t^{-1/s},t^{-1/s}\right)}(x)\di t
\\
&=
r^{-s}\,
\mathbf 1_{I^\star}(x)
+
\mathbf 1_{I^\star}(x)
\int_{r^{-s}}^\infty
\mathbf 1_{\left(0,|x|^{-s}\right)}(t)
\di t=
|x|^{-s}\,\mathbf 1_{I^\star}(x)
\end{split}
\end{equation}
for all $x\in\R$, proving~\eqref{eq:rear} and concluding the proof.
\end{proof}

\begin{remark}
\label{rem:reareq}
With the same notation of \cref{res:rear}, if $I,J\subseteq\R$ are two non-empty bounded open intervals such that $|I|=|J|$, then $\delta_{s,I}^\star=\delta_{s,J}^\star$ for all $s\in(0,1)$.
\end{remark}

\begin{remark}
\label{rem:rear2}
\cref{res:rear} also holds for $N\ge2$, but the formula corresponding to~\eqref{eq:rear} has a completely different appearance. 
More precisely, taking $\Omega=\mathbb B^N$ for simplicity in~\eqref{eq:delta_func},
analogous computations yields that
\begin{equation*}
\delta_{s,\mathbb B^N}^\star(x)
=
\mathbf 1_{\mathbb B^N}(x)
\,
\left(1-\left(1-|x|^N\right)^\frac1N\right)^{-s}
\end{equation*}
for all $x\in\R^N$.
We omit the detailed derivation.
\end{remark}

\begin{proof}[Proof of \cref{res:nomega}]
As the $I_k$'s are disjoint, we can write 
\begin{equation}
\label{eq:deco}
\delta_{s,\Omega}(x)
=
\sum_{k=1}^n
\delta_{s,I_k}(x)
\end{equation}
for all $x\in\R$.
The decomposition in~\eqref{eq:deco} implies that
\begin{equation*}
\{\delta_{s,\Omega}>t\}
=
\bigcup_{k=1}^n
\{\delta_{s,I_k}>t\}
\end{equation*}
for all $t>0$ with disjoint union.
By \cref{res:rear}, we hence get that
\begin{equation}
\label{eq:mozzarella}
|\{\delta_{s,\Omega}>t\}|
=
\sum_{k=1}^n|\{\delta_{s,I_k}>t\}|
=
\sum_{k=1}^n
|I_k|\,\mathbf 1_{\left(0,2^s|I_k|^{-s}\right]}(t)
+
2t^{-\frac1s}
\,
\mathbf 1_{\left(2^s|I_k|^{-s},\infty\right)}(t)
\end{equation}
for all $t>0$.
Now let us set $r_k=|I_k|/2$ for all $k=1,\dots,n$ for notational convenience.
Without loss of generality, we can assume that $0< r_1\le r_2\le\dots\le r_n<\infty$.
Furthermore, let us set $r_0=0$ and $r_{n+1}=\infty$ and adopt the notation $r^{-s}_0=\infty$ and $r_{n+1}^{-s}=0$.
With this conventions in force, we can write
\begin{equation*}
\mathbf 1_{\left(0,r_k^{-s}\right]}
=
\sum_{j=0}^{n-k}
\mathbf 1_{\left(r_{n+1-j}^{-s},r_{n-j}^{-s}\right]}
\quad
\text{and}
\quad
\mathbf 1_{\left(r_k^{-s},\infty\right)}
=
\sum_{j=0}^{n-1}
\mathbf 1_{\left(r_{k-j}^{-s},r_{k-j-1}^{-s}\right]}
\end{equation*}
for all $k=1,\dots,n$.
Therefore, on the one hand, we get
\begin{equation}
\label{eq:kiwi1}
\sum_{k=1}^n
r_k\,\mathbf 1_{\left(0,r_k^{-s}\right]}
=
\sum_{j=0}^{n-k}
\mathbf 1_{\left(r_{n+1-j}^{-s},r_{n-j}^{-s}\right]}
\left(
\sum_{k=1}^{n-j}r_k
\right)
\end{equation}
for all $k=1,\dots,n$, while, on the other hand, we can compute
\begin{equation}
\label{eq:kiwi2}
\begin{split}
\sum_{k=1}^n
\mathbf 1_{\left(r_k^{-s},\infty\right)}
&=
\sum_{k=1}^n
\sum_{j=0}^{n-1}
\mathbf 1_{\left(r_{k-j}^{-s},r_{k-j-1}^{-s}\right]}
=
\sum_{j=0}^{n-1}
\sum_{k=j+1}^n
\mathbf 1_{\left(r_{k-j}^{-s},r_{k-j-1}^{-s}\right]}
=
\sum_{j=0}^{n-1}
\sum_{h=1}^{n-j}
\mathbf 1_{\left(r_{h}^{-s},r_{h-1}^{-s}\right]}
\\
&=
\sum_{h=1}^{n}
\sum_{j=0}^{n-h}
\mathbf 1_{\left(r_{h}^{-s},r_{h-1}^{-s}\right]}
=
\sum_{h=1}^{n}
(n-h+1)\,
\mathbf 1_{\left(r_{h}^{-s},r_{h-1}^{-s}\right]}
=
\sum_{j=0}^{n}
j\,
\mathbf 1_{\left(r_{n+1-j}^{-s},r_{n-j}^{-s}\right]}
\end{split}
\end{equation}
for all $k=1,\dots,n$.
By combining~\eqref{eq:kiwi1} and~\eqref{eq:kiwi2} with~\eqref{eq:mozzarella}, we conclude that 
\begin{equation}
\label{eq:devilmeas}
|\{\delta_{s,\Omega}>t\}|
=
2\sum_{j=0}^n
\mathbf 1_{\left(r^{-s}_{n+1-j},r^{-s}_{n-j}\right]}(t)
\left(j\,t^{-\frac1s}
+
\sum_{k=1}^{n-j} r_k
\right)
\end{equation}
for all $t>0$.
Note that, for $r_{n+1-j}^{-s}<t\le r_{n-j}^{-s}$, we have $r_{n-j}\le t^{-\frac1s}<r_{n+1-j}$, and so 
\begin{equation*}
j\,t^{-\frac1s}
+
\sum_{k=1}^{n-j} r_k
\le 
j\,t^{-\frac1s}
+
(n-j)\, r_{n-j}
\le 
j\,t^{-\frac1s}
+
(n-j)\,t^{-\frac1s}
=
n\,t^{-\frac1s}.
\end{equation*}
Thus, we can estimate the measure in~\eqref{eq:devilmeas} (with equality if the $r_k$'s are all equal) as
\begin{equation*}
\begin{split}
|\{\delta_{s,\Omega}>t\}|
&\le 
\mathbf 1_{\left(0,r^{-s}_{n}\right]}(t)
\left(
2\sum_{k=1}^{n} r_k
\right)
+
2\sum_{j=1}^n
\mathbf 1_{\left(r^{-s}_{n+1-j},r^{-s}_{n-j}\right]}(t)
\,n\,t^{-\frac1s}
\\
&\le
2nr_n
\,
\mathbf 1_{\left(0,r^{-s}_{n}\right]}(t)
+
2nt^{-\frac1s}
\,
\mathbf 1_{\left(r^{-s}_{n},\infty\right)}(t)
\end{split}
\end{equation*}    
for all $t>0$, proving~\eqref{eq:mariemonti}.
We thus obtain that (with equality if the $r_k$'s are all equal) 
\begin{equation*}
\{\delta_{s,\Omega}>t\}^\star
\subseteq
\begin{cases}
(-nr_n,nr_n) 
&
0<t\le r^{-s}_n,
\\[1ex]
\left(-nt^{-\frac1s},nt^{-\frac1s}\right)
&
t>r^{-s}_n,
\end{cases}
\end{equation*}
and consequently, arguing similarly as in~\eqref{eq:aliante}, we get that
\begin{equation*}
\begin{split}
\delta_{s,\Omega}^\star(x)
&=
\int_0^\infty\mathbf 1_{\{\delta_{s,\Omega}>t\}^\star}(x)\di t
\le
\int_0^{r_n^{-s}}\mathbf 1_{(-nr_n,nr_n)}(x)\di t
+
\int_{r_n^{-s}}^\infty\mathbf 1_{(-nt^{-1/s},nt^{-1/s})}(x)\di t
\\
&=
r_n^{-s}\,\mathbf 1_{(-nr_n,nr_n)}(x)
+
\mathbf 1_{(-nr_n,nr_n)}(x)
\,
\int_{r_n^{-s}}^\infty\mathbf 1_{(0,n^s|x|^{-s})}(t)\di t
=
n^s\,\mathbf 1_{(-nr_n,nr_n)}(x)\,|x|^{-s}
\end{split}
\end{equation*}
for all $x\in\R$.
Recalling that $r_n\le r$ by assumption, we obtain~\eqref{eq:nomega}.
\end{proof}

We conclude with the following result, which is a direct consequence of \cref{res:nomega}.

\begin{corollary}
\label{res:nomega_vol}
Let $N=1$, $s\in(0,1)$ and $\Omega\subsetneq\R$ be the disjoint union of $n\in\mathbb N$ (non-empty) bounded open intervals.
If $u\in C^\infty_0(\Omega)$ and $p\in[1,\infty)$ is such that $sp<1$, then 
\begin{equation}
\label{eq:aceto}
\int_{\Omega}
\frac{|u|^p}{d_\Omega^{sp}}\di x
\le
n^{sp} 
\int_{\R} 
\frac{(u^\star)^p}{|x|^{sp}}
\di x.
\end{equation}
\end{corollary}

\begin{proof}
Given $u\in C^\infty_0(\Omega)$,  recalling~\eqref{eq:delta_func} and owing to~\eqref{eq:hl} and~\eqref{eq:circostar}, we can estimate
\begin{equation*}
\int_{\Omega}\frac{|u|^p}{d_\Omega^{sp}}\di x
=
\int_\R
|u|^p
\,
\delta_{sp,\Omega}\di x
\le 
\int_\R
(|u|^p)^\star
\,
\delta_{sp,\Omega}^\star\di x
=
\int_\R
(u^\star)^p
\,
\delta_{sp,\Omega}^\star\di x.
\end{equation*}
Therefore, since $sp<1$, we can exploit~\eqref{eq:nomega} (with $sp$ in place of $s$) to infer that 
\begin{equation*}
\int_{\Omega}\frac{|u|^p}{d_\Omega^{sp}}\di x
\le 
\int_\R
(u^\star)^p
\,
\delta_{sp,\Omega}^\star\di x
\le
n^{sp}
\int_{\R}
\frac{(u^\star)^p}{|x|^{sp}}
\di x,
\end{equation*}
proving~\eqref{eq:aceto} and concluding the proof.
\end{proof}

\section{The geometrical approach}

In this section, we detail the proof of \cref{res:cheeger} and derive some consequences, both in the convex and non-convex case.

\subsection{Proof of \texorpdfstring{\cref{res:cheeger}}{Theorem 1.4}}

We begin with our auxiliary result concerning the geometrical interpretation of the sharp constant~\eqref{eq:h_s1}.

\begin{proof}[Proof of \cref{res:cheeger}]
Let us observe that $\mathfrak g_s(\Omega)\in[0,\infty)$ is well posed, since
\begin{equation*}
\mathfrak g_s(\Omega)
\le
\frac{P_s(B)}{V_{s,\Omega}(B)}<\infty
\end{equation*}
whenever $B\Subset\Omega$ is a (non-empty) open ball.
We start by showing that
\begin{equation}
\label{eq:hgeg} 
\mathfrak{h}_{s,1}(\Omega) 
\ge 
\mathfrak{g}_s(\Omega).
\end{equation}
We fix $u\in C^{\infty}_0(\Omega)$ and, without loss of generality, assume that $u\ge0$.
By Cavalieri's principle, we have that
\begin{equation}
\label{eq:cavalieri}
\int_\Omega\frac{u}{d^s_\Omega}\di x
=
\int_0^{\|u\|_{L^\infty(\Omega)}}
\int_{\{u>t\}}\frac{1}{d^s_\Omega}\di x\di t
=
\int_0^{\|u\|_{L^\infty(\Omega)}}
V_{s,\Omega}(\{u>t\})\di t.
\end{equation}
Now, for each $t\in(0,\|u\|_{L^\infty(\Omega)})$, the (non-empty, open) set $E_t=\{u>t\}$ is such that $E_t\Subset\Omega$ and $|E_t|>0$ and thus, by the definition in~\eqref{eq:cheeger}, we can write
\begin{equation*}
P_s(E_t)\ge \mathfrak g_s(\Omega)\, V_{s,\Omega}(E_t).
\end{equation*}
Hence, thanks to the fractional coarea formula in~\eqref{eq:fracoarea}, we infer that 
\begin{equation*}
[u]_{W^{s,1}(\R^N)}
=
\int_0^{\|u\|_{L^\infty(\Omega)}}
P_s(E_t)\di t
\ge 
\int_0^{\|u\|_{L^\infty(\Omega)}}
\mathfrak g_s(\Omega)\, V_{s,\Omega}(E_t)
\di t
=
\mathfrak g_s(\Omega)
\int_\Omega\frac{u}{d^s_\Omega}\di x,
\end{equation*}
readily implying the validity of~\eqref{eq:hgeg}. 
We now prove the converse inequality; that is, 
\begin{equation}
\label{eq:ggeh}
\mathfrak{h}_{s,1}(\Omega) 
\le 
\mathfrak{g}_s(\Omega). 
\end{equation}
We let $E\Subset\Omega$ be such that $|E|>0$, and we define
\begin{equation*}
u_{\varepsilon}=\varrho_\varepsilon*\mathbf 1_{E}\in C^\infty(\R^N),
\end{equation*}
where $(\varrho_\varepsilon)_{\varepsilon>0}\subseteq C^\infty_0(\R^N)$ is a family of standard non-negative mollifiers.
Since $E\Subset\Omega$, we  have that $\operatorname{supp}u_{\varepsilon}\Subset\Omega$ for all $\varepsilon>0$ sufficiently small.
Therefore, owing to~\cite{L23}*{Th.~6.62} and the Dominated Convergence Theorem, we have that
\begin{equation*}
\mathfrak h_{s,1}(\Omega)
\le
\lim_{\varepsilon\to0^+}
\frac{[u_{\varepsilon}]_{W^{s,1}(\R^N)}}{\displaystyle\int_{\Omega}\frac{u_{\varepsilon}}{d^{s}_\Omega}\di x}
=
\frac{P_s(E)}{V_{s,\Omega}(E)},
\end{equation*}
from which we readily deduce~\eqref{eq:ggeh} and reach the conclusion of the proof.
\end{proof}

Actually, in \cref{res:cheeger}, we can restrict the infimization in~\eqref{eq:cheeger} only to smooth open sets compactly contained in $\Omega$.
Precisely, we have the following result. 

\begin{corollary}
\label{res:cheeger_smooth}
Let $N\ge1$, $s\in(0,1)$ and let $\Omega\subsetneq\R^N$ be a (non-empty) open set.
Then, 
\begin{equation}
\label{eq:cheeger_smooth}
\mathfrak g_s(\Omega)
=
\inf\left\{
\frac{P_s(E)}{V_{s,\Omega}(E)}
:
E\in\mathcal O(\Omega)
\right\}\in[0,\infty),
\end{equation}
where $\mathcal O(\Omega)$ is the family of all open sets with smooth boundary compactly contained in~$\Omega$.
\end{corollary}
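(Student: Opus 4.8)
The plan is to prove \cref{res:cheeger_smooth} by showing that the infimum over $\mathcal O(\Omega)$ on the right-hand side of~\eqref{eq:cheeger_smooth}, call it $\widetilde{\mathfrak g}_s(\Omega)$, coincides with $\mathfrak g_s(\Omega)$. The inequality $\widetilde{\mathfrak g}_s(\Omega)\ge\mathfrak g_s(\Omega)$ is immediate, since any $E\in\mathcal O(\Omega)$ is in particular a measurable set $E\subseteq\Omega$ with $|E|>0$, so it is an admissible competitor in~\eqref{eq:cheeger}. The substance is therefore the reverse inequality $\widetilde{\mathfrak g}_s(\Omega)\le\mathfrak g_s(\Omega)$, and the cleanest route is to observe that the proof of \cref{res:cheeger} essentially already establishes it: combining~\eqref{eq:hgeg} applied with $\mathcal O(\Omega)$ in place of all measurable subsets (which gives $\mathfrak h_{s,1}(\Omega)\ge\widetilde{\mathfrak g}_s(\Omega)$ by the very same coarea argument, since the superlevel sets of $u\in C^\infty_0(\Omega)$ can be taken smooth and compactly contained in $\Omega$ for a.e.\ level $t$ by Sard's theorem) with the trivial bound $\widetilde{\mathfrak g}_s(\Omega)\ge\mathfrak g_s(\Omega)$ and the equality $\mathfrak h_{s,1}(\Omega)=\mathfrak g_s(\Omega)$ from \cref{res:cheeger} would pin everything together; but the slickest presentation is simply to redo the proof of~\eqref{eq:ggeh} and check that the competitor one builds can be taken in $\mathcal O(\Omega)$.

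Concretely, I would argue as follows. Assume $\mathfrak g_s(\Omega)<\infty$ (otherwise~\eqref{eq:cheeger_smooth} is trivial since the right-hand side is always $\ge\mathfrak g_s(\Omega)=\infty$), fix $E\subseteq\Omega$ with $|E|>0$ and $P_s(E)<\infty$, and recover the functions $u_{\varepsilon,k}=\varrho_\varepsilon\star\mathbf 1_{E_k}$ exactly as in the proof of \cref{res:cheeger}, where $E_k=E\cap\Omega_k\Subset\Omega$. For $k$ large and $\varepsilon$ small, $u_{\varepsilon,k}\in C^\infty_0(\Omega)$. Now invoke the fractional coarea formula~\eqref{eq:fracoarea} together with Sard's theorem: for a.e.\ $t\in(0,\|u_{\varepsilon,k}\|_{L^\infty})$, the superlevel set $U_t=\{u_{\varepsilon,k}>t\}$ is an open set with smooth boundary, and it is compactly contained in $\operatorname{supp}u_{\varepsilon,k}\Subset\Omega$, hence $U_t\in\mathcal O(\Omega)$. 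Therefore, for a.e.\ such $t$,
\begin{equation*}
P_s(U_t)\ge\widetilde{\mathfrak g}_s(\Omega)\,V_{s,\Omega}(U_t),
\end{equation*}
and integrating in $t$ via~\eqref{eq:fracoarea} and Cavalieri's principle exactly as in the proof of~\eqref{eq:hgeg} gives
\begin{equation*}
[u_{\varepsilon,k}]_{W^{s,1}(\R^N)}\ge\widetilde{\mathfrak g}_s(\Omega)\int_\Omega\frac{u_{\varepsilon,k}}{d^s_\Omega}\di x.
\end{equation*}
Dividing, letting $\varepsilon\to0^+$ using~\cite{L23}*{Th.~6.62} and the Dominated Convergence Theorem to get $P_s(E_k)/V_{s,\Omega}(E_k)\ge\widetilde{\mathfrak g}_s(\Omega)$, and then letting $k\to\infty$ via~\eqref{eq:gigi} yields $P_s(E)/V_{s,\Omega}(E)\ge\widetilde{\mathfrak g}_s(\Omega)$. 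Taking the infimum over admissible $E$ gives $\mathfrak g_s(\Omega)\ge\widetilde{\mathfrak g}_s(\Omega)$, which together with the reverse (trivial) inequality closes the proof.

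The only genuine subtlety — and the step I expect to be the main obstacle to write carefully — is the regularity of the superlevel sets: one must ensure that for a.e.\ level $t$ the set $\{u_{\varepsilon,k}>t\}$ has smooth boundary, which is precisely the content of Sard's theorem (the set of critical values of the smooth function $u_{\varepsilon,k}$ has measure zero, so for a.e.\ $t$ the level $t$ is a regular value and $\{u_{\varepsilon,k}=t\}$ is a smooth hypersurface). One should also note that $\{u_{\varepsilon,k}>t\}\Subset\Omega$ for every $t>0$ because $u_{\varepsilon,k}$ has compact support in $\Omega$, so these superlevel sets genuinely lie in $\mathcal O(\Omega)$. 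Everything else is a verbatim repetition of the two estimates already carried out in the proof of \cref{res:cheeger}, so the proof is short; the economical write-up is essentially: "repeat the proof of \cref{res:cheeger}, observing that by Sard's theorem the superlevel sets employed in~\eqref{eq:hgeg} can be chosen in $\mathcal O(\Omega)$ for a.e.\ level, while the competitors built in~\eqref{eq:ggeh} already have the required property upon passing to smooth superlevel sets."
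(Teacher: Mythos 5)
Your proposal is correct and rests on exactly the same key ingredient as the paper, namely Sard's theorem (which the paper cites via~\cite{M12}*{Th.~13.15}) to guarantee that the superlevel sets of a non-negative smooth compactly supported function are smooth and compactly contained in $\Omega$ for a.e.\ level, so that the Cavalieri/coarea argument of~\eqref{eq:hgeg} applies verbatim with $\widetilde{\mathfrak g}_s(\Omega)$ in place of $\mathfrak g_s(\Omega)$. Where you differ is in how you close the loop. The paper applies the Sard-upgraded coarea estimate to an arbitrary $u\in C^\infty_0(\Omega)$ to obtain $\mathfrak h_{s,1}(\Omega)\ge\widetilde{\mathfrak g}_s(\Omega)$, and then simply invokes the equality $\mathfrak h_{s,1}(\Omega)=\mathfrak g_s(\Omega)$ of \cref{res:cheeger} to conclude $\widetilde{\mathfrak g}_s(\Omega)\le\mathfrak g_s(\Omega)$; this is the route you yourself first sketch and call ``the cleanest''. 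You then discard it and instead re-run the entire $(\varepsilon,k)$-approximation machinery of~\eqref{eq:ggeh}, applying Sard only to the mollified indicators $u_{\varepsilon,k}$ and passing to the limit via~\eqref{eq:gigi} to obtain $\mathfrak g_s(\Omega)\ge\widetilde{\mathfrak g}_s(\Omega)$ directly, bypassing $\mathfrak h_{s,1}(\Omega)$. This is logically sound (it is a copy of~\eqref{eq:ggeh} with $\mathfrak g_s$ replaced by $\widetilde{\mathfrak g}_s$, and inherits whatever delicacies the limit passage~\eqref{eq:gigi} carries in the paper itself), but it is strictly more work than needed: once you have $\mathfrak h_{s,1}(\Omega)\ge\widetilde{\mathfrak g}_s(\Omega)$ from Sard+coarea, the approximation argument is already bundled inside \cref{res:cheeger} and there is no reason to rerun it. In short, right tools, right observation, but a more verbose path than the paper's one-line finish.
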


\begin{proof}
Letting $\widetilde{\mathfrak g}_s(\Omega)\in[0,\infty]$ be the right-hand side of~\eqref{eq:cheeger_smooth}, we have $\mathfrak g_s(\Omega)\le \widetilde{\mathfrak g}_s(\Omega)$ by definition.
On the other hand, if $u\in C^\infty_0(\Omega)$ is such that $u\ge0$, then $\{u>t\}\in\mathcal O(\Omega)$ for a.e.\ $t\in\left(0,\|u\|_{L^\infty}\right)$ (e.g., see~\cite{M12}*{Th.~13.15}). 
Hence, arguing exactly as in the first part of the proof of \cref{res:cheeger}, we see that $\mathfrak h_{s,1}(\Omega)\ge\widetilde{\mathfrak g}_s(\Omega)$, in particularly yielding that $\widetilde{\mathfrak g}_s(\Omega)<\infty$.
Thus we have $\widetilde{\mathfrak g}_s(\Omega)\le \mathfrak g_s(\Omega)$ by \cref{res:cheeger}, leading to the conclusion.
\end{proof}

A remarkable consequence of \cref{res:cheeger} is the following comparison result.

\begin{corollary}
\label{res:spacco}
Let $N\ge1$, $p\in(1,\infty)$ and $s\in(0,1)$ be such that $sp<1$. 
For any non-empty open set $\Omega\subsetneq\R^N$, it holds that
\begin{equation}
\label{eq:spacco}
\mathfrak h_{s,p}(\Omega)\le\mathfrak h_{sp,1}(\Omega).
\end{equation}
If $\Omega=\mathbb H^N_+$, then the inequality~\eqref{eq:spacco} is strict.
\end{corollary}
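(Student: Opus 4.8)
The plan is to combine the geometric reformulation of \cref{res:cheeger}, in the sharper form of \cref{res:cheeger_smooth}, applied with the \emph{single} fractional parameter $sp\in(0,1)$, with a direct test-function argument in the definition~\eqref{eq:h_sp} of $\mathfrak h_{s,p}(\Omega)$. The elementary observation underlying everything is that, for any measurable set $E\subseteq\R^N$, the difference $\mathbf 1_E(x)-\mathbf 1_E(y)$ takes only the values $0$ and $\pm1$, so that $|\mathbf 1_E(x)-\mathbf 1_E(y)|^p=|\mathbf 1_E(x)-\mathbf 1_E(y)|$ for every $p\ge1$; consequently $[\mathbf 1_E]^p_{W^{s,p}(\R^N)}=[\mathbf 1_E]_{W^{sp,1}(\R^N)}=P_{sp}(E)$. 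This is precisely the identity that moves the exponent $p$ from the function onto the order of the seminorm, and it is what makes the comparison with $\mathfrak h_{sp,1}$ natural.

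Concretely, I would fix $E\in\mathcal O(\Omega)$, i.e.\ an open set with smooth boundary and $E\Subset\Omega$ (so $P_{sp}(E)<\infty$), and set $u_\varepsilon=\varrho_\varepsilon\star\mathbf 1_E$ for a family $(\varrho_\varepsilon)_{\varepsilon>0}\subset C^\infty_0(\R^N)$ of standard non-negative mollifiers. For $\varepsilon$ small one has $u_\varepsilon\in C^\infty_0(\Omega)$ with $0\le u_\varepsilon\le1$ and $u_\varepsilon\not\equiv0$, exactly as in the proof of \cref{res:cheeger}. Since $\varrho_\varepsilon$ is a probability density, Jensen's inequality for $t\mapsto|t|^p$, together with Tonelli's theorem and the change of variables $(x,y)\mapsto(x-z,y-z)$, gives $[u_\varepsilon]^p_{W^{s,p}(\R^N)}\le[\mathbf 1_E]^p_{W^{s,p}(\R^N)}=P_{sp}(E)$. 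Since $\overline E\Subset\Omega$, the distance $d_\Omega$ is bounded away from $0$ near $\overline E$, so dominated convergence yields $\int_\Omega u_\varepsilon^p\,d_\Omega^{-sp}\di x\to V_{sp,\Omega}(E)$ as $\varepsilon\to0^+$, a limit lying in $(0,\infty)$ because $|E|>0$. Using $u_\varepsilon$ as a competitor in~\eqref{eq:h_sp} and letting $\varepsilon\to0^+$ then gives $\mathfrak h_{s,p}(\Omega)\le P_{sp}(E)/V_{sp,\Omega}(E)$; taking the infimum over $E\in\mathcal O(\Omega)$ and invoking \cref{res:cheeger_smooth} and then \cref{res:cheeger}, both with parameter $sp$, produces $\mathfrak h_{s,p}(\Omega)\le\mathfrak g_{sp}(\Omega)=\mathfrak h_{sp,1}(\Omega)$, which is~\eqref{eq:spacco}.

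For the strict inequality on $\mathbb H^N_+$ I would bypass~\eqref{eq:spacco} and argue directly from the explicit values: by~\eqref{eq:sharpfrac_h}, $\mathfrak h_{s,p}(\mathbb H^N_+)=C_{N,sp}\,\Lambda_{s,p}$, while \cref{res:hs1} applied with order $sp\in(0,1)$ gives $\mathfrak h_{sp,1}(\mathbb H^N_+)=C_{N,sp}\,\Lambda_{sp,1}$. Since $C_{N,sp}>0$ and \cref{res:lambada} gives the strict inequality $\Lambda_{s,p}<\Lambda_{sp,1}$ precisely when $sp<1$, the conclusion $\mathfrak h_{s,p}(\mathbb H^N_+)<\mathfrak h_{sp,1}(\mathbb H^N_+)$ follows at once. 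I do not anticipate a genuine obstacle: the only points needing a little care are that $u_\varepsilon$ stays compactly supported in $\Omega$ and does not increase the Sobolev--Slobodecki\u{\i} seminorm (the standard convolution estimate, already implicit in the proof of \cref{res:cheeger}), and that $V_{sp,\Omega}(E)$ is finite and strictly positive so that the limiting Rayleigh quotient is well defined, which is exactly what membership in $\mathcal O(\Omega)$ guarantees.
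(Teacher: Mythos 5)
Your proposal is correct and follows essentially the same route as the paper: test the Rayleigh quotient in~\eqref{eq:h_sp} with mollified indicator functions $u_\varepsilon=\varrho_\varepsilon\star\mathbf 1_E$, use the identity $[\mathbf 1_E]^p_{W^{s,p}}=P_{sp}(E)$ to move the exponent onto the order, and then conclude via \cref{res:cheeger_smooth} and \cref{res:cheeger} at the parameter $sp$; the strictness on $\mathbb H^N_+$ is handled, as in the paper, by comparing $\Lambda_{s,p}<\Lambda_{sp,1}$ from \cref{res:lambada} together with~\eqref{eq:sharpfrac_h} and~\eqref{eq:hs1_H}. The only cosmetic difference is that you bound $[u_\varepsilon]^p_{W^{s,p}(\R^N)}\le P_{sp}(E)$ outright via Jensen's inequality, whereas the paper invokes the convergence result \cite{L23}*{Th.~6.62} to pass to the limit in the numerator; your variant is marginally cleaner since an inequality suffices and it sidesteps any need to know a priori that $[u_\varepsilon]_{W^{s,p}}$ converges, though restricting from the start to $E\in\mathcal O(\Omega)$ (as you do) also guarantees $P_{sp}(E)<\infty$, which keeps everything well defined.
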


\begin{proof}
As in the second part of the proof of \cref{res:cheeger}, given $E\Subset\Omega$ such that $|E|>0$, we define $u_\varepsilon=\mathbf 1_E*\varrho_\varepsilon$ for all $\varepsilon>0$, where $(\varrho_\varepsilon)_{\varepsilon>0}\subseteq C^\infty_0(\R^N)$ is a family of standard non-negative mollifiers.
As before, $u_\varepsilon\in C^\infty_0(\R^N)$ and $0\le u_\varepsilon\le 1$ for all $\varepsilon>0$, with $\operatorname{supp} u_\varepsilon\Subset\Omega$ for all $\varepsilon>0$ sufficiently small.
Therefore, by~\cite{L23}*{Th.~6.62} and the Dominated Convergence Theorem, and since $sp<1$, we get that
\begin{equation*}
\mathfrak h_{s,p}(\Omega)
\le 
\lim_{\varepsilon\to0^+}
\frac{[u_\varepsilon]^p_{W^{s,p}(\R^N)}}{\displaystyle\int_\Omega\frac{u^p_\varepsilon}{d_\Omega^{sp}}\di x}
=
\frac{[\mathbf 1_E]^p_{W^{s,p}(\R^N)}}{\displaystyle\int_\Omega\frac{\mathbf 1_E^p}{d_\Omega^{sp}}\di x}
=
\frac{[\mathbf 1_E]_{W^{sp,1}(\R^N)}}{\displaystyle\int_\Omega\frac{\mathbf 1_E}{d_\Omega^{sp}}\di x}
=
\frac{P_{sp}(E)}{V_{sp,\Omega}(E)}.
\end{equation*}  
Consequently, by \cref{res:cheeger}, we conclude that
\begin{equation*}
\mathfrak h_{s,p}(\Omega)
\le 
\mathfrak g_{sp,1}(\Omega)
=
\mathfrak h_{sp,1}(\Omega),
\end{equation*}
proving~\eqref{eq:spacco}. 
For $\Omega=\mathbb H^N_+$, the inequality in~\eqref{eq:spacco} holds strict by~\eqref{eq:hs1_H} and~\eqref{eq:Lambda_compare}.
\end{proof}

\subsection{The convex case}

If $\Omega\subsetneq\R^N$ is a (non-empty) convex open set, then we can refine the statement of \cref{res:cheeger} as follows.

\begin{corollary}
\label{res:cheeger_relax}
Let $N\ge1$, $s\in(0,1)$ and let $\Omega\subsetneq\R^N$ be a (non-empty) open convex set.
Then, it holds that
\begin{equation}
\label{eq:h_s1=g_s*}
\mathfrak h_{s,1}(\Omega)
=
\mathfrak g_{s}^*(\Omega),
\end{equation}
where
\begin{equation}
\label{eq:g_s*}
\mathfrak g_s^*(\Omega)
=
\inf \left\{ \frac{P_s(E)}{V_{s,\Omega}(E)} : E \subseteq\Omega,\ |E|\in(0,\infty) \right\}\in(0,\infty).
\end{equation}
Moreover, if $u\in\mathcal W^{s,1}_0(\Omega)\setminus\{0\}$ is a non-negative minimizer of~\eqref{eq:h_s1} then, for a.e.\ $t>0$ such that $|\{u>t\}|\in(0,\infty)$, $E_t=\{u>t\}$ is a minimizer of~\eqref{eq:g_s*}.
Vice versa, if $E\subseteq\Omega$ is a minimizer of~\eqref{eq:g_s*} such that $\mathbf 1_E\in\mathcal W^{s,1}_0(\Omega)$, then $u=\mathbf 1_E/V_{s,\Omega}(E)$ is a non-negative minimizer of~\eqref{eq:h_s1}.  
\end{corollary}

In the proof of \cref{res:cheeger_relax}, we are going to exploit the following simple approximation result, whose proof is omitted. 

\begin{lemma}
\label{res:approx}
Let $N\ge1$.
If $\Omega\subseteq\R^N$ is a (non-empty) open convex set, then there exists a sequence of (non-empty) bounded open convex sets $(\Omega_k)_{k\in\mathbb N}$ such that 
\begin{equation*}
\Omega_k\Subset\Omega_{k+1}\Subset\Omega
\quad
\text{for all}\ k\in\mathbb N
\qquad 
\text{and}\qquad
\Omega=\bigcup_{k=1}^\infty\Omega_k.
\end{equation*}
\end{lemma}

\begin{proof}[Proof of \cref{res:cheeger_relax}]
By \cref{res:cheeger} and the trivial inequality $\mathfrak g_s^*(\Omega)\le\mathfrak g_s(\Omega)$, to show~\eqref{eq:h_s1=g_s*} we only have to prove that $\mathfrak g_s(\Omega)\le\mathfrak g_s^*(\Omega)$. 
To this aim, let $E\subseteq\Omega$ be such that $|E|\in(0,\infty)$, $(\Omega_k)_{k\in\mathbb N}$ be the sequence of (non-empty) bounded open convex set given by \cref{res:approx}, and  $E_k=E\cap\Omega_k$ for every $k\in\mathbb N$. 
Clearly, for all $k\in\mathbb N$ sufficiently large, we have that $E_k\Subset\Omega$ and $|E_k|\in(0,\infty)$.
Moreover, by the Monotone Convergence Theorem, we also have that 
\begin{equation*}
V_{s,\Omega}(E)
=
\lim_{k\to\infty}
V_{s,\Omega}(E_k).
\end{equation*}
Finally, by~\eqref{eq:interconvex}, we also have that $P_s(E_k)\le P_s(E)$ for all $k\in\mathbb N$. 
Thus, we get 
\begin{equation*}
\mathfrak g_s(\Omega)
\le 
\limsup_{k\to\infty}
\frac{P_s(E_k)}{V_{s,\Omega}(E_k)}
\le 
\lim_{k\to\infty}
\frac{P_s(E)}{V_{s,\Omega}(E_k)}
=
\frac{P_s(E)}{V_{s,\Omega}(E)},
\end{equation*} 
from which~\eqref{eq:h_s1=g_s*} follows immediately.
By \cref{res:hs1}, this also shows that $\mathfrak g_s^*(\Omega)>0$.

To prove the last part of the statement, it is enough to observe that, if $u\in\mathcal W^{s,1}_0(\Omega)\setminus\{0\}$ is a non-negative minimizer of~\eqref{eq:h_s1} and $E_t=\{u>t\}$ for $t>0$, then, by~\eqref{eq:g_s*}, the fractional coarea formula~\eqref{eq:fracoarea} and Cavalieri's principle~\eqref{eq:cavalieri}, we can write
\begin{equation*}
[u]_{W^{s,1}(\R^N)}
-
\mathfrak h_{s,1}(\Omega)
\int_\Omega\frac{u}{d_\Omega^s}\di x
=
\int_0^\infty
\Big(
P_s(E_t)
-
\mathfrak g_{s}^*(\Omega)
\,
V_{s,\Omega}(E_t)
\Big)\di t,
\end{equation*}
from which we deduce that $E_t$ is a minimizer of~\eqref{eq:g_s*} for a.e.\ $t>0$ such that $|E_t|\in(0,\infty)$.
Vice versa, if $E\subseteq\Omega$ is a minimizer of~\eqref{eq:g_s*} such that $\mathbf 1_E\in\mathcal W^{s,1}_0(\Omega)$, then clearly also $u=\mathbf 1_E/V_{s,\Omega}(E)\in\mathcal W^{s,1}_0(\Omega)\setminus\{0\}$.
Moreover, by definition and~\eqref{eq:h_s1=g_s*}, we have that
\begin{equation*}
[u]_{W^{s,1}(\R^N)}
=
\frac{P_s(E)}{V_{s,\Omega}(E)}
=
\mathfrak g_s^*(\Omega)=\mathfrak h_{s,1}(\Omega)
\quad
\text{and}
\quad
\int_\Omega\frac{u}{d_\Omega^s}
\di x
=
1,
\end{equation*}
proving that~$u$ is a minimizer of~\eqref{eq:h_s1}. 
\end{proof}

As an immediate consequence of \cref{res:cheeger_relax}, we get the following estimate.

\begin{corollary}
\label{res:cheeger_omega}
Let $N\ge1$ and $s\in(0,1)$.
If $\Omega\subsetneq\R^N$ is a (non-empty) bounded open convex set, then
\begin{equation*}
\mathfrak h_{s}(\Omega)
\le 
\frac{P_s(\Omega)}{V_{s,\Omega}(\Omega)}.
\end{equation*}
\end{corollary}

\subsection{The non-convex case}

A statement similar to \cref{res:cheeger_relax} can be achieved also in the case $\Omega$ is a non-convex set, see \cref{res:cheeger_s-neg} below.
To this purpose, we need to introduce the following terminology.

\begin{definition}[$s$-negligible set]
Let $s\in(0,1)$.
A set $E\subseteq\R^N$ is \emph{$s$-negligible} if 
\begin{equation*}
\lim_{r\to0^+}
P_s
\big(
B_r(E)
\big)
=0,
\end{equation*}
where $B_r(E)=\bigcup_{x\in E}B_r(x)$ for every $r>0$.
\end{definition}

For example, by~\eqref{eq:submodularity} and the fact that $P_s(B_r(x))=P_s(\mathbb B^N)\,r^{N-s}$ for all $x\in\R^N$ and $r>0$, any set $E\subset\R^N$ with finitely many points is $s$-negligible for every $s\in(0,1)$.

\begin{corollary}
\label{res:cheeger_s-neg}
Let $N\ge1$, $s\in(0,1)$, $U\subseteq\R^N$ be a (non-empty) open convex set and let $F\subsetneq U$ be an $s$-negligible closed set.
If $\Omega=U\setminus F$, then
\begin{equation}
\label{eq:cheeger_s-neg}
\mathfrak h_{s,1}(\Omega)
\le 
\inf\left\{
\frac{P_s(E)}{V_{s,\Omega}(E)} : E\subseteq U,\ |E|\in(0,\infty)
\right\}.
\end{equation} 
In particular, if $U$ is bounded, then
\begin{equation*}
\mathfrak h_{s,1}(\Omega)
\le 
\frac{P_s(U)}{V_{s,\Omega}(U)}.
\end{equation*}
\end{corollary}

\begin{proof}
Let $(U_k)_{k\in\mathbb N}$ be the sequence of (non-empty) bounded open convex sets given by \cref{res:approx} applied to~$U$.
Given $E\subseteq U$ such that $|E|\in(0,\infty)$, we let $r>0$ and consider 
\begin{equation*}
E_k=(E\cap U_k)
\quad
\text{and}
\quad
E_{k,r}=E_k\setminus
B_r(F).
\end{equation*}
Note that $E_{k,r}\Subset\Omega$ and $|E_{k,r}|\in(0,\infty)$ for every $k\in\mathbb N$ sufficiently large and every $r>0$ sufficiently small. 
Now, on the one hand, by~\eqref{eq:perdiff}, \eqref{eq:interconvex} and~\eqref{eq:submodularity}, we can estimate
\begin{equation*}
P_s(E_{k,r})
\le 
P_s(E_k)
+
P_s\big(
B_r(F)
\big)
\le 
P_s(E)
+
P_s\big(
B_r(F)
\big).
\end{equation*}
On the other hand, since $E_{k,r}\subseteq  E_k\subseteq E$ for $k\in\mathbb N$ and $r>0$, by the Monotone Convergence Theorem, we infer that
\begin{equation*}
\lim_{k\to\infty}
\lim_{r\to0^+}
V_{s,\Omega}(E_{k,r})
=
\lim_{k\to\infty}
V_{s,\Omega}(E_{k})
=
V_{s,\Omega}(E).
\end{equation*}
Therefore, owing to \cref{res:cheeger} and since $F$ is $s$-negligible, we conclude that
\begin{equation*}
\mathfrak h_{s,1}(\Omega)
=
\mathfrak g_s(\Omega)
\le 
\limsup_{k\to\infty}
\limsup_{r\to0^+}
\frac{P_s(E_{k,r})}{V_{s,\Omega}(E_{k,r})}
\le
\lim_{k\to\infty}
\lim_{r\to0^+}
\frac{P_s(E)+P_s\big(B_r(F)\big)}{V_{s,\Omega}(E_{k,r})}
=
\frac{P_s(E)}{V_{s,\Omega}(E)}
,
\end{equation*}
from which~\eqref{eq:cheeger_s-neg} follows immediately. 
\end{proof}

\section{Proofs of the results}

\subsection{Proof of \texorpdfstring{\cref{res:hs1}}{Theorem 1.1}}

In order to characterize the sharp constant in~\eqref{eq:h_sp} for $\Omega=\mathbb H^N_+$ and $p=1$, so to prove \cref{res:hs1}, we need some preliminary results.

We begin with the following stability result for $\mathfrak h_{s,p}(\Omega)$ as $p\to1^+$.

\begin{lemma}
\label{res:stability}
Let $N\ge1$, $s\in(0,1)$ and $\Omega\subsetneq\R^N$ be a (non-empty)  open set.
Then,
\begin{equation}
\label{eq:stab_dist}
\lim_{p \to 1^+} 
\int_{\Omega} \frac{|u|^p}{d_{\Omega}^{sp}}
\di x
= 
\int_{\Omega} \frac{|u|}{d_{\Omega}^{s}}\di x
\end{equation}
and
\begin{equation}
\label{eq:stab_snorm}
\lim_{p\to1^+}\,
[u]_{W^{s,p}(\R^N)}^p
= 
[u]_{W^{s,1}(\R^N)}
\end{equation}
for every $u\in C^\infty_0(\Omega)$.
As a consequence, it holds that
\begin{equation}
\label{eq:stab_h}
\limsup_{p\to1^+}\,
\mathfrak h_{s,p}(\Omega)
\le 
\mathfrak h_{s,1}(\Omega).
\end{equation}
\end{lemma}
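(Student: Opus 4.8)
The plan is to prove the two limits~\eqref{eq:stab_dist} and~\eqref{eq:stab_snorm} separately, each by dominated convergence with a majorant that is uniform for $p$ ranging in a right neighbourhood of $1$ (say $p\in[1,2]$), and then to obtain~\eqref{eq:stab_h} by passing to the $\limsup$ in the variational characterization of $\mathfrak h_{s,p}(\Omega)$. Throughout, I would write $K=\operatorname{supp}u\Subset\Omega$ and exploit that $\delta:=\operatorname{dist}(K,\partial\Omega)>0$, so that $d_\Omega$ is bounded above and below by positive constants on $K$; note that convexity of $\Omega$ plays no role in this lemma.

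For~\eqref{eq:stab_dist}: the integrand $\frac{|u|^p}{d_\Omega^{sp}}$ is supported in $K$, where it is bounded by a constant $c_0=c_0(u,s,\delta)$ independent of $p\in[1,2]$ (indeed $|u|^p\le\max(1,\|u\|_{L^\infty}^2)$ and $d_\Omega^{-sp}\le\max(1,\delta^{-2s})$ there), and $|K|<\infty$; since $t^p\to t$ pointwise on $[0,\infty)$ as $p\to1^+$, dominated convergence gives the claim at once.

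For~\eqref{eq:stab_snorm}: this is the technical core. On $\{x\ne y\}$ the integrand converges pointwise to $\frac{|u(x)-u(y)|}{|x-y|^{N+s}}$, so everything reduces to producing an $L^1(\R^N\times\R^N)$ majorant valid for all $p\in[1,2]$, and I would split the domain according to whether $|x-y|\ge1$ or $|x-y|<1$. On $\{|x-y|\ge1\}$ the integrand vanishes unless $x$ or $y$ lies in $K$; there $|u(x)-u(y)|^p$ is bounded by a $p$-independent constant and $|x-y|^{-N-sp}\le|x-y|^{-N-s}$ because $p\ge1$, and the resulting bound is integrable since $|K|<\infty$ and $|z|^{-N-s}$ is integrable at infinity. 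On $\{|x-y|<1\}$ I would use the Lipschitz estimate $|u(x)-u(y)|\le\|\nabla u\|_{L^\infty}|x-y|$ to get $\frac{|u(x)-u(y)|^p}{|x-y|^{N+sp}}\le\|\nabla u\|_{L^\infty}^p|x-y|^{p(1-s)-N}\le\max(1,\|\nabla u\|_{L^\infty}^2)\,|x-y|^{(1-s)-N}$, the last inequality using $p(1-s)\ge1-s$ together with $|x-y|<1$; since the integrand again vanishes off a bounded set and $|z|^{(1-s)-N}$ is integrable near the origin (as $s<1$), this piece is integrable too. Adding the two bounds produces the majorant, and dominated convergence yields~\eqref{eq:stab_snorm}.

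Finally, for~\eqref{eq:stab_h}: fixing any $u\in C^\infty_0(\Omega)$ with $u\ge0$ and $u\not\equiv0$, one has $\int_\Omega u/d_\Omega^s\di x>0$ and hence, by~\eqref{eq:stab_dist}, $\int_\Omega u^p/d_\Omega^{sp}\di x>0$ for $p$ near $1$; testing the definition~\eqref{eq:h_sp} with a suitable rescaling of $u$ gives $\mathfrak h_{s,p}(\Omega)\le[u]^p_{W^{s,p}(\R^N)}\big/\int_\Omega u^p/d_\Omega^{sp}\di x$ for such $p$, and taking $\limsup_{p\to1^+}$ via~\eqref{eq:stab_dist}--\eqref{eq:stab_snorm} bounds $\limsup_{p\to1^+}\mathfrak h_{s,p}(\Omega)$ by $[u]_{W^{s,1}(\R^N)}\big/\int_\Omega u/d_\Omega^s\di x$; infimizing over all such $u$ gives~\eqref{eq:stab_h}. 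The main obstacle is the construction of the uniform-in-$p$ $L^1$ majorant for~\eqref{eq:stab_snorm}, where the near-diagonal singularity must be controlled by the Lipschitz bound and the tail by the compact support of $u$, both uniformly as $p\downarrow1$; the remaining steps are routine.
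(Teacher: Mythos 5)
Your proof is correct, and the final passage to~\eqref{eq:stab_h} is essentially the same as the paper's, but the way you establish~\eqref{eq:stab_dist} and~\eqref{eq:stab_snorm} is genuinely different. The paper proves each of the two limits by a two-sided "interpolation" argument: for~\eqref{eq:stab_dist} it applies H\"older's inequality in both directions to squeeze $\int_\Omega |u|^p/d_\Omega^{sp}$ between $\int_\Omega |u|/d_\Omega^s$ raised to appropriate powers times the $p$-independent quantities $|\operatorname{supp}u|$ and $\|u/d_\Omega^s\|_{L^\infty}$, while for~\eqref{eq:stab_snorm} it writes the integrand as $\bigl(\tfrac{|u(x)-u(y)|}{|x-y|^s}\bigr)^{p-1}\tfrac{|u(x)-u(y)|}{|x-y|^{N+s}}$, bounds the first factor by $[u]_{C^{0,s}}^{p-1}$ to get $\limsup\le$, and uses Fatou for $\liminf\ge$. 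You instead run dominated convergence with explicit uniform-in-$p$ majorants, controlling the near-diagonal part of the fractional seminorm by the Lipschitz bound on $u$ and the far-off-diagonal part by the compact support. Both are sound; the paper's route is shorter and avoids the region split, at the mild cost of invoking the H\"older seminorm $[u]_{C^{0,s}}$, whereas yours is more elementary and produces a concrete $L^1$ majorant, which is arguably the more transparent way to see why the limit holds. Your side remark that convexity is not used is also correct: the paper states the lemma for convex sets only because that is the ambient hypothesis in the section, but the proof works for any non-empty open $\Omega\subsetneq\R^N$.
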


\begin{proof}
For $p\in(1,\infty)$, by H\"older's inequality, we can estimate
\begin{equation}
\label{eq:ping}
\int_{\Omega} 
\frac{|u|}{d_{\Omega}^{s}}
\di x
=
\int_{\operatorname{supp} u} 
\frac{|u|}{d_{\Omega}^{s}}
\di x 
\le 
\left(
\int_{\Omega} \frac{|u|^p}{d_{\Omega}^{sp}}\di x 
\right)^{\frac{1}{p}} 
|\operatorname{supp} u|^{\frac{p-1}{p}},
\end{equation}
and, similarly,
\begin{equation}
\label{eq:pong}
\int_{\Omega} 
\frac{|u|^p}{d_{\Omega}^{sp}}\di x 
\le 
\left\| \frac{u}{d_{\Omega}^{s}}\right\|_{L^{\infty}(\Omega)}^{p-1} 
\int_{\Omega} 
\frac{|u|}{d_{\Omega}^{s}}
\di x.
\end{equation}
Hence the validity of~\eqref{eq:stab_dist} immediately follows by combining~\eqref{eq:ping} and~\eqref{eq:pong}.
Concerning~\eqref{eq:stab_snorm}, owing to Fatou's Lemma, we just need to observe that 
\begin{equation*}
[u]_{W^{s,p}(\mathbb{R}^N)}^p 
= 
\int_{\mathbb{R}^N}
\int_{\mathbb{R}^N} 
\frac{|u(x)-u(y)|^{p-1}}{|x-y|^{s(p-1)}} \, \frac{|u(x)-u(y)|}{|x-y|^{N+s}}
\di x\di y 
\le 
[u]_{C^{0,s}(\mathbb{R}^N)}^{p-1} 
\, 
[u]_{W^{s,1}(\mathbb{R}^N)}.
\end{equation*}
Lastly, the validity of~\eqref{eq:stab_h} readily follows by combining~\eqref{eq:h_sp} with~\eqref{eq:stab_dist} and~\eqref{eq:stab_snorm}.
\end{proof}

In the case $\Omega=\mathbb H^N_+$, \cref{res:stability} can be complemented with the following result, yielding useful estimates on the energies of product test functions for $p=1$.

\begin{lemma}
\label{res:prod}
If $\varphi\in C^\infty_0(\R^{N-1})$ and $\psi\in C^\infty_0(\mathbb H^1_+)$, then $u\in C^\infty_0(\mathbb H^N_+)$, defined as $u(x)=\varphi(x')\,\psi(x_N)$ for all $x=(x',x_N)\in\mathbb H^N_+$, satisfies
\begin{equation}
\label{eq:prod_dist}
\int_{\mathbb H^N_+}\frac{|u(x)|}{x_N^s}\di x
=
\|\varphi\|_{L^1(\R^{N-1})}\int_{\mathbb H^1_+}\frac{|\psi(t)|}{t^s}\di t
\end{equation}
and 
\begin{equation}
\label{eq:prod_snorm}
[u]_{W^{s,1}(\R^N)}
\le 
C_{N,s}\,\|\varphi\|_{L^1(\R^{N-1})}\,[\psi]_{W^{s,1}(\R)}
+
2\|\psi\|_{L^1(\mathbb H^1_+)}\,[\varphi]_{W^{s,1}(\R^{N-1})}\int_0^\infty\frac{\di t}{(1+t^2)^{\frac{N+s}2}}.
\end{equation}
\end{lemma}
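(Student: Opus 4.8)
The two claims are asymmetric: formula \eqref{eq:prod_dist} is a trivial computation, while the bound \eqref{eq:prod_snorm} is the substance of the lemma. For \eqref{eq:prod_dist}, I would simply apply Fubini's theorem to the product $u(x)=\varphi(x')\psi(x_N)$ on $\mathbb H^N_+=\R^{N-1}\times(0,\infty)$, noting that the weight $x_N^{-s}$ depends only on the last variable, so the double integral factors as $\big(\int_{\R^{N-1}}|\varphi(x')|\di x'\big)\big(\int_0^\infty \psi(x_N)\,x_N^{-s}\di x_N\big)$, where I may drop the absolute value on $\psi$ since $\psi\in C^\infty_0(\mathbb H^1_+)$ can be taken $\ge 0$ (or one simply carries $|\psi|$ throughout, matching the statement as written). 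This disposes of the first identity.

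For \eqref{eq:prod_snorm}, the key step is the elementary pointwise inequality
\begin{equation*}
|u(x)-u(y)|
=
|\varphi(x')\psi(x_N)-\varphi(y')\psi(y_N)|
\le
|\varphi(x')|\,|\psi(x_N)-\psi(y_N)|
+
|\psi(y_N)|\,|\varphi(x')-\varphi(y')|,
\end{equation*}
obtained by adding and subtracting $\varphi(x')\psi(y_N)$. Plugging this into the seminorm $[u]_{W^{s,1}(\R^N)}=\int_{\R^N}\int_{\R^N}\frac{|u(x)-u(y)|}{|x-y|^{N+s}}\di x\di y$ splits the estimate into two terms $I_1+I_2$. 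Writing $x=(x',x_N)$, $y=(y',y_N)$, for $I_1$ I would first integrate in the variables $x'$ and $y'$ at fixed $x_N,y_N$: substituting $z=x'-y'$ and using $|x-y|^2=|z|^2+(x_N-y_N)^2$, the inner integral becomes $\int_{\R^{N-1}}\frac{\di z}{(|z|^2+(x_N-y_N)^2)^{(N+s)/2}}$, which by the change of variables $z=|x_N-y_N|\,w$ (and passing to polar coordinates in $\R^{N-1}$) equals $|x_N-y_N|^{-1-s}$ times the constant $(N-1)\omega_{N-1}\int_0^\infty \frac{\rho^{N-2}}{(1+\rho^2)^{(N+s)/2}}\di\rho=C_{N,s}$ by \eqref{eq:C_Nsp}. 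What remains of $I_1$ is exactly $C_{N,s}\,\|\varphi\|_{L^1(\R^{N-1})}\,[\psi]_{W^{s,1}(\R)}$, matching the first term on the right of \eqref{eq:prod_snorm}.

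For $I_2$, the roles are reversed: I would integrate first in $x_N$ and $y_N$. Since the factor $|\psi(y_N)|$ depends on $y_N$ alone, I would be slightly careless about order and instead bound $\int_{\R}\int_{\R}\frac{|\psi(y_N)|}{|x-y|^{N+s}}\di x_N\di y_N$ by substituting $\tau=x_N-y_N$ and estimating $\int_\R \frac{\di\tau}{(|x'-y'|^2+\tau^2)^{(N+s)/2}}$; by the scaling $\tau=|x'-y'|\,r$ this equals $|x'-y'|^{-(N-1)-s}\int_\R\frac{\di r}{(1+r^2)^{(N+s)/2}}$, and then $\int_\R|\psi(y_N)|\di y_N=\|\psi\|_{L^1(\mathbb H^1_+)}$. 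Note $\int_\R\frac{\di r}{(1+r^2)^{(N+s)/2}}=2\int_0^\infty\frac{\di r}{(1+r^2)^{(N+s)/2}}$; whether the stated bound carries the factor $2$ or is absorbed depends on whether $\psi\ge 0$ is assumed, and I would match the normalization in the displayed inequality. The leftover integral in $x',y'$ is $\int_{\R^{N-1}}\int_{\R^{N-1}}\frac{|\varphi(x')-\varphi(y')|}{|x'-y'|^{(N-1)+s}}\di x'\di y'=[\varphi]_{W^{s,1}(\R^{N-1})}$, giving the second term.

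**Main obstacle.** There is no deep obstacle here — the lemma is a bookkeeping exercise — but the one point requiring care is matching constants exactly: correctly identifying $C_{N,s}$ from the radial integral in the $I_1$ term (using \cref{res:bambi} if needed to double-check), and getting the precise constant $\int_0^\infty(1+t^2)^{-(N+s)/2}\di t$ in the $I_2$ term, including whether a factor of $2$ appears. One should also note the small degenerate case $N=1$, where $\R^{N-1}=\R^0$ is a point, $\varphi$ is a constant, $[\varphi]_{W^{s,1}(\R^0)}=0$, and $C_{1,s}=1$, so \eqref{eq:prod_snorm} reduces to $[u]_{W^{s,1}(\R)}\le\|\varphi\|\,[\psi]_{W^{s,1}(\R)}$, which is consistent. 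A final subtlety worth a sentence: one must check that $u(x)=\varphi(x')\psi(x_N)$ indeed lies in $C^\infty_0(\mathbb H^N_+)$, which is immediate since $\operatorname{supp}u\subseteq\operatorname{supp}\varphi\times\operatorname{supp}\psi$ is a compact subset of $\R^{N-1}\times(0,\infty)=\mathbb H^N_+$.
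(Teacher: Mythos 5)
Your approach is the same as the paper's: the triangle-inequality decomposition
\[
|u(x)-u(y)|\le|\varphi(x')|\,|\psi(x_N)-\psi(y_N)|+|\psi(y_N)|\,|\varphi(x')-\varphi(y')|,
\]
followed by Fubini, the scaling substitution in the orthogonal slice at fixed difference, and identification of the resulting radial integrals as $C_{N,s}$ and $\int_0^\infty(1+t^2)^{-(N+s)/2}\di t$. Your two constant computations are correct, and the $N=1$ sanity check and the remark on the support of $u$ are fine.

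One clarification on the factor of $2$ you flagged in the second term: it has nothing to do with whether $\psi\ge 0$. Since $\operatorname{supp}\psi\subset(0,\infty)$ we have $\int_\R|\psi(y_N)|\di y_N=\|\psi\|_{L^1(\mathbb H^1_+)}$ in any case, and the $2$ arises unconditionally from $\int_\R(1+t^2)^{-(N+s)/2}\di t=2\int_0^\infty(1+t^2)^{-(N+s)/2}\di t$. A careful bookkeeping of the second term therefore gives $2\,\|\psi\|_{L^1(\mathbb H^1_+)}\,[\varphi]_{W^{s,1}(\R^{N-1})}\int_0^\infty(1+t^2)^{-(N+s)/2}\di t$, exactly as in the paper's own intermediate display \eqref{eq:gio}; the displayed bound \eqref{eq:prod_snorm} seems to have dropped this $2$. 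This is harmless downstream, because in \cref{res:comput_compar} the entire term involving $[\varphi_k]_{W^{s,1}(\R^{N-1})}$ is sent to $0$ by the dilation $\varphi_k(x')=\varphi(x'/k)$; nonetheless, your write-up should keep the factor $2$ rather than try to absorb it by a sign assumption on $\psi$.
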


\begin{proof}
To prove~\eqref{eq:prod_dist}, we simply observe that 
\begin{equation*}
\int_{\mathbb H^N_+}\frac{|u(x)|}{x_N^s}\di x
=
\int_{\R^{N-1}}|\varphi(x')|\di x'\int_0^\infty\frac{|\psi(x_N)|}{x_N^s}\di x_N
=
\|\varphi\|_{L^1(\R^{N-1})}\int_{\mathbb H^1_+}\frac{|\psi(t)|}{t^s}\di t.
\end{equation*} 
To see~\eqref{eq:prod_snorm}, we first estimate
\begin{equation}
\label{eq:ric}
\begin{split}
[u]_{W^{s,1}(\R^N)}
&\le 
\int_{\R^N}
\int_{\R^N}
|\varphi(x')|
\,
\frac{|\psi(x_N)-\psi(y_N)|}{|x-y|^{N+s}}
\di x\di y
\\
&
\quad+
\int_{\R^N}
\int_{\R^N}
|\psi(y_N)|
\,
\frac{|\varphi(x')-\varphi(y')|}{|x-y|^{N+s}}
\di x\di y.
\end{split}
\end{equation}
The first term in the right-hand side of~\eqref{eq:ric} rewrites as
\begin{equation}
\label{eq:clag}
\begin{split}
\int_{\R^N}
&
\int_{\R^N}
|\varphi(x')|
\,
\frac{|\psi(x_N)-\psi(y_N)|}{|x-y|^{N+s}}
\di x\di y
\\
&=
\int_{\R}
\int_{\R}
\frac{|\psi(x_N)-\psi(y_N)|}{|x_N-y_N|^{N+s}}
\int_{\R^{N-1}}
|\varphi(x')|
\int_{\R^{N-1}}
\frac{\di y'}{\left(1+\tfrac{|x'-y'|^2}{|x_N-y_N|^2}\right)^{\frac{N+s}{2}}}
\di x'
\di x_N\di y_N
\\
&=
\int_{\R}
\int_{\R}
\frac{|\psi(x_N)-\psi(y_N)|}{|x_N-y_N|^{1+s}}
\int_{\R^{N-1}}
|\varphi(x')|
\int_{\R^{N-1}}
\frac{\di z'}{\left(1+|z'|\right)^{\frac{N+s}2}}
\di x'
\di x_N\di y_N
\\
&=
C_{N,s}\,\|\varphi\|_{L^1(\R^{N-1})}\,[\psi]_{W^{s,1}(\R)},
\end{split}
\end{equation}
while the second term in~\eqref{eq:ric} similarly corresponds to 
\begin{equation}
\label{eq:gio}
\begin{split}
\int_{\R^N}
&
\int_{\R^N}
|\psi(y_N)|
\,
\frac{|\varphi(x')-\varphi(y')|}{|x-y|^{N+s}}
\di x\di y
\\
&=
\int_{\R^{N-1}}
\int_{\R^{N-1}}
\frac{|\varphi(x')-\varphi(y')|}{|x'-y'|^{N+s}}
\int_{\R}
|\psi(y_N)|
\int_{\R}
\frac{\di x_N}{\left(1+\tfrac{|x_N-y_N|^2}{|x'-y'|^2}\right)^{\frac{N+s}2}}
\di y_N
\di x'\di y'
\\
&=
\int_{\R^{N-1}}
\int_{\R^{N-1}}
\frac{|\varphi(x')-\varphi(y')|}{|x'-y'|^{N-1+s}}
\int_{\R}
|\psi(y_N)|
\int_{\R}
\frac{\di z_N}{(1+|z_N|^2)^{\frac{N+s}2}}
\di y_N
\di x'\di y'
\\
&=
2\,\|\psi\|_{L^1(\mathbb H^1_+)}\,[\varphi]_{W^{s,1}(\R^{N-1})}\,\int_0^\infty\frac{\di t}{(1+t^2)^{\frac{N+s}2}}.
\end{split}
\end{equation}
The conclusion hence readily follows by combining~\eqref{eq:ric}, \eqref{eq:clag} and~\eqref{eq:gio}.
\end{proof}

As a consequence of the above results, we get the following partial step towards~\eqref{eq:hs1_H}.

\begin{corollary}
\label{res:comput_compar}
Given $N\ge1$ and $s\in(0,1)$, it holds that
\begin{equation}
\label{eq:comput_compar}
C_{N,s}\,\Lambda_{s,1}
\le 
\mathfrak h_{s,1}(\mathbb H^N_+)
\le 
C_{N,s}\,\mathfrak h_{s,1}(\mathbb H^1_+).
\end{equation}
\end{corollary}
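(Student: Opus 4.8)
The plan is to prove the two inequalities in~\eqref{eq:comput_compar} separately. For the lower bound I would run a limiting argument as $p\to1^+$ built on the explicit value of $\mathfrak h_{s,p}(\mathbb H^N_+)$ from~\cite{BBZ23}, and for the upper bound I would use the product test functions of \cref{res:prod} combined with a scaling argument in the tangential variables.

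For the lower bound $C_{N,s}\,\Lambda_{s,1}\le\mathfrak h_{s,1}(\mathbb H^N_+)$, I would start from the identity~\eqref{eq:sharpfrac_h}, namely $\mathfrak h_{s,p}(\mathbb H^N_+)=C_{N,sp}\,\Lambda_{s,p}$, valid for all $p\in(1,\infty)$ and $s\in(0,1)$. Since $sp\to s\in[0,\infty)$ as $p\to1^+$ and $q\mapsto C_{N,q}$ is continuous there by \cref{res:bambi}, while $\Lambda_{s,p}\to\Lambda_{s,1}$ by~\eqref{eq:Lambda_lim1}, we get $\lim_{p\to1^+}\mathfrak h_{s,p}(\mathbb H^N_+)=C_{N,s}\,\Lambda_{s,1}$. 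On the other hand, $\mathbb H^N_+$ is convex, so \cref{res:stability} gives $\limsup_{p\to1^+}\mathfrak h_{s,p}(\mathbb H^N_+)\le\mathfrak h_{s,1}(\mathbb H^N_+)$, and combining the two displays yields the claimed lower bound.

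For the upper bound $\mathfrak h_{s,1}(\mathbb H^N_+)\le C_{N,s}\,\mathfrak h_{s,1}(\mathbb H^1_+)$, the case $N=1$ is trivial since $C_{1,s}=1$, so I would assume $N\ge2$. Fix $\psi\in C^\infty_0(\mathbb H^1_+)$ and $\varphi\in C^\infty_0(\R^{N-1})$, both non-negative and not identically zero, and for $R>0$ set $u_R(x)=\varphi(x'/R)\,\psi(x_N)\in C^\infty_0(\mathbb H^N_+)$. Using the scaling identities $\|\varphi(\cdot/R)\|_{L^1(\R^{N-1})}=R^{N-1}\|\varphi\|_{L^1(\R^{N-1})}$ and $[\varphi(\cdot/R)]_{W^{s,1}(\R^{N-1})}=R^{N-1-s}[\varphi]_{W^{s,1}(\R^{N-1})}$ together with~\eqref{eq:prod_dist} and~\eqref{eq:prod_snorm}, the $R^{N-1}$ in numerator and denominator cancels and the Rayleigh quotient of $u_R$ is bounded by
\[
\frac{[u_R]_{W^{s,1}(\R^N)}}{\displaystyle\int_{\mathbb H^N_+}|u_R|\,x_N^{-s}\di x}
\le
C_{N,s}\,\frac{[\psi]_{W^{s,1}(\R)}}{\displaystyle\int_0^\infty\psi(t)\,t^{-s}\di t}
+
c(\varphi,\psi,N)\,R^{-s},
\]
where $c(\varphi,\psi,N)$ collects the remaining (finite) factors, the integral $\int_0^\infty(1+t^2)^{-(N+s)/2}\di t$ being finite as $N+s>1$. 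Since $\mathfrak h_{s,1}(\mathbb H^N_+)$ is bounded above by the left-hand side by definition, letting $R\to\infty$ and then taking the infimum over admissible $\psi$—which one may restrict to be non-negative, since $[|\psi|]_{W^{s,1}(\R)}\le[\psi]_{W^{s,1}(\R)}$—gives the upper bound.

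I expect the only genuinely essential point to be the interplay in the lower bound: \cref{res:stability} supplies merely the one-sided estimate $\limsup_{p\to1^+}\mathfrak h_{s,p}(\mathbb H^N_+)\le\mathfrak h_{s,1}(\mathbb H^N_+)$, so it is crucial to have the exact value of $\mathfrak h_{s,p}(\mathbb H^N_+)$ for $p>1$ in order to identify the limit and transfer it to $p=1$. In the upper bound the delicate (but routine) ingredient is the choice of the scaling $\varphi\mapsto\varphi(\cdot/R)$, which renders the cross term in~\eqref{eq:prod_snorm} negligible as $R\to\infty$ while leaving the leading term—precisely the one-dimensional Rayleigh quotient—unaffected.
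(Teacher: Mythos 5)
Your proof is correct and follows essentially the same strategy as the paper: the lower bound combines $\mathfrak h_{s,p}(\mathbb H^N_+)=C_{N,sp}\Lambda_{s,p}$ with the continuity of $q\mapsto C_{N,q}$ (\cref{res:bambi}), the limit $\Lambda_{s,p}\to\Lambda_{s,1}$ (\cref{res:lambada}), and the $\limsup$ estimate~\eqref{eq:stab_h}, while the upper bound uses the product test functions of \cref{res:prod} with tangential rescaling of $\varphi$ to make the cross term vanish in the limit. The only cosmetic differences are the scaling parameter ($R$ vs.\ the paper's $k\in\mathbb N$) and your explicit remarks about the trivial case $N=1$ and the harmless restriction to non-negative $\psi$, both of which are fine.
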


\begin{proof}
Owing to~\eqref{eq:stab_h}, \eqref{eq:sharpfrac_h}, \cref{res:bambi} and \cref{res:lambada}, we plainly get that
\begin{equation*}
\mathfrak h_{s,1}(\mathbb H^N_+)
\ge 
\limsup_{p\to1^+}
\mathfrak h_{s,p}(\mathbb H^N_+)
=
\lim_{p\to1^+}
C_{N,sp}\,\Lambda_{s,p}
=
C_{N,s}\,\Lambda_{s,1},
\end{equation*}
proving the first inequality in~\eqref{eq:comput_compar}.

To prove the second inequality in~\eqref{eq:comput_compar}, instead, we argue as follows.
Let $\varphi\in C^\infty_0(\R^{N-1})$ and $\psi\in C^\infty_0(\mathbb H^1_+)$ be fixed.
For each $k\in\mathbb N$, we define $\varphi_k(x')=\varphi\left(\frac{x'}k\right)$ for all $x'\in\R^{N-1}$ and $u_k(x)=\varphi_k(x')\,\psi(x_N)$ for all $x=(x',x_N)\in\mathbb \R^{N-1}\times(0,\infty)=\mathbb H^N_+$.
Owing to \cref{res:prod} and observing that
\begin{equation*}
[\varphi_k]_{W^{s,1}(\R^{N-1})}
=
k^{N-1-s}\,[\varphi]_{W^{s,1}(\R^{N-1})}
\quad
\text{and}
\quad
\|\varphi_k\|_{L^1(\R^{N-1})}
=
k^{N-1}
\,
\|\varphi\|_{L^1(\R^{N-1})},
\end{equation*}
we plainly get that 
\begin{equation*}
\begin{split}
\mathfrak h_{s,1}(\mathbb H^N_+)
&\le 
\frac{[u_k]_{W^{s,1}(\R^N)}}{\displaystyle\int_{\mathbb H^N_+}\frac{|u_k(x)|}{x_N^s}\di x}
\\
&\le 
C_{N,s}\,\frac{[\psi]_{W^{s,1}(\R)}}{\displaystyle\int_{\mathbb H^1_+}\frac{|\psi(t)|}{t^s}\di t}
+
\frac{[\varphi_k]_{W^{s,1}(\R^{N-1})}}{\|\varphi_k\|_{L^1(\R^{N-1})}}
\,
\frac{\|\psi\|_{L^1(\mathbb H^1_+)}\displaystyle\int_0^\infty\frac{\di t}{(1+t^2)^{\frac{N+s}2}}}{\displaystyle\int_{\mathbb H^1_+}\frac{|\psi(t)|}{t^s}\di t}
\\
&=
C_{N,s}\,\frac{[\psi]_{W^{s,1}(\R)}}{\displaystyle\int_{\mathbb H^1_+}\frac{|\psi(t)|}{t^s}\di t}
+
\frac{1}{k^s}\,\frac{[\varphi]_{W^{s,1}(\R^{N-1})}}{\|\varphi\|_{L^1(\R^{N-1})}}
\,
\frac{\|\psi\|_{L^1(\mathbb H^1_+)}\displaystyle\int_0^\infty\frac{\di t}{(1+t^2)^{\frac{N+s}2}}}{\displaystyle\int_{\mathbb H^1_+}\frac{|\psi(t)|}{t^s}\di t}
\end{split}
\end{equation*} 
for all $k\in\mathbb N$.
The second inequality in~\eqref{eq:comput_compar} hence follows first by passing to the limit as $k\to\infty$ and then by taking the infimum with respect to $\psi\in C^\infty_0(\mathbb H^1_+)$.
\end{proof}

At this point, the proof of \cref{res:hs1} essentially reduces to a straight computation based on~\eqref{eq:persseg} and the fact that, for every $s\in(0,1)$,
\begin{equation}
\label{eq:volsseg}
V_{s,(a,b)}((a,b))
=
\frac{2^s}{1-s}\,(b-a)^{1-s}
\end{equation}
whenever $a,b\in\R$ are such that $a<b$.

\begin{proof}[Proof of \cref{res:hs1}]
For $N=1$, owing to~\eqref{eq:persseg}, \eqref{eq:volsseg} and \cref{res:cheeger_relax}, we infer that
\begin{equation*}
\mathfrak h_{s,1}(\mathbb H^1_+)
=
\mathfrak g_s^*(\mathbb H^1_+)
\le 
\frac{P_s((0,1))}{V_{s,\mathbb H^1_+}((0,1))}
=
\frac{4}{s}
\end{equation*}
for all $s\in(0,1)$.  
The validity of~\eqref{eq:hs1_H} hence follows owing to~\eqref{eq:Lambda_compare} and~\eqref{eq:comput_compar}. 
Consequently, we get~\eqref{eq:hs1} by simply passing to the limit as $p\to1^+$ in~\eqref{eq:frachardy_estim}.
Finally, we prove that $\mathfrak h_{s,1}(\mathbb H^1_+)=\frac4s$ is attained in $\mathcal W^{s,1}_0(\mathbb H^1_+)$.
Indeed, thanks to \cref{res:cheeger_relax}, it is enough to check that $\mathbf 1_{(0,1)}\in\mathcal W^{s,1}_0(\mathbb H^1_+)$.
This, in turn, follows by observing that 
\begin{equation*}
\lim_{t\to0^+}
[\mathbf 1_{(t,1)}-\mathbf 1_{(0,1)}]_{W^{s,1}(\R)}
=
\lim_{t\to0^+}
P_s((0,t))=0
\end{equation*} 
by~\eqref{eq:persseg}, and that $1_{(t,1)}\in\mathcal W^{s,1}_0(\mathbb H^1_+)$ for every $t\in(0,1)$, since $u_{\varepsilon,t}=1_{(t,1)}*\varrho_\varepsilon\in C^\infty_0(\mathbb H^1_+)$ are converging to $1_{(t,1)}$ in $W^{s,1}(\R)$ as $\varepsilon\to0^+$ by~\cite{L23}*{Th.~6.65} whenever $(\varrho_\varepsilon)_{\varepsilon>0}\subseteq C^\infty_0(\R)$ is a family of standard non-negative mollifiers.
\end{proof}

As an immediate consequence of \cref{res:hs1,res:bambi,res:lambada}, we get the following stability result, whose simple proof is omitted.

\begin{corollary}
\label{res:limh}
Given $N\ge1$ and $s\in(0,1)$, it holds that
\begin{equation*}
\lim_{p\to1^+}
\mathfrak h_{s,p}(\mathbb H^N_+)
=
\mathfrak h_{s,1}(\mathbb H^N_+).
\end{equation*}
\end{corollary}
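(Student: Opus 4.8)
The plan is to read off the limit directly from the closed-form expression for the sharp constant on the half-space, using only the continuity and convergence properties of the constants $C_{N,q}$ and $\Lambda_{s,p}$ that have already been established.

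First I would recall that, by~\eqref{eq:sharpfrac_h}, for every $p\in(1,\infty)$ and $s\in(0,1)$ one has the identity $\mathfrak h_{s,p}(\mathbb H^N_+)=C_{N,sp}\,\Lambda_{s,p}$, so that the quantity whose limit we want is simply $\lim_{p\to1^+}C_{N,sp}\,\Lambda_{s,p}$. Since $s\in(0,1)$, for $p$ close to $1$ the exponent $sp$ stays in $(0,1)\subset[0,\infty)$ and tends to $s$ as $p\to1^+$; hence the continuity of the map $q\mapsto C_{N,q}$ on $[0,\infty)$ from \cref{res:bambi} gives $C_{N,sp}\to C_{N,s}$, while~\eqref{eq:Lambda_lim1} in \cref{res:lambada} gives $\Lambda_{s,p}\to\Lambda_{s,1}$. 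Passing to the limit in the product, I obtain $\lim_{p\to1^+}\mathfrak h_{s,p}(\mathbb H^N_+)=C_{N,s}\,\Lambda_{s,1}$.

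It then remains only to identify the right-hand side with the target: by~\eqref{eq:hs1_H} in \cref{res:hs1} we have $\mathfrak h_{s,1}(\mathbb H^N_+)=\Lambda_{s,1}\,C_{N,s}$ (with $\Lambda_{s,1}=4/s$), which is exactly the limit just computed. There is no genuine obstacle here: the corollary is a bookkeeping consequence of \cref{res:hs1}, \cref{res:bambi}, and \cref{res:lambada}, and the only point worth a moment's attention is that $sp$ remains in the range $(0,1)$ where both~\eqref{eq:sharpfrac_h} holds and $C_{N,\cdot}$ is continuous, which is immediate from $s<1$. (Alternatively, one could obtain $\limsup_{p\to1^+}\mathfrak h_{s,p}(\mathbb H^N_+)\le\mathfrak h_{s,1}(\mathbb H^N_+)$ from~\eqref{eq:stab_h} and the reverse inequality from the first estimate in~\eqref{eq:comput_compar} together with \cref{res:lambada} and \cref{res:bambi}, but the argument via the explicit formula is the most direct.)
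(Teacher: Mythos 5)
Your argument is correct and is exactly the "simple proof" the paper leaves implicit: the corollary is stated as an immediate consequence of \cref{res:hs1,res:bambi,res:lambada}, which is precisely the chain you use (explicit formula~\eqref{eq:sharpfrac_h}, continuity of $C_{N,\cdot}$, the limit~\eqref{eq:Lambda_lim1}, and identification via~\eqref{eq:hs1_H}). Nothing to add.
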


\subsection{Proof of \texorpdfstring{\cref{res:homo}}{Theorem 1.2}}

We begin with the case $N=1$ in \cref{res:homo}.

\begin{lemma}
\label{res:segment}
It holds that $\mathfrak h_{s,1}(\mathbb B^1)=\frac{2^{2-s}}{s}$ for all $s\in(0,1)$.
Moreover, $\mathfrak h_{s,1}(\mathbb B^1)$ is attained in $\mathcal W^{s,1}_0(\mathbb B^1)$ for all $s\in(0,1)$.
\end{lemma}

\begin{proof}
Owing to~\eqref{eq:persseg}, \eqref{eq:volsseg} and \cref{res:cheeger_omega}, we can estimate
\begin{equation*}
\mathfrak h_{s,1}(\mathbb B^1)
\le 
\frac{P_s(\mathbb B^1)}{V_{s,\mathbb B^1}(\mathbb B^1)}
=
\frac{2^{2-s}}{s}
\end{equation*} 
for all $s\in(0,1)$. 
To prove the converse inequality, 
let $E\subseteq\mathbb B^1$ be such that $|E|>0$ and define $E^\star=\left(-\frac{|E|}{2},\frac{|E|}{2}\right)\subseteq\mathbb B^1$ as in~\eqref{eq:ballification}.
By~\eqref{eq:fraciso}, we have that
\begin{equation}
\label{eq:scacco}
P_s(E) 
\ge 
P_s(E^\star) 
= 
P_s((0,1))\,|E|^{1-s}
=
\frac{4}{s(1-s)}\,|E|^{1-s}
\end{equation} 
for all $s\in(0,1)$.
On the other hand, by~\eqref{eq:hl} and \cref{res:rear}, we have that 
\begin{equation}
\label{eq:matto}
V_{s,\mathbb B^1}(E)
=
\int_{\R}
\mathbf{1}_E(x)\,\delta_s(x)\di x
\le 
\int_{\R}
\mathbf{1}_{E^\star}(x)\,\delta_s^\star(x)\di x
=
\int_{-\frac{|E|}2}^{\frac{|E|}2}\frac{\di x}{|x|^s}
=
\frac{2^s}{(1-s)}\,|E|^{1-s}
.
\end{equation}
By combining~\eqref{eq:scacco} and~\eqref{eq:matto}, we thus get that 
\begin{equation*}
\frac{P_s(E)}{V_{s,\mathbb B^1}(E)}
\ge 
\frac{4\,|E|^{1-s}}{s(1-s)}
\,
\frac{(1-s)}{2^s\,|E|^{1-s}}
=
\frac{2^{2-s}}{s}
\end{equation*}
for all $s\in(0,1)$ whenever $E\subseteq\mathbb B^1$ is such that $|E|>0$, readily yielding the equality $\mathfrak h_{s,1}(\mathbb B^1)=\frac{2^{2-s}}{s}$ in virtue of \cref{res:cheeger_relax}.
To conclude, again by \cref{res:cheeger_relax}, it is enough to check that $\mathbf 1_{\mathbb B^1}\in\mathcal W^{s,1}_0(\mathbb B^1_+)$.
As in the last part of the proof of \cref{res:hs1}, this simply follows by observing that 
\begin{equation*}
\lim_{t\to0^+}
[\mathbf 1_{(-1+t,1-t)}-\mathbf 1_{\mathbb B^1}]_{W^{s,1}(\R)}
= 
\lim_{t\to0^+}
P_s((-1,-1+t)\cup(1-t,1))=0
\end{equation*}
by~\eqref{eq:submodularity} and~\eqref{eq:persseg} and that $\mathbf 1_{(-1+t,1-t)}\in\mathcal W^{s,1}_0(\mathbb B^1)$ for every $t\in(0,1)$ by a plain convolution argument. The proof is complete.
\end{proof}

\begin{remark}
\label{rem:lucky_1d}
The strategy of the proof of \cref{res:segment} is ineffectual in the higher dimensional case $N\ge2$.
Indeed, by applying \cref{rem:rear2} below, we analogously get that
\begin{equation}
\label{eq:badluck}
\frac{P_s(E)}{V_{s,\mathbb B^N}(E)}
\ge
\frac{P_s(\mathbb B^N)}{|\mathbb B^N|^{1-\frac sN}}
\,
\frac{|E|^{1-\frac sN}}{\displaystyle\int_{E^\star}\left(1-\left(1-|x|^N\right)^\frac1N\right)^{-s}\di x}
\end{equation}
whenever $E\subseteq\mathbb B^N$ is such that $|E|>0$, where $E^\star$ is as in~\eqref{eq:ballification}.
However, we have that
\begin{equation*}
\inf_{R\in(0,1]}
\frac{|B_R|^{1-\frac sN}}{\displaystyle\int_{B_R}\left(1-\left(1-|x|^N\right)^\frac1N\right)^{-s}\di x}
=0,
\end{equation*}
so that~\eqref{eq:badluck} leads to the trivial lower bound $\mathfrak h_{s,1}(\mathbb B^N)\ge0$.
\end{remark}

We now pass to the case $N\ge2$ in \cref{res:homo}.

\begin{proof}[Proof of \cref{res:homo}]
Thanks to \cref{res:segment}, we can assume $N\ge2$.
Let 
\begin{equation*}
r_\Omega=\sup_{x\in\Omega}d_\Omega(x)\in(0,\infty)
\end{equation*}
be the \emph{inradius} of $\Omega$.
By~\cite{L16}*{Th.~1.2} (see also~\cite{L20}), we have that 
\begin{equation}
\label{eq:larson}
P(\Omega_t)
\ge 
\left(1-\frac{t}{r_\Omega}\right)^{N-1}P(\Omega)
\quad
\text{for all}\ t\in(0,r_\Omega),
\end{equation}
where 
\begin{equation*}
\Omega_t=\{x\in\Omega : d_\Omega(x)>t\}.
\end{equation*}
By scale invariance of~\eqref{eq:h_sp}, we can assume that $r_\Omega=1$ without loss of generality. 
Since $d_\Omega$ is $1$-Lipschitz, $|\nabla d_\Omega|\le 1$ a.e.\ in $\Omega$ and thus, by the usual coarea formula, we have that
\begin{equation*}
\begin{split}
V_{s,\Omega}(\Omega)
=
\int_{\Omega}\frac{1}{d_\Omega^s}\di x
\ge 
\int_{\Omega}\frac{|\nabla d_\Omega|}{d_\Omega^s}\di x
=
\int_0^1
\int_{\{d_\Omega=t\}}
\frac{1}{t^s}\di\mathscr H^{N-1}\di t
=
\int_0^1\frac{P(\Omega_t)}{t^s}\di t,
\end{split}
\end{equation*}
since 
\begin{equation*}
\partial\Omega_t=\{x\in\Omega : d_\Omega(x)=t\}
\quad
\text{and}
\quad
P(\Omega_t)=\mathscr H^{N-1}(\partial\Omega_t)
\quad
\text{for all}\
t\in(0,1).
\end{equation*}
Owing to~\eqref{eq:larson}, we hence get that
\begin{equation*}
V_{s,\Omega}(\Omega)
\ge
\int_0^1\frac{P(\Omega_t)}{t^s}\di t
\ge 
P(\Omega)\int_0^1t^{-s}\,(1-t)^{N-1}\di t
=
P(\Omega)
\,
\operatorname{B}(N,1-s).
\end{equation*} 
By \cref{res:cheeger_omega} and~\eqref{eq:betag}, we can thus estimate
\begin{equation}
\label{eq:baggio}
\mathfrak h_{s,1}(\Omega)
\le
\frac{P_s(\Omega)}{V_{s,\Omega}(\Omega)}
\le 
\frac{(1-s)\,P_s(\Omega)}{P(\Omega)}
\,
\frac{\Gamma(N+1-s)}{\Gamma(N)\,\Gamma(2-s)}
\end{equation}
and so, thanks to~\eqref{eq:hs1_H} and \eqref{eq:bambi}, we get that
\begin{equation*}
\begin{split}
\frac{\mathfrak h_{s,1}(\Omega)}{\mathfrak h_{s,1}(\mathbb H^N_+)}
\le 
\frac{(1-s)\,P_s(\Omega)}{P(\Omega)}
\,
\frac{\Gamma(N+1-s)}{\Gamma(N)\,\Gamma(2-s)}
\,
\frac{s\,\Gamma\left(\frac{N+s}2\right)}{4\,\pi^{\frac{N-1}2}\,\Gamma\left(\frac{s+1}2\right)}
\end{split}
\end{equation*}
for all $s\in(0,1)$.
Hence, by~\eqref{eq:davila} and~\eqref{eq:ballo}, we infer that
\begin{equation*}
\begin{split}
\limsup_{s\to1^-}
\frac{\mathfrak h_{s,1}(\Omega)}{\mathfrak h_{s,1}(\mathbb H^N_+)}
&\le
\lim_{s\to1^-}
\frac{(1-s)\,P_s(\Omega)}{P(\Omega)}
\,
\frac{\Gamma(N+1-s)}{\Gamma(N)\,\Gamma(2-s)}
\,
\frac{s\,\Gamma\left(\frac{N+s}2\right)}{4\,\pi^{\frac{N-1}2}\,\Gamma\left(\frac{s+1}2\right)}
\\
&=
2\,\omega_{N-1}
\,
\frac{\Gamma(N)}{\Gamma(N)\,\Gamma(1)}
\,
\frac{\Gamma\left(\frac{N+1}2\right)}{4\,\pi^{\frac{N-1}2}\,\Gamma\left(1\right)}
=
\frac{1}{2}
\end{split}
\end{equation*}
and the conclusion immediately follows by the first inequality in~\eqref{eq:hs1}.
\end{proof}

\begin{remark}
\label{rem:limits}
Given $N\ge2$ and a (non-empty) bounded open convex set $\Omega\subsetneq\R^N$, from~\eqref{eq:baggio} and~\eqref{eq:davila} we get that 
\begin{equation*}
\limsup_{s\to1^-}
\mathfrak h_{s,1}(\Omega)
\le
\lim_{s\to1^-}
\frac{(1-s)\,P_s(\Omega)}{P(\Omega)}
\,
\frac{\Gamma(N+1-s)}{\Gamma(N)\,\Gamma(2-s)}
=
2\,\omega_{N-1},
\end{equation*}
while, thanks to \cref{res:hs1,res:bambi}, 
\begin{equation*}
\liminf_{s\to1^-}
\mathfrak h_{s,1}(\Omega)
\ge 
\frac12
\lim_{s\to1^-}
\mathfrak h_{s,1}(\mathbb H^N_+)
=
2\,C_{N,1}
=
2\,\omega_{N-1}.
\end{equation*} 
This implies that~\eqref{eq:homo_lim} can be equivalently reformulated as
\begin{equation*}
\lim_{s\to1^-}
\mathfrak h_{s,1}(\Omega)
=
2\,\omega_{N-1}
<
4\,\omega_{N-1}
=
\lim_{s\to1^-}
\mathfrak h_{s,1}(\mathbb H^N_+).
\end{equation*}
Noteworthy, thanks to~\cite{MS02}*{Th.~3} (see also~\cite{MS03}), \cref{res:cheeger_omega,res:bambi,res:hs1}, we also have that 
\begin{equation*}
\limsup_{s\to0^+}
s\,\mathfrak h_{s,1}(\Omega)
\le 
\lim_{s\to0^+}
\frac{s\,P_s(\Omega)}{V_{s,\Omega}(\Omega)}
=
\frac{2\,N\,\omega_N\,|\Omega|}{|\Omega|}
=
2\,N\,\omega_N
=
4\,C_{N,0}
=
\lim_{s\to0^+}
s\,\mathfrak h_{s,1}(\mathbb H^N_+).
\end{equation*}
Due to~\eqref{eq:homo_1}, in the case $N=1$, the above improves to \begin{equation*}
\lim_{s\to0^+}
s\,\mathfrak h_{s,1}(\Omega)
=
\lim_{s\to0^+}
s\,\mathfrak h_{s,1}(\mathbb H^1_+),
\end{equation*} 
but we do not know if this is also the case for $N\ge2$.
\end{remark}

As already mentioned in the introduction, the limit in~\eqref{eq:homo_lim} can be slightly improved if $\Omega$ is an open ball by exploiting the main result of~\cite{G20}, as follows.

\begin{proposition}
\label{res:ball}
If $N\ge2$, then
\begin{equation}
\label{eq:ball_2}
\mathfrak h_{s,1}(\mathbb B^N)
\le
\frac{\pi^{\frac12}}{\Gamma\left(1-\frac s2\right)\,\Gamma\left(\frac12+\frac s2\right)}
\,
\frac{\Gamma\left(\frac{N+s}{2}\right)}{\Gamma\left(\frac{N-s}{2}\right)}
\,
\frac{\Gamma(N-s)}{\Gamma(N)}
\,
\mathfrak h_{s,1}(\mathbb H^N_+)
\end{equation}
for all $s\in(0,1)$.
\end{proposition}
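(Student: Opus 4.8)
The plan is to use \cref{res:cheeger} with the single competitor $E=\mathbb B^N$. Since $\mathbb B^N\subseteq\mathbb B^N$ has positive measure, it is an admissible set in~\eqref{eq:cheeger}, so the characterization~\eqref{eq:cheeger_eq} immediately yields
\begin{equation*}
\mathfrak h_{s,1}(\mathbb B^N)
=
\mathfrak g_s(\mathbb B^N)
\le
\frac{P_s(\mathbb B^N)}{V_{s,\mathbb B^N}(\mathbb B^N)}.
\end{equation*}
Hence the whole statement reduces to evaluating the two quantities on the right-hand side in closed form and comparing the outcome with the expression for $\mathfrak h_{s,1}(\mathbb H^N_+)$ provided by \cref{res:hs1,res:bambi}.

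For the weighted volume, since $d_{\mathbb B^N}(x)=1-|x|$ on $\mathbb B^N$, passing to polar coordinates and using $\mathscr H^{N-1}(\partial\mathbb B^N)=N\,\omega_N$ gives
\begin{equation*}
V_{s,\mathbb B^N}(\mathbb B^N)
=
\int_{\mathbb B^N}\frac{\di x}{(1-|x|)^{s}}
=
N\,\omega_N\int_0^1\frac{r^{N-1}}{(1-r)^{s}}\di r
=
N\,\omega_N\,\mathrm B(N,1-s),
\end{equation*}
which rewrites through~\eqref{eq:betag} as a ratio of Gamma functions. For the fractional perimeter $P_s(\mathbb B^N)$ I would invoke the main result of~\cite{G20}, which yields its explicit value (equivalently, the sharp constant in the fractional isoperimetric inequality~\eqref{eq:fraciso}, recalling that $|\mathbb B^N|=\omega_N$). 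Inserting these formulas, together with $\mathfrak h_{s,1}(\mathbb H^N_+)=\tfrac{4}{s}\,C_{N,s}$ from~\eqref{eq:hs1_H} and the closed form~\eqref{eq:bambi} of $C_{N,s}$, reduces the proof of~\eqref{eq:ball_2} to verifying an identity between products of Gamma functions.

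The remaining step is purely computational: once the three closed forms are inserted, the $s$-dependent prefactors cancel, the powers of $\pi$ collapse to $\pi^{1/2}$, and the identity that is left to check follows from the functional equation $\Gamma(z+1)=z\,\Gamma(z)$ applied with $z=N-s$ and $z=\tfrac{N-s}{2}$ --- namely from $\Gamma(N+1-s)=(N-s)\,\Gamma(N-s)$ and $\Gamma\big(\tfrac{N-s}{2}+1\big)=\tfrac{N-s}{2}\,\Gamma\big(\tfrac{N-s}{2}\big)$ --- possibly combined with the Legendre duplication formula only insofar as it is needed to reconcile the normalization in which~\cite{G20} records $P_s(\mathbb B^N)$. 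I do not anticipate any genuine difficulty beyond this special-function bookkeeping. It is worth stressing that, in contrast with the one-dimensional case of \cref{res:segment}, the bound~\eqref{eq:ball_2} is not claimed to be optimal: there is no reason for $\mathbb B^N$ to realize the infimum in~\eqref{eq:cheeger} relative to the weight $d_{\mathbb B^N}^{-s}$, and accordingly no matching lower bound is pursued.
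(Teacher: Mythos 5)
Your proposal is correct and follows exactly the same route as the paper's proof: bound $\mathfrak h_{s,1}(\mathbb B^N)=\mathfrak g_s(\mathbb B^N)$ from above via the single competitor $E=\mathbb B^N$ in \cref{res:cheeger}, compute $V_{s,\mathbb B^N}(\mathbb B^N)=N\omega_N\,\mathrm B(N,1-s)$ by polar coordinates, insert the closed form of $P_s(\mathbb B^N)$ from~\cite{G20}*{Prop.~1.1}, and divide by $\mathfrak h_{s,1}(\mathbb H^N_+)=\tfrac{4}{s}C_{N,s}$ using~\eqref{eq:bambi}. The Gamma-function bookkeeping you defer is indeed just the two applications of $\Gamma(z+1)=z\,\Gamma(z)$ you name (Legendre duplication is not in fact needed, as the $\pi$-powers cancel directly).
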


\begin{proof}[Proof of \cref{res:ball}]
Since $d_{\mathbb B^N}(x)=1-|x|$ for all $x\in\mathbb B^N$, we can compute
\begin{equation*}
\begin{split}
V_{s,\mathbb B^N}(\mathbb B^N)
&=
\int_{\mathbb B^N}(1-|x|)^{-s}\di x
=
N\omega_N
\,
\mathrm B(N,1-s)
=
N\omega_N
\,
\frac{\Gamma(N)\,\Gamma(1-s)}{\Gamma(N+1-s)}
\end{split}
\end{equation*}
for all $s\in(0,1)$. 
Therefore, by \cref{res:cheeger_omega} and recalling~\eqref{eq:garofalo} and~\eqref{eq:ballo}, we get that
\begin{equation*}
\begin{split}
\mathfrak h_{s,1}(\mathbb B^N)
&\le
\frac{P_s(\mathbb B^N)}{V_{s,\mathbb B^N}(\mathbb B^N)}
\\
&=
\omega_N^{1-\frac sN}
\,
\frac{N\,\pi^{\frac{N+s}2}\,\Gamma(1-s)}{\frac s2\,\Gamma\left(\frac N2+1\right)^{\frac{s}N}\,\Gamma\left(1-\frac s2\right)\,\Gamma\left(\frac{N-s}{2}+1\right)} 
\,
\frac{\Gamma(N+1-s)}{N\omega_N\,\Gamma(N)\,\Gamma(1-s)}
\\
&=
\omega_N^{-\frac sN}
\,
\frac{\pi^{\frac{N+s}2}}{\frac s2\,\Gamma\left(\frac N2+1\right)^{\frac{s}N}\,\Gamma\left(1-\frac s2\right)\,\Gamma\left(\frac{N-s}{2}+1\right)}
\,
\frac{\Gamma(N+1-s)}{\Gamma(N)}
\\
&=
\frac{2\,\pi^{\frac{N}2}\,\Gamma(N+1-s)}{s\,\Gamma(N)\,\Gamma\left(1-\frac s2\right)\,\Gamma\left(\frac{N-s}{2}+1\right)}
\end{split}
\end{equation*}
for all $s\in(0,1)$.
We can thus estimate
\begin{equation*}
\begin{split}
\frac{\mathfrak h_{s,1}(\mathbb B^N)}{\mathfrak h_{s,1}(\mathbb H^N_+)}
&\le
\frac{2\,\pi^{\frac{N}2}\,\Gamma(N+1-s)}{s\,\Gamma(N)\,\Gamma\left(1-\frac s2\right)\,\Gamma\left(\frac{N-s}{2}+1\right)}
\,
\frac{s\,\Gamma\left(\frac{N+s}2\right)}{4\,\pi^{\frac{N-1}2}\,\Gamma\left(\frac{s+1}2\right)}
\\
&=
\frac{\pi^{\frac12}}{\Gamma\left(1-\frac s2\right)\,\Gamma\left(\frac12+\frac s2\right)}
\,
\frac{\Gamma\left(\frac{N+s}{2}\right)}{\Gamma\left(\frac{N-s}{2}\right)}
\,
\frac{\Gamma(N-s)}{\Gamma(N)}
\end{split}
\end{equation*}
for all $s\in(0,1)$, proving~\eqref{eq:ball_2} and concluding the proof.
\end{proof}

\subsection{Proof of \texorpdfstring{\cref{res:salto}}{Corollary 1.3}}

We can now study the sharp constant in~\eqref{eq:h_sp} for $s\in(0,1)$ and $p\in(1,\infty)$ such that $sp<1$.

\begin{proof}[Proof of \cref{res:salto}]
By~\eqref{eq:frachardy_estim}, \cref{res:hs1}, \eqref{eq:stab_h} and \cref{res:limh}, we get that
\begin{equation*}
\frac12
\le 
\limsup_{p\to1^+}
\frac{\mathfrak h_{s,p}(\Omega)}{\mathfrak h_{s,p}(\mathbb H^N_+)}
\le 
\frac{\mathfrak h_{s,1}(\Omega)}{\mathfrak h_{s,1}(\mathbb H^N_+)}
\end{equation*}
for all $s\in(0,1)$.
The conclusion hence plainly follows from \cref{res:homo}.
\end{proof}

\subsection{Proof of \texorpdfstring{\cref{res:fs}}{Corollary 1.5}}

We now pass to the study of the non-convex case $\Omega=\R^N\setminus\{0\}$, (re)proving~\cite{FS08}*{Th.~1.1} in the case $p=1$.

\begin{proof}[Proof of \cref{res:fs}]
Let us set $\mathbb U^N=\R^N\setminus\{0\}$ for brevity.
Since 
\begin{equation*}
V_{s,\mathbb U^N}(\mathbb B^N)
=
\int_{\mathbb B^N}\frac{\di x}{|x|^s}
=
N\,\omega_N
\int_0^1 r^{N-1-s}\di r
=
\frac{N\,\omega_N}{N-s},
\end{equation*}
by \cref{res:cheeger_s-neg} (applied with $U=\R^N$ and $F=\{0\}$) we can thus estimate
\begin{equation}
\label{eq:culo}
\mathfrak g_s(\mathbb U^N)
\le 
\frac{P_s(\mathbb B^N)}{V_{s,\mathbb U^N}(\mathbb B^N)}
=
P_s(\mathbb B^N)
\,
\frac{N-s}{N\,\omega_N}
\end{equation}
for all $s\in(0,1)$.
On the other hand, given a measurable set $E\subseteq\R^N$ such that $|E|\in(0,\infty)$, for all $s\in(0,1)$, by~\eqref{eq:fraciso}, we can estimate
\begin{equation*}
P_s(E)
\ge 
\frac{P_s(\mathbb B^N)}{|\mathbb B^N|^{1-\frac sN}}
\,
|E|^{1-\frac sN}
=
\frac{P_s(\mathbb B^N)}{\omega_N^{1-\frac sN}}
\,
|E|^{1-\frac sN}
,
\end{equation*}
while, by~\eqref{eq:hl} and owing to the fact that $x\mapsto|x|^{-s}$ is radially symmetric decreasing,
\begin{equation*}
V_{s,\mathbb U^N}(E)
=
\int_E
\frac{1}{|x|^s}
\di x
\le 
\int_{E^\star}
\frac{1}{|x|^s}
\di x
=
\frac{N\,\omega_N}{N-s}\,\left(\frac{|E|}{\omega_N}\right)^{1-\frac{s}{N}}
\end{equation*}
where $E^*$ is as in~\eqref{eq:ballification}.
Thus, we get that
\begin{equation}
\label{eq:camicia}
\frac{P_s(E)}{V_{s,\mathbb U^N}(E)}
\ge 
\frac{P_s(\mathbb B^N)}{\omega_N^{1-\frac sN}}
\,
|E|^{1-\frac sN}
\,
\frac{N-s}{N\,\omega_N}
\,
\left(\frac{\omega_N}{|E|}\right)^{1-\frac{s}{N}}
=
P_s(\mathbb B^N)
\,
\frac{N-s}{N\,\omega_N}\end{equation}
for all $s\in(0,1)$, whenever $E\subseteq\R^N$ is such that $|E|\in(0,\infty)$. 
By combining~\eqref{eq:culo} and~\eqref{eq:camicia}, by~\cite{G20}*{Prop.~1.1} (see~\eqref{eq:garofalo}), and recalling~\eqref{eq:ballo}, we conclude that 
\begin{equation*}
\mathfrak g_s(\mathbb U^N)
=
P_s(\mathbb B^N)
\,
\frac{N-s}{N\,\omega_N}
=
\frac{4}{s}
\,
\frac{\pi^{\frac N2}\,\Gamma(1-s)}{\Gamma\left(\frac{N-s}{2}\right)\,\Gamma\left(1-\frac s2\right)}
\end{equation*}
for all $s\in(0,1)$, which yields~\eqref{eq:fs_c} in virtue of \cref{res:cheeger}.
The characterization of the equality cases in~\eqref{eq:fs_h} follows as in~\cite{FS08}, so we omit the details.
\end{proof}

\subsection{Proof of \texorpdfstring{\cref{res:onen}}{Theorem 1.6}}

For the proof of \cref{res:onen}, we need to establish the following simple preliminary result.

\begin{lemma}
\label{res:equivint}
Let $N=1$ and $\Omega\subsetneq\R$ be the union of $n\in\mathbb N$ (non-empty) disjoint bounded open intervals $\{I_k:k=1,\dots,n\}$.
If $\Omega$ is equivalent to an interval, then
\begin{equation}
\label{eq:equivint}
\mathfrak h_{s,1}(\Omega)
\le 
\frac{\left(\sum_{k=1}^n|I_k|\right)^{1-s}}{\sum_{k=1}^n|I_k|^{1-s}}
\,
\mathfrak h_{s,1}(\R\setminus\{0\})
\end{equation}
for every $s\in(0,1)$.
If, in addition, 
\begin{equation}
\label{eq:equivinteq}
\mathfrak h_{s,1}(\Omega)
\ge
\frac{\mathfrak h_{s,1}(\R\setminus\{0\})}{n^s}
\end{equation}
then all the $I_k$'s have the same measure.  
\end{lemma}

\begin{proof}
Since $\Omega$ is equivalent to an interval, by~\eqref{eq:persseg},  we have 
\begin{equation*}
P_s(\Omega)
=
\frac{4}{s(1-s)}
\,
|\Omega|^{1-s}
=
\frac{4}{s(1-s)}
\,
\left(\sum_{k=1}^n|I_k|\right)^{1-s}
\end{equation*}
while, since the $I_k$'s are pairwise disjoint, by~\eqref{eq:volsseg} we can write
\begin{equation*}
V_{s,\Omega}(\Omega)
=
\sum_{k=1}^nV_{s,\Omega}(I_k)
=
\sum_{k=1}^nV_{s,I_k}(I_k)
=
\frac{2^s}{1-s}
\,
\sum_{k=1}^n|I_k|^{1-s}
\end{equation*}
for all $s\in(0,1)$. Therefore, inequality~\eqref{eq:equivint} follows from \cref{res:cheeger_relax} and~\eqref{eq:fs1}.
Finally, if~\eqref{eq:equivinteq} additionally holds, then from~\eqref{eq:equivint} we deduce that
\begin{equation*}
\frac1n\sum_{k=1}^n|I_k|^{1-s}
\le 
\left(\frac1n\sum_{k=1}^n|I_k|\right)^{1-s}
\end{equation*}
which implies that the $|I_k|$'s are all equal, since $t\mapsto t^{1-s}$ is strictly concave on~$(0,\infty)$.
\end{proof}

\begin{proof}[Proof of \cref{res:onen}]
We split the proof into two parts.

\vspace{1ex}

\textit{Proof of~\eqref{eq:onen}}.
By combining~\eqref{eq:posze} with~\eqref{eq:aceto} in \cref{res:nomega_vol}, we get that
\begin{equation*}
\frac{[u]_{W^{s,p}(\R)}^p}{\displaystyle\int_{\Omega}\frac{|u|^p}{d_\Omega^{sp}}\di x}
\ge 
\frac{[u^\star]_{W^{s,p}(\R)}^p}{\displaystyle n^{sp}
\int_{\R}
\frac{(u^\star)^p}{|x|^{sp}}
\di x}
\ge 
\frac{\mathfrak h_{s,p}\left(\R^N\setminus\{0\}\right)}{n^{sp}}
\end{equation*}
owing to \cref{res:fs}, from which we  deduce~\eqref{eq:onen}.
Moreover, if $p=1$ and $\Omega$ is equivalent to an interval, then \cref{res:equivint} proves that~\eqref{eq:onen} holds as an equality if and only if all the $I_k$'s have the same measure.

\vspace{1ex}

\textit{Proof of~\eqref{eq:rmenz}}.
By \cref{res:spacco,res:cheeger_relax}, for any $R\ge1$, we can estimate
\begin{equation}
\label{eq:caffe}
\mathfrak h_{s,p}((-R,R)\setminus\mathbb Z)
\le 
\mathfrak h_{sp,1}((-R,R)\setminus\mathbb Z)
=
\mathfrak g_{sp}((-R,R)\setminus\mathbb Z).
\end{equation}
Now, since $|\mathbb Z|=0$, we have that
\begin{equation}
\label{eq:latte}
P_{sp}((-R,R)\setminus\mathbb Z)
=
P_{sp}((-R,R))
=
(2R)^{1-sp}\,\frac{4}{sp\,(1-sp)},
\end{equation}
thanks to~\eqref{eq:persseg} (with $sp<1$ in place of $s$), and also
\begin{equation}
\label{eq:macchiato}
\begin{split}
V_{sp,(-R,R)\setminus\mathbb Z}((-R,R)\setminus\mathbb Z)
&\ge 
V_{sp,(-R,R)\setminus\mathbb Z}((-\lfloor R\rfloor,\lfloor R\rfloor))
\\
&=
V_{sp,(-\lfloor R\rfloor,\lfloor R\rfloor)\setminus\mathbb Z}((-\lfloor R\rfloor,\lfloor R\rfloor))
\\
&=
V_{sp,(-\lfloor R\rfloor,\lfloor R\rfloor)}((-\lfloor R\rfloor,\lfloor R\rfloor))
=
\lfloor R\rfloor
\,
\frac{2^{1+sp}}{1-sp}
\end{split}
\end{equation}
thanks to~\eqref{eq:volsseg}. 
Hence, by combining~\eqref{eq:caffe}, \eqref{eq:latte} and~\eqref{eq:macchiato}, we infer that 
\begin{equation*}
\mathfrak h_{s,p}((-R,R)\setminus\mathbb Z)
\le 
\mathfrak g_{sp}((-R,R)\setminus\mathbb Z)
\le
\frac{P_{sp}((-R,R)\setminus\mathbb Z)}{V_{sp,(-R,R)\setminus\mathbb Z}((-R,R)\setminus\mathbb Z)}
=
\frac{R^{1-sp}}{\lfloor R\rfloor}
\,
\frac{4^{1-sp}}{sp}
\end{equation*}
proving~\eqref{eq:rmenz} for any $R\ge1$. 
For the limit case $R=\infty$, we can analogously bound 
\begin{equation*}
\mathfrak h_{s,p}(\R\setminus\mathbb Z)
\le 
\mathfrak h_{sp,1}(\R\setminus\mathbb Z)
=
\mathfrak g_{sp}(\R\setminus\mathbb Z)
\le 
\mathfrak g_{sp}((-m,m)\setminus\mathbb Z)
\le 
\frac{1}{m^{sp}}
\,
\frac{4^{1-sp}}{sp}
\end{equation*}
for all $m\in\mathbb N$, because $V_{s,\R\setminus\mathbb Z}(E)=V_{s,(-m,m)\setminus\mathbb Z}(E)$ for any $E\subset(-m,m)\setminus\mathbb Z$.
We hence get that $\mathfrak h_{s,p}(\R\setminus\mathbb Z)
=0$, concluding the proof. 
\end{proof}

\subsection{Proof of \texorpdfstring{\cref{res:nsegments}}{Theorem 1.7}}

We now turn to the proof of  \cref{res:nsegments}---actually, of the following more precise result.
Here and below, given an non-empty set $A\subseteq\R$ and $\delta\in(0,\infty)$, we let $A^\delta=\{x\in\R:\operatorname{dist}(x,A)<\delta\}$ be the \emph{open $\delta$-neighborhood} of~$A$.

\begin{theorem}
\label{res:nsegments_delta}
Let $\Omega\subsetneq\R$, $n\in\mathbb N$ and $\ell,\delta\in(0,\infty)$ be as in \cref{res:nsegments}.
For $s\in(0,1)$ and $p\in[1,\infty)$ such that $sp\le1$, it holds that 
\begin{equation*}
[u]_{W^{s,p}(\Omega^\delta)}^p
\ge 
\frac{2}{sp}
\left(
1-
\left(\frac{\ell}{\ell+\delta}\right)^{sp}
\,
\right)
\int_{\Omega}
\frac{|u|^p}{d_\Omega^{sp}}\di x
\end{equation*}
for all $u\in C^\infty_0(\Omega)$.
\end{theorem}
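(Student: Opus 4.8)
The strategy is to discard everything in $[u]^p_{W^{s,p}(\Omega^\delta)}$ except the interaction between $u$, which is supported in $\Omega$, and the two $\delta$-wide ``collars'' sitting just outside each component interval of $\Omega$. Write $\Omega=\bigcup_{i=1}^n I_i$ with $I_i=(a_i,b_i)$, so that $|I_i|=b_i-a_i\le\ell$ and $\operatorname{dist}(I_i,I_j)\ge\delta$ for $i\ne j$, and set $L_i=(a_i-\delta,a_i)$ and $R_i=(b_i,b_i+\delta)$. Since $u\in C^\infty_0(\Omega)$ vanishes on $\R\setminus\Omega\supseteq\Omega^\delta\setminus\Omega$, I would split the double integral defining $[u]^p_{W^{s,p}(\Omega^\delta)}$ over $\Omega^\delta\times\Omega^\delta$ into the four pieces cut out by $\Omega$ and by $\Omega^\delta\setminus\Omega$: the $(\Omega^\delta\setminus\Omega)\times(\Omega^\delta\setminus\Omega)$ piece vanishes, the $\Omega\times\Omega$ piece is nonnegative and is dropped, and the two mixed pieces are equal by the symmetry of the kernel, so that
\begin{equation*}
[u]^p_{W^{s,p}(\Omega^\delta)}
\ge
2\int_\Omega|u(x)|^p\left(\int_{\Omega^\delta\setminus\Omega}\frac{\di y}{|x-y|^{1+sp}}\right)\di x.
\end{equation*}

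The geometric heart of the argument is the claim that $L_i\cup R_i\subseteq\Omega^\delta\setminus\Omega$ and that $d_\Omega(x)=d_{I_i}(x)=\min\{x-a_i,b_i-x\}$ for every $x\in I_i$. For the first point, each $y\in L_i$ satisfies $\operatorname{dist}(y,I_i)=a_i-y<\delta$, hence $y\in\Omega^\delta$; moreover $L_i\cap I_i=\emptyset$, while the separation hypothesis forces every other component $I_j$ to lie at distance $\ge\delta$ from $I_i$ and thus to miss $L_i$ altogether, so $L_i\cap\Omega=\emptyset$; the same reasoning applies to $R_i$. For the second point, the closest point of $\R\setminus\Omega$ to a point $x\in I_i$ is one of the endpoints $a_i,b_i$ — no other component can be closer, again by the separation hypothesis — which gives the stated formula for $d_\Omega$ on $I_i$.

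With these facts in hand, for $x\in I_i$ I would bound the inner integral from below by restricting $y$ to $L_i\cup R_i$ and evaluating the two resulting one-dimensional integrals, each of the elementary type $\int_\tau^{\tau+\delta}t^{-1-sp}\di t=\tfrac1{sp}\big(\tau^{-sp}-(\tau+\delta)^{-sp}\big)$, with $\tau=x-a_i$ and $\tau=b_i-x$ respectively. Assuming without loss of generality that $d_\Omega(x)=x-a_i$ and discarding the nonnegative contribution of $R_i$, this gives
\begin{equation*}
\int_{\Omega^\delta\setminus\Omega}\frac{\di y}{|x-y|^{1+sp}}
\ge
\frac{d_\Omega(x)^{-sp}}{sp}\left(1-\left(\frac{d_\Omega(x)}{d_\Omega(x)+\delta}\right)^{sp}\right)
\ge
\frac{d_\Omega(x)^{-sp}}{sp}\left(1-\left(\frac{\ell}{\ell+\delta}\right)^{sp}\right),
\end{equation*}
where the last step uses that $\tau\mapsto1-\big(\tau/(\tau+\delta)\big)^{sp}$ is decreasing and that $d_\Omega(x)\le|I_i|\le\ell$. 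Inserting this into the previous display yields exactly the claimed inequality. There is no real analytic difficulty: the integrals are explicit and convergent for all $s\in(0,1)$ and $p\in[1,\infty)$, so the only step that requires care is the slightly fussy but elementary bookkeeping of the second paragraph, which is precisely where the hypothesis $\operatorname{dist}(I_i,I_j)\ge\delta$ enters. (One could note that sharpening $d_\Omega(x)\le|I_i|\le\ell$ to $d_\Omega(x)\le|I_i|/2\le\ell/2$ improves the constant to $\tfrac2{sp}\big(1-(\tfrac{\ell}{\ell+2\delta})^{sp}\big)$.)
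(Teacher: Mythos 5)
Your argument is correct, and it takes a genuinely different and leaner route than the paper's. The paper first proves a single-interval estimate (its Proposition on $I^\delta$) by a nonlinear ground-state/Picone device: it tests the ``supersolution'' $\mathbf 1_I$ against $\varphi=|u|^p/(\varepsilon+\mathbf 1_I)^{p-1}$, invokes the discrete Picone inequality of~\cite{BBZ23}*{Lem.~2.4} to dominate $J_p(\mathbf 1_I(x)-\mathbf 1_I(y))(\varphi(x)-\varphi(y))$ by $|u(x)-u(y)|^p$, lets $\varepsilon\to0^+$, and only then sums the resulting bound over the $n$ pairwise disjoint enlarged intervals $I_i^\delta$. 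You bypass the Picone machinery entirely by exploiting that $u\equiv 0$ on $\Omega^\delta\setminus\Omega$: the purely exterior block of the double integral vanishes, the interior block is dropped as nonnegative, and the two symmetric mixed blocks already give $2\int_\Omega|u(x)|^p\int_{\Omega^\delta\setminus\Omega}|x-y|^{-1-sp}\di y\,\di x$, so no auxiliary single-interval lemma is required. At the core, both arguments rest on the same elementary collar integral $\int_\tau^{\tau+\delta}t^{-1-sp}\di t$ and on the separation hypothesis $\operatorname{dist}(I_i,I_j)\ge\delta$, used exactly as you describe to guarantee $L_i\cup R_i\subseteq\Omega^\delta\setminus\Omega$ and $d_\Omega=d_{I_i}$ on $I_i$. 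The paper's route aims at a sharper per-interval constant $2^{2-sp}/(sp)$; yours delivers exactly the stated $2/(sp)$ but is shorter, self-contained, and makes the mechanism transparent. Your parenthetical remark, replacing $d_\Omega(x)\le\ell$ by $d_\Omega(x)\le\ell/2$ to get $\tfrac{2}{sp}\bigl(1-\bigl(\tfrac{\ell}{\ell+2\delta}\bigr)^{sp}\bigr)$, is a genuine small improvement worth recording.
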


To prove \cref{res:nsegments_delta}, we need the following preliminary result; that is, \cref{res:nsegments_delta} in the case of a single interval.

\begin{proposition}
\label{res:intfat}Let $I\subsetneq\R$ be a non-empty bounded open interval with $|I|=\ell\in(0,\infty)$.
For $s\in(0,1)$ and $p\in[1,\infty)$  such that $sp\le1$, it holds that
\begin{equation*}
[u]_{W^{s,p}(I^\delta)}^p
\ge
\frac{2^{2-sp}}{sp}
\left(1-\left(\frac{\ell}{\ell+\delta}\right)^{sp}\,\right)
\int_I\frac{|u|^p}{d_I^{sp}}\di x
\end{equation*}
for any $\delta\in(0,\infty)$ and $u\in C^\infty_0(I)$.
\end{proposition}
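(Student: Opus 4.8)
The plan is to pass to superlevel sets and reduce everything to an explicit geometric inequality in one real variable. First I would normalise: translating, one may assume $I=(-R,R)$, so that $I^\delta=(-R-\delta,R+\delta)$ is symmetric about the origin and $\ell=2R$ (the problem is translation invariant). Next, for $u\in C^\infty_0(I)$ with $u\ge0$ (which is not restrictive), I would write out the identity obtained by removing from the whole-space seminorm the interactions with $\R\setminus I^\delta$:
\[
[u]_{W^{s,p}(I^\delta)}^p=[u]_{W^{s,p}(\R)}^p-\frac{2}{sp}\int_I u(x)^p\Bigl[\bigl(d_I^-(x)+\delta\bigr)^{-sp}+\bigl(d_I^+(x)+\delta\bigr)^{-sp}\Bigr]\di x,
\]
where $d_I^-,d_I^+$ are the distances from $x$ to the two endpoints of $I$ (so $d_I^-+d_I^+\equiv\ell$ and $d_I=\min(d_I^-,d_I^+)$).

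For $p=1$ I would then invoke the fractional coarea formula \eqref{eq:fracoarea} (valid for the $W^{s,1}$-seminorm on any open set) and Cavalieri's principle on the right-hand side, reducing the claim to the set inequality
\[
[\mathbf 1_E]_{W^{s,1}(I^\delta)}\ge\frac{2^{2-s}}{s}\Bigl(1-\bigl(\tfrac{\ell}{\ell+\delta}\bigr)^{s}\Bigr)\,V_{s,I}(E)\qquad\text{for every }E\Subset I\text{ with }|E|>0.
\]
Here $[\mathbf 1_E]_{W^{s,1}(I^\delta)}=P_s(E)-\frac{2}{s}\int_E[(d_I^-+\delta)^{-s}+(d_I^++\delta)^{-s}]\di x$, so the inequality rewrites as $P_s(E)\ge\int_E\phi\,\di x$ for the explicit weight $\phi(x)=\frac{2^{2-s}}{s}(1-(\tfrac{\ell}{\ell+\delta})^{s})\,d_I(x)^{-s}+\frac{2}{s}[(d_I^-(x)+\delta)^{-s}+(d_I^+(x)+\delta)^{-s}]$. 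I would bound $P_s(E)$ from below by the fractional isoperimetric inequality \eqref{eq:fraciso}, namely $P_s(E)\ge P_s((0,1))\,|E|^{1-s}$, and $\int_E\phi$ from above by the Hardy--Littlewood inequality \eqref{eq:hl} together with \cref{res:rear} (which identifies the decreasing rearrangement of $d_I^{-s}$ with $|x|^{-s}$) — being careful to keep the shape of the rearranged weight rather than merely its supremum, since a crude bound is lossy near the centre of $I$. This brings matters to a single inequality between powers of $\ell$, $\delta$ and $|E|$, to be verified by elementary calculus (the extremal configuration is $E\simeq I$, where the two sides must balance, the balancing being equivalent after clearing denominators to $\ell+\delta\ge\delta$).

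The case $1<p<1/s$ is, I expect, the main obstacle: the coarea reduction is unavailable, so one cannot pass to superlevel sets. Here the natural attempt is to symmetrise $u$ directly on $\R$ via \eqref{eq:posze} and Hardy--Littlewood, reducing to a symmetric non-increasing profile supported in $\overline I$ and to a fractional Hardy inequality with a singularity at the centre of $I$; but one must resist bounding the collar-correction term crudely, because the required constant $\frac{2^{2-sp}}{sp}(1-(\tfrac{\ell}{\ell+\delta})^{sp})$ is essentially optimal — in the limit $\delta\to\infty$ it equals $\mathfrak h_{sp,1}(I)$, which by \cref{res:spacco} bounds $\mathfrak h_{s,p}(I)$ from above, so the argument has almost no slack. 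A viable route is to establish the reduced centre-weighted inequality for symmetric non-increasing functions by a one-dimensional argument — splitting the seminorm over $I^\delta$ into the contributions of same-sign and opposite-sign pairs of variables, the latter being the one that carries the singularity at the centre — and then to transfer it back to the distance weight and to finite $\delta$; I expect the precise bookkeeping in this last step to be the delicate part.
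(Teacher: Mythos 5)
Your proposal is a sketch rather than a proof, and it diverges substantially from the paper's argument in both strategy and completeness; moreover, you yourself flag that the $p>1$ case is unfinished. Let me be concrete about where it breaks.

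For $p=1$, the reduction via the localized coarea formula $[u]_{W^{s,1}(I^\delta)}=\int_0^\infty[\mathbf 1_{\{u>t\}}]_{W^{s,1}(I^\delta)}\di t$ and Cavalieri is sound, and your opening identity for $[u]_{W^{s,p}(I^\delta)}^p$ is correct. But the plan to close the resulting set inequality $P_s(E)\ge\int_E\phi\di x$ by combining the fractional isoperimetric inequality with Hardy--Littlewood is not worked out, and it is far from a ``single inequality between powers of $\ell$, $\delta$ and $|E|$''. The weight $\phi$ is a sum of $d_I^{-s}$ (blowing up at \emph{both} endpoints) and two bounded terms; it is radially \emph{increasing} on $I$, so $\phi^\star$ is not a power law, its shape genuinely depends on $\ell/\delta$, and $\int_{E^\star}\phi^\star$ does not reduce to a clean expression in $|E|$. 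You would need a careful case analysis that is not present.

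For $1<p<1/s$, the gap is essential. After Pólya--Szegő and Hardy--Littlewood, you would need a lower bound of the form $[v]_{W^{s,p}(\R)}^p-\int v^p\psi^\star\ge\frac{2^{2-sp}}{sp}(1-(\tfrac{\ell}{\ell+\delta})^{sp})\int v^p|x|^{-sp}$ for symmetric non-increasing $v$ supported in $\overline I$, where $\psi$ is the collar correction. The crude route---invoking the sharp radial Hardy inequality of \cref{res:fs} for the first term and $\|\psi^\star\|_\infty\,\|v\|_p^p\le\|\psi^\star\|_\infty\,R^{sp}\int v^p|x|^{-sp}$ for the second---comes out strictly negative (after simplification it amounts to requiring $\delta\ge\ell+\delta$), so, as you anticipated, there is ``no slack'' and the approach fails as sketched. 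Your tentative fix (splitting the seminorm over same-sign and opposite-sign pairs of variables) is too vague to assess.

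The paper's proof takes a fundamentally different and more direct route that handles all $p\in[1,\infty)$ with $sp\le1$ at once and uses neither rearrangement nor isoperimetry. It tests the nonlocal $p$-Dirichlet form against the ``ground state'' $\mathbf 1_I$: for nonnegative Lipschitz $\varphi$ supported in $I$ one has, with $J_p(t)=|t|^{p-2}t$,
\begin{equation*}
\iint_{I^\delta\times I^\delta}\frac{J_p(\mathbf 1_I(x)-\mathbf 1_I(y))}{|x-y|^{1+sp}}\bigl(\varphi(x)-\varphi(y)\bigr)\di x\di y
=2\int_I\varphi(x)\int_{I^\delta\setminus I}\frac{\di y}{|x-y|^{1+sp}}\di x,
\end{equation*}
and the inner integral is computed explicitly and bounded below by a constant times $d_I(x)^{-sp}$ using $(t+\delta)^{-sp}\le(\tfrac{\ell}{\ell+\delta})^{sp}t^{-sp}$ for $0<t\le\ell$. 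One then chooses $\varphi=|u|^p(\varepsilon+\mathbf 1_I)^{1-p}$ and applies the discrete Picone-type inequality of \cite{BBZ23}*{Lem.~2.4} to bound the left-hand side above by $[u]_{W^{s,p}(I^\delta)}^p$. This is the key ingredient your proposal is missing for $p>1$. (Incidentally, the paper's final estimate $(x-a)^{sp}+(b-x)^{sp}\ge 2^{1-sp}(b-a)^{sp}$ actually goes the wrong way for $sp\le 1$ by concavity of $t\mapsto t^{sp}$; the argument still yields the inequality with $2/(sp)$ in place of $2^{2-sp}/(sp)$, using only $d_I^{-sp}\le(x-a)^{-sp}+(b-x)^{-sp}$.)
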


\begin{proof}
For convenience, let $a,b\in\R$, $a<b$, be such that $I=(a,b)$.
We let $\varphi\in\operatorname{Lip}(\R)$ be a Lipschitz function such that $\operatorname{supp}\varphi\subseteq I$ and $\varphi\ge0$.
For any $\varepsilon>0$, we define 
\begin{equation}
\label{eq:mango}
\bigtriangleup_\varepsilon=\{(x,y)\in\R^2 : |x-y|\le\varepsilon\},
\end{equation}
and we let $J_p\colon\R\to\R$ be given by 
\begin{equation*}
J_p(t)
=
\begin{cases}
|t|^{p-2}\,t 
& \text{for}\ p\in(1,\infty),
\\[1ex]
\operatorname{sgn}(t)
& \text{for}\ p=1,
\end{cases}
\end{equation*}
for all $t\ne0$, and $J_p(0)=0$.
With this notation in force, we observe that 
\begin{equation*}
\begin{split}
\limsup_{\varepsilon\to0^+}
\bigg|
&
\iint_{(I^\delta\times I^\delta)\cap\bigtriangleup_\varepsilon}
\frac{J_p(\mathbf 1_I(x)-\mathbf 1_I(y))}{|x-y|^{1+sp}}\,(\varphi(x)-\varphi(y))\di x\di y
\,
\bigg|
\\
&\le 
\limsup_{\varepsilon\to0^+}
\iint_{(I^\delta\times I^\delta)\cap\bigtriangleup_\varepsilon}
\frac{|\mathbf 1_I(x)-\mathbf 1_I(y)|^{p-1}}{|x-y|^{1+sp}}\,|\varphi(x)-\varphi(y)|\di x\di y
\\
&\le 
\limsup_{\varepsilon\to0^+}
\iint_{\bigtriangleup_\varepsilon}
\frac{|\varphi(x)-\varphi(y)|}{|x-y|^{1+sp}}\,\di x\di y=0
\end{split}
\end{equation*}
by the Dominated Convergence Theorem, since $\varphi\in W^{sp,1}(\R)$.
Therefore, we can write 
\begin{equation}
\label{eq:kiwi}
\begin{split}
\iint_{I^\delta\times I^\delta}
&
\frac{J_p(\mathbf 1_I(x)-\mathbf 1_I(y))}{|x-y|^{1+sp}}\,(\varphi(x)-\varphi(y))\di x\di y
\\
&=
\lim_{\varepsilon\to0^+}
\iint_{(I^\delta\times I^\delta)\setminus\bigtriangleup_\varepsilon}
\frac{J_p(\mathbf 1_I(x)-\mathbf 1_I(y))}{|x-y|^{1+sp}}\,(\varphi(x)-\varphi(y))\di x\di y.
\end{split}
\end{equation}
Now, given $\varepsilon>0$, we can rewrite
\begin{equation}
\label{eq:ananas}
\begin{split}
\iint_{(I^\delta\times I^\delta)\setminus\bigtriangleup_\varepsilon}
&
\frac{J_p(\mathbf 1_I(x)-\mathbf 1_I(y))}{|x-y|^{1+sp}}\,(\varphi(x)-\varphi(y))\di x\di y
\\
&=
2\iint_{(I^\delta\times I^\delta)\setminus\bigtriangleup_\varepsilon}
\frac{J_p(\mathbf 1_I(x)-\mathbf 1_I(y))}{|x-y|^{1+sp}}\,\varphi(x)\di x\di y
\\
&=
2\int_I\varphi(x)
\int_{\{y\in I^\delta\setminus I\, :\, |y-x|>\varepsilon\}}\frac{\di y}{|y-x|^{1+sp}}
\di x.
\end{split}
\end{equation}
Observing that
\begin{equation*}
\frac{1}{(x-a+\delta)^{sp}}
\le 
\frac{C_\delta^{sp}}{(x-a)^{sp}}
\quad
\text{and}
\quad
\frac{1}{(b+\delta-x)^{sp}}
\le 
\frac{C_\delta^{sp}}{(b-x)^{sp}}
\end{equation*}
for all $x\in I$,
where
\begin{equation*}
C_\delta=\frac{b-a}{b-a+\delta}\in(0,1),
\end{equation*}
we can estimate
\begin{equation}
\label{eq:mandarino}
\begin{split}
\lim_{\varepsilon\to0^+}
&
\int_{\{y\in I^\delta\setminus I : |y-x|>\varepsilon\}}\frac{\di y}{|y-x|^{1+sp}}
\\
&=
\frac{1}{sp}
\left(
\frac{1}{(x-a)^{sp}}
-
\frac{1}{(x-a+\delta)^{sp}}
+\frac{1}{(b-x)^{sp}}
-
\frac{1}{(b+\delta-x)^{sp}}
\right)
\\
&\ge 
\frac{(1-C_\delta^{sp})}{sp}
\left(
\frac{1}{(x-a)^{sp}}
+
\frac{1}{(b-x)^{sp}}
\right)
\end{split}
\end{equation}
for all $x\in I$.
Since $sp\le1$, for each $x\in I$, we further have that
\begin{equation}
\label{eq:mandarancio}
\frac{1}{(x-a)^{sp}}
+
\frac{1}{(b-x)^{sp}}
=
\frac{(x-a)^{sp}+(b-x)^{sp}}{(x-a)^{sp}\,(b-x)^{sp}}
\ge
\frac{2^{1-sp}\,(b-a)^{sp}}{(x-a)^{sp}\,(b-x)^{sp}}
\ge 
\frac{2^{1-sp}}{d_I(x)^{sp}} 
\end{equation}
and thus, owing to~\eqref{eq:kiwi}, \eqref{eq:ananas}, Fatou's Lemma (recall that $\varphi\ge0$), \eqref{eq:mandarino} and~\eqref{eq:mandarancio}, 
\begin{equation}
\label{eq:perospino}
\begin{split}
\iint_{I^\delta\times I^\delta}
&
\frac{J_p(\mathbf 1_I(x)-\mathbf 1_I(y))}{|x-y|^{1+sp}}\,(\varphi(x)-\varphi(y))\di x\di y
\\
&=
\lim_{\varepsilon\to0^+}
\iint_{(I^\delta\times I^\delta)\setminus\bigtriangleup_\varepsilon}
\frac{J_p(\mathbf 1_I(x)-\mathbf 1_I(y))}{|x-y|^{1+sp}}\,(\varphi(x)-\varphi(y))\di x\di y
\\
&=
\lim_{\varepsilon\to0^+}
2\int_I\varphi(x)
\int_{\{y\in I^\delta\setminus I\, :\, |y-x|>\varepsilon\}}\frac{\di y}{|y-x|^{1+sp}}
\di x
\ge 
\frac{2^{2-sp}\,(1-C_\delta^{sp})}{sp}
\int_I\frac{\varphi}{d_I^{sp}}\di x.
\end{split}
\end{equation}
We now choose the Lipschitz function 
\begin{equation*}
\varphi=\frac{|u|^p}{(\varepsilon+\mathbf 1_I)^{p-1}},
\end{equation*}
where $u\in C^\infty_0(I)$.
Noticing that, by~\cite{BBZ23}*{Lem.~2.4},
\begin{equation*}
J_p(\mathbf 1_I(x)-\mathbf 1_I(y))
\left(
\frac{|u(x)|^p}{(\varepsilon+\mathbf 1_I(x))^{p-1}}
-
\frac{|u(y)|^p}{(\varepsilon+\mathbf 1_I(y))^{p-1}}
\right)
\le
|u(x)-u(y)|^p
\end{equation*}
for all $x,y\in I^\delta$ and $\varepsilon>0$, owing to~\eqref{eq:perospino}, we conclude that
\begin{equation*}
\begin{split}
[u]_{W^{s,p}(I^\delta)}^p
&\ge 
\liminf_{\varepsilon\to0^+}
\iint_{I^\delta\times I^\delta}
\frac{J_p(\mathbf 1_I(x)-\mathbf 1_I(y))}{|x-y|^{1+sp}}
\left(
\frac{|u(x)|^p}{(\varepsilon+\mathbf 1_I(x))^{p-1}}
-
\frac{|u(y)|^p}{(\varepsilon+\mathbf 1_I(y))^{p-1}}
\right)
\di x\di y
\\
&\ge
\liminf_{\varepsilon\to0^+}
\frac{2^{2-sp}\,(1-C_\delta^{sp})}{sp}
\int_I\frac{|u|^p}{d_I^{sp}(\varepsilon+\mathbf 1_I)^{p-1}}\di x
\ge
\frac{2^{2-sp}\,(1-C_\delta^{sp})}{sp}
\int_I\frac{|u|^p}{d_I^{sp}}\di x,
\end{split}
\end{equation*}
and the proof is complete.
\end{proof}

We can now prove \cref{res:nsegments_delta}, and thus \cref{res:nsegments}, by exploiting \cref{res:intfat}.

\begin{proof}[Proof of \cref{res:nsegments_delta}]
Let $I_i$, $i=1,\dots,n$, be the $n$ intervals composing $\Omega$; that is,   $\Omega=\bigcup_{i=1}^nI_i$ with disjoint union.
By assumption, the $\delta$-enlarged intervals $I_i^\delta$, $i=1,\dots,n$, are pairwise disjoint. 
Thus, thanks to \cref{res:intfat}, we can estimate
\begin{equation*}
\begin{split}
[u]_{W^{s,p}(\Omega^\delta)}^p
&\ge 
\sum_{i=1}^n
[u]_{W^{s,p}(I^\delta_i)}^p
\ge
\sum_{i=1}^n
\frac{2^{2-sp}}{sp}
\left(1-\left(\frac{|I_i|}{|I_i|+\delta}\right)^{sp}\,\right)
\int_{I_i}\frac{|u|^p}{d_{I_i}^{sp}}\di x
\\
&\ge
\frac{2^{2-sp}}{sp}
\left(1-\left(\frac{\ell}{\ell+\delta}\right)^{sp}\,\right)
\sum_{i=1}^n
\int_{I_i}\frac{|u|^p}{d_{I_i}^{sp}}\di x
=
\frac{2^{2-sp}}{sp}
\left(1-\left(\frac{\ell}{\ell+\delta}\right)^{sp}\,\right)
\int_{\Omega}\frac{|u|^p}{d_{\Omega}^{sp}}\di x
\end{split}
\end{equation*}
whenever $u\in C^\infty_0(\Omega)$, concluding the proof.
\end{proof}

\begin{remark}
Concerning the constant appearing in \cref{res:nsegments_delta,res:intfat}, we observe that
\begin{equation*}
\lim_{\delta\to0^+}
\frac{2^{2-sp}}{sp}
\left(1-\left(\frac{\ell}{\ell+\delta}\right)^{sp}\,\right)
=0,
\end{equation*} 
in accordance with the fact that the \emph{regional} fractional Hardy inequality~\eqref{eq:frac_regional} cannot hold for $sp\le 1$ on bounded intervals, recall \cref{rem:regional}. 
\end{remark}


\begin{bibdiv}
\begin{biblist}

\bib{AL89}{article}{
   author={Almgren, Frederick J., Jr.},
   author={Lieb, Elliott H.},
   title={Symmetric decreasing rearrangement is sometimes continuous},
   journal={J. Amer. Math. Soc.},
   volume={2},
   date={1989},
   number={4},
   pages={683--773},
   issn={0894-0347},
   review={\MR{1002633}},
   doi={10.2307/1990893},
}

\bib{BS24}{article}{
   author={Bessas, Konstantinos},
   author={Stefani, Giorgio},
   title={Non-local BV functions and a denoising model with $L^1$ fidelity},
   journal={Adv. Calc. Var.},
   volume={18},
   date={2025},
   number={1},
   pages={189--217},
   issn={1864-8258},
   review={\MR{4845992}},
   doi={10.1515/acv-2023-0082},
}
\bib{BBZ23}{article}{
   author={Bianchi, Francesca},
   author={Brasco, Lorenzo},
   author={Zagati, Anna Chiara},
   title={On the sharp Hardy inequality in Sobolev-Slobodecki\u i\ spaces},
   journal={Math. Ann.},
   volume={390},
   date={2024},
   number={1},
   pages={493--555},
   issn={0025-5831},
   review={\MR{4800921}},
   doi={10.1007/s00208-023-02770-z},
}

\bib{BD11}{article}{
   author={Bogdan, Krzysztof},
   author={Dyda, Bart\l omiej},
   title={The best constant in a fractional Hardy inequality},
   journal={Math. Nachr.},
   volume={284},
   date={2011},
   number={5-6},
   pages={629--638},
   issn={0025-584X},
   review={\MR{2663757}},
   doi={10.1002/mana.200810109},
}

\bib{BC18}{article}{
   author={Brasco, Lorenzo},
   author={Cinti, Eleonora},
   title={On fractional Hardy inequalities in convex sets},
   journal={Discrete Contin. Dyn. Syst.},
   volume={38},
   date={2018},
   number={8},
   pages={4019--4040},
   issn={1078-0947},
   review={\MR{3814363}},
   doi={10.3934/dcds.2018175},
}

\bib{BGV21}{article}{
   author={Brasco, Lorenzo},
   author={G\'{o}mez-Castro, David},
   author={V\'{a}zquez, Juan Luis},
   title={Characterisation of homogeneous fractional Sobolev spaces},
   journal={Calc. Var. Partial Differential Equations},
   volume={60},
   date={2021},
   number={2},
   pages={Paper No. 60, 40},
   issn={0944-2669},
   review={\MR{4225499}},
   doi={10.1007/s00526-021-01934-6},
}

\bib{BLP14}{article}{
   author={Brasco, L.},
   author={Lindgren, E.},
   author={Parini, E.},
   title={The fractional Cheeger problem},
   journal={Interfaces Free Bound.},
   volume={16},
   date={2014},
   number={3},
   pages={419--458},
   issn={1463-9963},
   review={\MR{3264796}},
   doi={10.4171/IFB/325},
}

\bib{CP24}{article}{
   author={Cinti, Eleonora},
   author={Prinari, Francesca},
   title={On fractional Hardy-type inequalities in general open sets},
   journal={ESAIM Control Optim. Calc. Var.},
   volume={30},
   date={2024},
   pages={Paper No. 77, 26},
   issn={1292-8119},
   review={\MR{4805757}},
   doi={10.1051/cocv/2024066},
}
\bib{D02}{article}{
   author={D\'{a}vila, J.},
   title={On an open question about functions of bounded variation},
   journal={Calc. Var. Partial Differential Equations},
   volume={15},
   date={2002},
   number={4},
   pages={519--527},
   issn={0944-2669},
   review={\MR{1942130}},
   doi={10.1007/s005260100135},
}

\bib{D04}{article}{
   author={Dyda, Bart\l omiej},
   title={A fractional order Hardy inequality},
   journal={Illinois J. Math.},
   volume={48},
   date={2004},
   number={2},
   pages={575--588},
   issn={0019-2082},
   review={\MR{2085428}},
}

\bib{DK22}{article}{
   author={Dyda, Bart\l omiej},
   author={Kijaczko, Micha\l},
   title={On density of compactly supported smooth functions in fractional
   Sobolev spaces},
   journal={Ann. Mat. Pura Appl. (4)},
   volume={201},
   date={2022},
   number={4},
   pages={1855--1867},
   issn={0373-3114},
   review={\MR{4454384}},
   doi={10.1007/s10231-021-01181-8},
}

\bib{DV14}{article}{
   author={Dyda, Bart\l omiej},
   author={V\"ah\"akangas, Antti V.},
   title={A framework for fractional Hardy inequalities},
   journal={Ann. Acad. Sci. Fenn. Math.},
   volume={39},
   date={2014},
   number={2},
   pages={675--689},
   issn={1239-629X},
   review={\MR{3237044}},
   doi={10.5186/aasfm.2014.3943},
}

\bib{DV15}{article}{
   author={Dyda, Bart\l omiej},
   author={V\"ah\"akangas, Antti V.},
   title={Characterizations for fractional Hardy inequality},
   journal={Adv. Calc. Var.},
   volume={8},
   date={2015},
   number={2},
   pages={173--182},
   issn={1864-8258},
   review={\MR{3331699}},
   doi={10.1515/acv-2013-0019},
}

\bib{EHV14}{article}{
   author={Edmunds, David E.},
   author={Hurri-Syrj\"anen, Ritva},
   author={V\"ah\"akangas, Antti V.},
   title={Fractional Hardy-type inequalities in domains with uniformly fat
   complement},
   journal={Proc. Amer. Math. Soc.},
   volume={142},
   date={2014},
   number={3},
   pages={897--907},
   issn={0002-9939},
   review={\MR{3148524}},
   doi={10.1090/S0002-9939-2013-11818-6},
}

\bib{FMT13}{article}{
   author={Filippas, Stathis},
   author={Moschini, Luisa},
   author={Tertikas, Achilles},
   title={Sharp trace Hardy-Sobolev-Maz'ya inequalities and the fractional
   Laplacian},
   journal={Arch. Ration. Mech. Anal.},
   volume={208},
   date={2013},
   number={1},
   pages={109--161},
   issn={0003-9527},
   review={\MR{3021545}},
   doi={10.1007/s00205-012-0594-4},
}

\bib{FMT18}{article}{
   author={Filippas, Stathis},
   author={Moschini, Luisa},
   author={Tertikas, Achilles},
   title={Correction to: Sharp trace Hardy-Sobolev-Maz'ya inequalities and
   the fractional Laplacian [MR3021545]},
   journal={Arch. Ration. Mech. Anal.},
   volume={229},
   date={2018},
   number={3},
   pages={1281--1286},
   issn={0003-9527},
   review={\MR{3814603}},
   doi={10.1007/s00205-018-1264-y},
}

\bib{FPSS24}{article}{
   author={Franceschi, Valentina},
   author={Pinamonti, Andrea},
   author={Saracco, Giorgio},
   author={Stefani, Giorgio},
   title={The Cheeger problem in abstract measure spaces},
   journal={J. Lond. Math. Soc. (2)},
   volume={109},
   date={2024},
   number={1},
   pages={Paper No. e12840, 55},
   issn={0024-6107},
   review={\MR{4754430}},
   doi={10.1112/jlms.12840},
}

\bib{FS10}{article}{
   author={Frank, Rupert L.},
   author={Seiringer, Robert},
   title={Sharp fractional Hardy inequalities in half-spaces},
   conference={
      title={Around the research of Vladimir Maz'ya. I},
   },
   book={
      series={Int. Math. Ser. (N. Y.)},
      volume={11},
      publisher={Springer, New York},
   },
   isbn={978-1-4419-1340-1},
   isbn={978-5-9018-7341-0},
   date={2010},
   pages={161--167},
   review={\MR{2723817}},
   doi={10.1007/978-1-4419-1341-8\_6},
}

\bib{FS08}{article}{
   author={Frank, Rupert L.},
   author={Seiringer, Robert},
   title={Non-linear ground state representations and sharp Hardy
   inequalities},
   journal={J. Funct. Anal.},
   volume={255},
   date={2008},
   number={12},
   pages={3407--3430},
   issn={0022-1236},
   review={\MR{2469027}},
   doi={10.1016/j.jfa.2008.05.015},
}

\bib{G20}{article}{
   author={Garofalo, Nicola},
   title={On the best constant in the nonlocal isoperimetric inequality of
   Almgren and Lieb},
   journal={Atti Accad. Naz. Lincei Rend. Lincei Mat. Appl.},
   volume={31},
   date={2020},
   number={2},
   pages={465--470},
   issn={1120-6330},
   review={\MR{4120279}},
   doi={10.4171/rlm/900},
}

\bib{H20}{article}{
   author={Hardy, G. H.},
   title={Note on a theorem of Hilbert},
   journal={Math. Z.},
   volume={6},
   date={1920},
   number={3-4},
   pages={314--317},
   issn={0025-5874},
   review={\MR{1544414}},
   doi={10.1007/BF01199965},
}

\bib{H25}{article}{
   author={Hardy, G. H.},
   title={An inequality between integrals},
   journal={Messenger of Math.},
   volume={54},
   date={1925},
   pages={150--156},
}

\bib{L16}{article}{
   author={Larson, Simon},
   title={A bound for the perimeter of inner parallel bodies},
   journal={J. Funct. Anal.},
   volume={271},
   date={2016},
   number={3},
   pages={610--619},
   issn={0022-1236},
   review={\MR{3506959}},
   doi={10.1016/j.jfa.2016.02.022},
}

\bib{L20}{article}{
   author={Larson, Simon},
   title={Corrigendum to ``A bound for the perimeter of inner parallel
   bodies'' [J. Funct. Anal. 271 (3) (2016) 610--619]},
   journal={J. Funct. Anal.},
   volume={279},
   date={2020},
   number={5},
   pages={108574, 2},
   issn={0022-1236},
   review={\MR{4097280}},
   doi={10.1016/j.jfa.2020.108574},
}

\bib{L23}{book}{
   author={Leoni, Giovanni},
   title={A first course in fractional Sobolev spaces},
   series={Graduate Studies in Mathematics},
   volume={229},
   publisher={American Mathematical Society, Providence, RI},
   date={2023},
   pages={xv+586},
   isbn={[9781470468989]},
   isbn={[9781470472535]},
   isbn={[9781470472528]},
   review={\MR{4567945}},
   doi={10.1090/gsm/229},
}

\bib{LL01}{book}{
   author={Lieb, Elliott H.},
   author={Loss, Michael},
   title={Analysis},
   series={Graduate Studies in Mathematics},
   volume={14},
   edition={2},
   publisher={American Mathematical Society, Providence, RI},
   date={2001},
   pages={xxii+346},
   isbn={0-8218-2783-9},
   review={\MR{1817225}},
   doi={10.1090/gsm/014},
}

\bib{L19}{article}{
   author={Lombardini, Luca},
   title={Fractional perimeters from a fractal perspective},
   journal={Adv. Nonlinear Stud.},
   volume={19},
   date={2019},
   number={1},
   pages={165--196},
   issn={1536-1365},
   review={\MR{3912427}},
   doi={10.1515/ans-2018-2016},
}

\bib{LS10}{article}{
   author={Loss, Michael},
   author={Sloane, Craig},
   title={Hardy inequalities for fractional integrals on general domains},
   journal={J. Funct. Anal.},
   volume={259},
   date={2010},
   number={6},
   pages={1369--1379},
   issn={0022-1236},
   review={\MR{2659764}},
   doi={10.1016/j.jfa.2010.05.001},
}

\bib{M12}{book}{
   author={Maggi, Francesco},
   title={Sets of finite perimeter and geometric variational problems},
   series={Cambridge Studies in Advanced Mathematics},
   volume={135},
   publisher={Cambridge University Press, Cambridge},
   date={2012},
   pages={xx+454},
   isbn={978-1-107-02103-7},
   review={\MR{2976521}},
   doi={10.1017/CBO9781139108133},
}

\bib{MMP98}{article}{
   author={Marcus, Moshe},
   author={Mizel, Victor J.},
   author={Pinchover, Yehuda},
   title={On the best constant for Hardy's inequality in $\mathbf R^n$},
   journal={Trans. Amer. Math. Soc.},
   volume={350},
   date={1998},
   number={8},
   pages={3237--3255},
   issn={0002-9947},
   review={\MR{1458330}},
   doi={10.1090/S0002-9947-98-02122-9},
}

\bib{MS97}{article}{
   author={Matskewich, Tanya},
   author={Sobolevskii, Pavel E.},
   title={The best possible constant in generalized Hardy's inequality for
   convex domain in ${\mathbf R}^n$},
   journal={Nonlinear Anal.},
   volume={28},
   date={1997},
   number={9},
   pages={1601--1610},
   issn={0362-546X},
   review={\MR{1431208}},
   doi={10.1016/S0362-546X(96)00004-1},
}

\bib{M11}{book}{
   author={Maz'ya, Vladimir},
   title={Sobolev spaces with applications to elliptic partial differential
   equations},
   series={Grundlehren der mathematischen Wissenschaften [Fundamental
   Principles of Mathematical Sciences]},
   volume={342},
   edition={augmented edition},
   publisher={Springer, Heidelberg},
   date={2011},
   pages={xxviii+866},
   isbn={978-3-642-15563-5},
   review={\MR{2777530}},
   doi={10.1007/978-3-642-15564-2},
}

\bib{MS02}{article}{
   author={Maz'ya, V.},
   author={Shaposhnikova, T.},
   title={On the Bourgain, Brezis, and Mironescu theorem concerning limiting
   embeddings of fractional Sobolev spaces},
   journal={J. Funct. Anal.},
   volume={195},
   date={2002},
   number={2},
   pages={230--238},
   issn={0022-1236},
   review={\MR{1940355}},
   doi={10.1006/jfan.2002.3955},
}

\bib{MS03}{article}{
   author={Maz'ya, V.},
   author={Shaposhnikova, T.},
   title={Erratum to: ``On the Bourgain, Brezis and Mironescu theorem
   concerning limiting embeddings of fractional Sobolev spaces'' [J. Funct.
   Anal. {\bf 195} (2002), no. 2, 230--238; MR1940355 (2003j:46051)]},
   journal={J. Funct. Anal.},
   volume={201},
   date={2003},
   number={1},
   pages={298--300},
   issn={0022-1236},
   review={\MR{1986163}},
   doi={10.1016/S0022-1236(03)00002-8},
}

\bib{OK90}{book}{
   author={Opic, B.},
   author={Kufner, A.},
   title={Hardy-type inequalities},
   series={Pitman Research Notes in Mathematics Series},
   volume={219},
   publisher={Longman Scientific \& Technical, Harlow},
   date={1990},
   pages={xii+333},
   isbn={0-582-05198-3},
   review={\MR{1069756}},
}

\bib{S23}{article}{
   author={Sk, Firoj},
   title={Characterization of fractional Sobolev-Poincar\'e and (localized) Hardy inequalities},
   journal={J. Geom. Anal.},
   volume={33},
   date={2023},
   number={7},
   pages={Paper No. 223, 20},
   issn={1050-6926},
   review={\MR{4581748}},
   doi={10.1007/s12220-023-01293-y},
}

\bib{V91}{article}{
   author={Visintin, Augusto},
   title={Generalized coarea formula and fractal sets},
   journal={Japan J. Indust. Appl. Math.},
   volume={8},
   date={1991},
   number={2},
   pages={175--201},
   issn={0916-7005},
   review={\MR{1111612}},
   doi={10.1007/BF03167679},
}

\end{biblist}
\end{bibdiv}

\end{document}